\newcommand{\sy}{\boldsymbol{\Psi}}
\newcommand{\py}{\boldsymbol{\Phi}}
\newcommand{\T}{\mathbb{T}^3}
\newcommand{\N}{\mathbb{N}}									%basic sets
\newcommand{\R}{\mathbb{R}}
\newcommand{\vertiii}[1]{{\left\vert\kern-0.25ex\left\vert\kern-0.25ex\left\vert #1 
    \right\vert\kern-0.25ex\right\vert\kern-0.25ex\right\vert}}
\newcommand{\inner}[2]{\left\langle #1, #2 \right\rangle}
\newcommand{\infsum}[1]{\sum_{#1=1}^\infty}					%sum from 1 to infinity
\DeclarePairedDelimiter\abs{\lvert}{\rvert}					%absolute value
\newcommand{\norm}[1]{\left\Vert #1 \right\Vert}				%norm
\newcommand{\nl}[2]{\mathcal{L}_{#1}#2}
\newcommand{\Gi}{\mathcal{G}_{i}}
\newcommand{\proj}{\mathcal{P}}
\newcommand{\E}{\mathbbm{E}}
\newtheorem{theorem}{Theorem}[section]
\newtheorem{lemma}[theorem]{Lemma}
\newtheorem{proposition}[theorem]{Proposition}
\newtheorem*{remark}{Remark}
\newtheorem{definition}[theorem]{Definition}
\begin{document}
	\title{Closed Estimates of Leray Projected Transport Noise and Strong Solutions of the Stochastic Euler Equations}
	\author{Daniel Goodair\footnote{\'{E}cole Polytechnique F\'{e}d\'{e}rale de Lausanne, daniel.goodair@epfl.ch}}
	\date{\today} 
	\maketitle
\setcitestyle{numbers}	
\thispagestyle{empty}
\begin{abstract}
We consider the incompressible Euler and Navier-Stokes equations on the three dimensional torus, in velocity form, perturbed by a transport or transport-stretching Stratonovich noise. Closed control of the noise contributions in energy estimates are demonstrated, for any positive integer ordered Sobolev Space and the equivalent Stokes Space; difficulty arises due to the presence of the Leray Projector disrupting cancellation of the top order derivative. This is particularly pertinent in the case of a transport noise without stretching, where the vorticity form cannot be used. As a consequence we obtain, for the first time, the existence of a local strong solution to the corresponding stochastic Euler equation. Furthermore, smooth solutions are shown to exist until blow-up in $L^1\left([0,T];W^{1,\infty}\right)$. % of $\int_0^{\cdot}\norm{u_s}_{W^{1,\infty}}ds$.
\end{abstract}
	
\tableofcontents
\textcolor{white}{Hello}
\thispagestyle{empty}

\newpage

\setcounter{page}{1}

\section{Introduction} \label{section introduction}

We are concerned with the incompressible Euler and Navier-Stokes equations on the three dimensional torus, in velocity form, perturbed by a transport or transport-stretching Stratonovich noise.  The stochastic Navier-Stokes equation reads as
\begin{equation} \label{general strati}
    u_t = u_0 - \int_0^t\mathcal{L}_{u_s}u_s\ ds + \nu\int_0^t \Delta u_s\, ds + \int_0^t \mathcal{G}u_s \circ d\mathcal{W}_s - \nabla \rho_t 
\end{equation}
where $\mathcal{L}$ denotes the usual nonlinear term, $\nu > 0$ the viscosity, $\rho$ the pressure, $\mathcal{W}$ is a Cylindrical Brownian Motion and $\mathcal{G}$ represents a transport or transport-stretching noise. The relevant functional analytic and stochastic preliminaries are given in Subsections \ref{subs functional anal} and \ref{subs stoch prelims}. When $\nu$ is formally set to zero we retrieve the stochastic Euler equation. Analysis of the equation (\ref{general strato}) classically hinges upon two manipulations: projection onto the divergence-free and mean-zero subspace by the Leray Projector $\mathcal{P}$, and conversion from Stratonovich to It\^{o} form. To eliminate the semi-martingale pressure before a conversion to It\^{o} form, where the pressure would appear in the cross-variation, we first project the equation to arrive at
\begin{equation} \label{general strato}
    u_t = u_0 - \int_0^t\mathcal{P}\mathcal{L}_{u_s}u_s\ ds - \nu\int_0^t A u_s\, ds + \int_0^t \mathcal{P}\mathcal{G}u_s \circ d\mathcal{W}_s 
\end{equation}
where $A = -\mathcal{P}\Delta$ is the Stokes Operator. The mapping $\mathcal{G}$ is defined along the components of $\mathcal{W}$ through a collection of smooth vector fields $(\xi_i)$, where here the $i^{\textnormal{th}}$ component of $\mathcal{G}$, $\mathcal{G}_i$, represents either the transport operator $\mathcal{L}_{\xi_i}$ defined on a vector field $f$ by 
\begin{equation} \label{def of transport} \mathcal{L}_{\xi_i}f = (\xi_i \cdot \nabla)f = \sum_{j=1}^3\xi_i^j\partial_j f
\end{equation}
or the transport-stretching operator $B_i$,
\begin{equation} \label{def of trans stretch}B_if := \left(\mathcal{L}_{\xi_i} + \mathcal{T}_{\xi_i} \right)f := \sum_{j=1}^3\left(\xi_i^j \partial_j f + f^j\nabla \xi_i^j\right)
\end{equation}
where the superscript $j$ denotes the $j^{\textnormal{th}}$ component of the vector field. The corresponding It\^{o} form of (\ref{general strato}), understood rigorously in [\cite{goodair2024stochastic}] Subsection 3.4, is  
\begin{equation} \label{general itoo}
    u_t = u_0 - \int_0^t\mathcal{P}\mathcal{L}_{u_s}u_s\ ds - \nu\int_0^t A u_s\, ds + \frac{1}{2}\int_0^t \sum_{i=1}^\infty \left(\mathcal{P}\mathcal{G}_i\right)^2u_s \, ds + \int_0^t \mathcal{P}\mathcal{G}u_s d\mathcal{W}_s. 
\end{equation}
Equations of the form (\ref{general itoo}), alongside related models, have been studied in a multitude of texts. Transport noise in fluids has found classical motivations across the works [\cite{brzezniak1992stochastic}, \cite{kraichnan1968small}, \cite{mikulevicius2001equations}, \cite{mikulevicius2004stochastic}], with a surge in popularity over the last ten years due to several intertwining developments. Firstly we mention geometric variational principles as in Holm's [\cite{holm2015variational}] and furthermore [\cite{crisan2022variational}, \cite{holm2021stochastic}, \cite{street2021semi}] whereby intrinsic properties of the fluid are preserved such as Kelvin's Circulation Theorem, as well as a Lagrangian Reynolds Decomposition and Transport Theorem initiated by M\'{e}min [\cite{memin2014fluid}] and expanded upon in [\cite{chapron2018large}, \cite{debussche2024variational}, \cite{resseguier2017geophysical}]. These theoretical derivations are well supported by numerical analysis and data assimilation across the works [\cite{chapron2024stochastic}, \cite{cotter2020data}, \cite{cotter2019numerically},  \cite{crisan2023implementation}, \cite{crisan2023theoretical}, \cite{dufee2022stochastic}, \cite{ephrati2023data}] amongst many others. Another perspective owes to the approach of stochastic model reduction, where noise in the observed large scale structure is introduced through its addition at small scales following some prescribed interaction between them. The notion of scale size is meant in space and time; with this approach rooted in [\cite{majda2001mathematical}] one may have in mind the influence of fast-varying, spatially-localised weather on the overall climate. A Stratonovich transport noise in the Euler and Navier-Stokes equations is derived in this regime, through an infinite scale separation limit, in the papers [\cite{debussche2024second}, \cite{flandoli20212d}, \cite{flandoli2022additive}]. To conclude our motivations let us also mention the potential regularising properties serving as an attraction to transport noise, see for example [\cite{agresti2024delayed}, \cite{coghi2023existence},  \cite{flandoli2021delayed}, \cite{flandoli2010well}, \cite{flandoli2021high}, \cite{galeati2023weak}]. These developments are comprehensively reviewed in [\cite{flandoli2023stochastic}].\\

Despite this attention, the solution theory for such equations still lacks fundamental results from their deterministic counterparts. Where the Stratonovich integral of (\ref{general strati}) is replaced by an It\^{o} integral, one must demand strong conditions on the spatial correlation functions $(\xi_i)$ such as smallness with respect to $\nu > 0$ or parabolicity, see for example [\cite{agresti2024critical}, \cite{agresti2024stochastic}, \cite{brzezniak2021well}, \cite{mikulevicius2004stochastic}]. Henceforth we restrict our discussion to the case of a Stratonovich noise where such assumptions are not imposed. To survey the literature let us contemplate the available \textit{a priori} estimates for the system, which play a fundamental role in the solution theory. Applying the It\^{o} Formula to (\ref{general itoo}), estimates on the $W^{k,2}\left(\mathbb{T}^3;\R^3 \right)$ norm of $u$ rely on taming certain contributions from the noise. We emphasise the terms arising from the It\^{o}-Stratonovich Corrector and quadratic variation of the stochastic integral, yielding
\begin{equation} \label{key noise to control}
    \inner{(\mathcal{P}\mathcal{G}_i)^2u_s}{u_s}_{W^{k,2}} + \norm{\mathcal{P}\mathcal{G}_iu_s}_{W^{k,2}}^2.
\end{equation}
Ultimately we deal with the sum over all $i$ of terms (\ref{key noise to control}), though a decay on $(\xi_i)$ is assumed so that this is unproblematic. In the absence of $\mathcal{P}$ a closed control on (\ref{key noise to control}) is well understood, in the sense that we can bound (\ref{key noise to control}) by $c\norm{u_s}_{W^{k,2}}^2$ where $c$ is dependent on some smooth norm of $\xi_i$; this was shown in [\cite{goodair20233d}] Proposition 2.6 for the transport-stretching noise $B_i$, whilst a similar control on transport and Lie derivative operators has been known since the works [\cite{gyongy1989approximation}, \cite{gyongy2003splitting}, \cite{gyongy1992stochastic}] and has been shown for general pseudo-differential operators in [\cite{tang2023stochastic}]. These results give positive indications towards the solution theory in vorticity form where the Leray Projector is not present, which has led to the existence of local strong solutions of the 3D stochastic Euler equations with a Lie derivative noise in [\cite{crisan2019solution}] and global strong solutions of the 2D stochastic Euler equations with a transport noise in [\cite{lang2023well}]. Introduction of this noise at the level of vorticity follows the principle of Stochastic Advection by Lie Transport developed by Holm in [\cite{holm2015variational}], and although this manifests differently in the 2D and 3D vorticity forms both correspond to the transport-stretching term $B_i$ at the level of velocity. This fact is demonstrated in [\cite{goodair2024thesis}] Subchapter 4.4.4.\\

Our first contribution is a closed control on (\ref{key noise to control}) for arbitrary integer valued $k \geq 0$ in the case $\mathcal{G}_i = B_i$, which we comment on in light of the solution theory for the corresponding vorticity form from [\cite{crisan2019solution}, \cite{lang2023well}]. As a first remark let us mention that working directly with the velocity form is in many cases preferable, which is facilitated by our control. Additionally we point to the fact that higher order smoothness in the 3D case was not shown in [\cite{crisan2019solution}]: that is if one improves the Sobolev regularity of the initial condition then does this smoothness persist on the lifetime of solutions? The answer is affirmative and will follow from our estimates, see Theorem \ref{main existence for euler}. Thirdly we point to a fact appreciated in [\cite{crisan2022solution}], a paper in which the authors consider a rough version of the Euler equation (\ref{general strato}) with $\mathcal{G}_i = B_i$. Their solution theory hinges upon estimates for the vorticity form, stating in Remark 3.6 that ``It is not clear how to obtain a priori estimates [on the velocity] directly due to the projection operators'', which we solve here in the stochastic case; consequently in [\cite{crisan2022solution}], ``altering the structure of the operator appearing in the [noise term] even in a multiplicative way directly impacts the structure of the vorticity equation and prevents us from obtaining a priori
solution estimates'', hence the ability to show estimates at the velocity level affords us the flexibility to consider additional noise perturbations. Moreover we not only show closed estimates in the Sobolev Space but also the equivalent Stokes Space, by which we mean estimates of the term 
\begin{equation} \label{key noise to control Stokes}
    \inner{A^{\frac{k}{2}}(\mathcal{P}\mathcal{G}_i)^2u_s}{A^{\frac{k}{2}}u_s}_{L^2} + \norm{A^{\frac{k}{2}}\mathcal{P}\mathcal{G}_iu_s}_{L^2}^2
\end{equation}
recalling that $A=-\mathcal{P}\Delta$ is the Stokes Operator. We elaborate on these spaces in Subsection \ref{subs functional anal}. Their utility arises in the stochastic Navier-Stokes equations, where it is necessary to extract the gain in regularity from the viscous term in the equation. A closed control of (\ref{key noise to control Stokes}) is entirely non-obvious: in [\cite{goodair2024high}] the bound
\begin{equation} \label{silly not to label}\sum_{i=1}^\infty \left(\inner{A^{\frac{k}{2}}(\mathcal{P}B_i)^2u_s}{A^{\frac{k}{2}}u_s}_{L^2} + \norm{A^{\frac{k}{2}}\mathcal{P}B_iu_s}_{L^2}^2\right) \leq c_{\nu}\norm{A^{\frac{k}{2}}u_s}_{L^2}^2 + \frac{\nu}{2} \norm{A^{\frac{k+1}{2}}u_s}_{L^2}^2
\end{equation}
is shown, and in particular the undesired additional term can be controlled by the viscosity which enables the smoothness of solutions to the stochastic Navier-Stokes equation on its lifetime of existence. Unfortunately as $\nu \rightarrow 0$ then $c_{\nu}$ blows up, so such a bound is unstable in the inviscid limit which is ultimately how we wish to construct solutions of the stochastic Euler equation. The inequality (\ref{silly not to label}) was proven by leveraging a second-order bound on the commutator of $B_i$ and $\Delta$, an approach which is doomed to require the $\varepsilon$ of additional regularity (though one which we note even applies for convex domains with free boundary condition in [\cite{goodair2024high}]).\\

Closing the estimates (\ref{key noise to control}), (\ref{key noise to control Stokes}) for $\mathcal{G}_i = B_i$ depends on the critical observation that $B_i$ preserves gradients and therefore that $\mathcal{P}B_i = \mathcal{P}B_i\mathcal{P}$. As a result, one can rewrite (\ref{key noise to control}) as
\begin{equation} \nonumber
    \inner{\mathcal{P}B_i^2u_s}{u_s}_{W^{k,2}} + \norm{\mathcal{P}B_iu_s}_{W^{k,2}}^2
\end{equation}
which, using that $\mathcal{P}$ commutes with derivatives and is an orthogonal projection, is bounded by $\inner{B_i^2u_s}{u_s}_{W^{k,2}} + \norm{B_iu_s}_{W^{k,2}}^2$ so the control without Leray Projection applies. The case of (\ref{key noise to control Stokes}) is less straightforward, achieved by the appreciation that it admits a bound of the form
\begin{equation} \nonumber
   \sum_{\abs{\alpha}, \abs{\beta} \leq k}\left( \inner{D^\alpha B_i^2f}{D^\beta f} +  \inner{D^\alpha B_if}{D^\beta B_if} + \inner{D^\beta B_i^2f}{D^\alpha f} +  \inner{D^\beta B_if}{D^\alpha B_if}\right)
\end{equation}
so there is a symmetry in the appearance of the derivatives which enables the necessary cancellation. This appears to shed little light on the case of $\mathcal{G}_i = \mathcal{L}_{\xi_i}$, as we do not have that $\mathcal{L}_{\xi_i}$ preserves gradients and so $\mathcal{P}$ remains stuck in the middle of $\mathcal{P}\mathcal{L}_{\xi_i}\mathcal{P}\mathcal{L}_{\xi_i}$ which prevents a cancellation of derivatives. Only when $k=0$ has this been managed, or in other words for estimates in $L^2\left(\mathbb{T}^3;\R^3 \right)$; therefore existence results for analytically weak solutions of the stochastic Navier-Stokes equations as in [\cite{goodair2023zero}], and of analytically \textit{very weak} solutions for the stochastic Euler equations as in [\cite{hofmanova2024global}], are known. For $k \geq 1$ this problem has persisted in the literature for many years, emphasised by the fact that some authors have assumed unexpected commutativity assumptions involving the Leray Projector and noise such as in [\cite{crisan2022spatial}] (3.6), [\cite{tang2022general}] Assumption $D_3$, to enable analysis in this regime. This issue was again highlighted in the recent work [\cite{hu2025local}], whose main result is a control on a term of the form (\ref{key noise to control Stokes}) by $c\norm{u_s}_{W^{k+\frac{1}{2},2}}^2$, although where $\mathcal{P}$ is replaced by the \textit{hydrostatic} Leray Projector appropriate for the fractionally dissipated stochastic Primitive equations. Their method involves commutator estimates of the projector and transport operator, which similarly to the commutator bounds used to produce (\ref{silly not to label}), necessarily relies on an extra degree of smoothness. In the spirit of that work one would expect the usual theory for strong solutions of the stochastic Navier-Stokes equations to follow (although only for $k \geq 3$ where the commutator estimate holds), however we would clearly need to close the estimate in $W^{k,2}(\mathbb{T}^3;\R^3)$ to pass to (local) strong solutions of the stochastic Euler equation which has remained open in the literature.\\

We successfully show closed estimates for (\ref{key noise to control}), (\ref{key noise to control Stokes}) in the case $\mathcal{G}_i = \mathcal{L}_{\xi_i}$. To achieve this we in fact fall back to the case of $B_i$, by adding and subtracting the stretching term $\mathcal{T}_{\xi_i}$ at the right moment. Rewriting $\mathcal{L}_{\xi_i} = B_i - \mathcal{T}_{\xi_i}$ allows us to handle $\mathcal{P}$ associated to $B_i$ as discussed, then the remaining $-\mathcal{T}_{\xi_i}$ is zero-order and is introduced at a time so as to not compromise the estimates. We emphasise again that a control for $B_i$ is also new to this work. \\ %and that the noise estimates facilitating the solution theory in vorticity form do not clearly indicate how to obtain the corresponding estimates in velocity form, with the complications of the Leray Projector and the stretching term that has not been dealt with at higher norms.\\ 

As a result we obtain a unique local strong solution of the stochastic Euler equation 
\begin{equation} \label{general itoo euler}
    u_t = u_0 - \int_0^t\mathcal{P}\mathcal{L}_{u_s}u_s\ ds + \frac{1}{2}\int_0^t \sum_{i=1}^\infty \left(\mathcal{P}\mathcal{L}_{\xi_i}\right)^2u_s \, ds + \sum_{i=1}^\infty \int_0^t \mathcal{P}\mathcal{L}_{\xi_i}u_s dW^i_s 
\end{equation}
where the stochastic integral is written in component form. The existence of strong solutions of (\ref{general itoo euler}) are new, answering the analytical challenges of noise estimates along with an appreciation that the vorticity form does not yield something more amenable as in the case of transport-stretching noise. The solution is obtained from an inviscid limit of corresponding stochastic Navier-Stokes equations, for which the existence is also new. As a specific motivation for considering the noise without stretching term, we at first mention the classical manuscripts [\cite{kraichnan1968small}, \cite{mikulevicius2004stochastic}], and the recent work [\cite{debussche2024second}] where Stratonovich transport noise at the level of velocity is derived in the 3D Navier-Stokes equations in the spirit of stochastic model reduction. Whilst no stretching noise is present there is an additional \textit{It\^{o}-Stokes drift}, which is of independent concern in the solution theory. Let us also mention [\cite{agresti2024global}] which is largely driven by the relevance of transport noise at the level of velocity as opposed to vorticity, whereby a corresponding result [\cite{flandoli2021high}] existed prior.\\ %In summary, the main contributions of this paper are the following:

%\begin{enumerate}
 %   \item Closed estimates of the terms (\ref{key noise to control}), (\ref{key noise to control Stokes}) in the case $\mathcal{G}_i = B_i$ and the existence of local smooth solutions of the corresponding stochastic Euler equation, unifying the treatments of [\cite{crisan2019solution}, \cite{lang2023well}] to the common velocity form and extending [\cite{crisan2019solution}] to show the smoothness of solutions in 3D.

  %  \item Closed estimates of the terms (\ref{key noise to control}), (\ref{key noise to control Stokes}) in the case $\mathcal{G}_i = \mathcal{L}_{\xi_i}$ and the existence of local smooth solutions of the corresponding stochastic Euler equation, which is new.
%\end{enumerate}

In fact, we show the existence of a unique maximal smooth solution of the equation (\ref{general itoo euler}) and the corresponding transport-stretching case. Smoothness is in the sense that the $W^{k,2}(\T;\R^3)$ regularity of the initial condition persists on the lifetime of solutions, where the maximal time $\Theta$ is characterised by the blow-up
$$\int_0^{\Theta}\norm{u_s}_{W^{1,\infty}}ds = \infty$$
whenever the solution is not global. In the deterministic setting, the sharpest known blow-up criterion is due to Beale, Kato and Majda [\cite{beale1984remarks}] demonstrating that solutions exist until explosion of the vorticity in $L^1\left([0,T];L^\infty\right)$. This result was reproduced for the transport-stretching noise in [\cite{crisan2019solution}], though of course relies heavily on access to the vorticity form which we do not have. Without ability to use the vorticity form, the sharpest criterion that we could expect is that solutions exist until blow-up of the velocity in $L^1\left([0,T];W^{1,\infty}\right)$ which is precisely what we prove. This appears to improve the known results for stochastic Euler equations in velocity form; the closest that we are aware of is explosion in $L^{\infty}\left([0,T];W^{1,\infty}\right)$ which was shown in [\cite{glatt2012local}], see also [\cite{alonso2021local}] for an abstract result. The works [\cite{alonso2021local}, \cite{glatt2011cauchy}] arrive at their criterion by showing the existence of global solutions of the equation truncated by a cut-off in the $W^{1,\infty}$ norm. Our method involves delicate stopping time arguments, needing to overcome issues such as the uniform first-hitting control on the approximating sequence of solutions.\\ %Let us also mention [\cite{agresti2022nonlinear}] for a treatment of blow-up criteria in parabolic SPDEs. 

Additionally we deduce that a local strong solution of the It\^{o} equation is a true solution of the Stratonovich equation, in the sense that it satisfies the identity in Stratonovich form in $L^2(\mathbb{T}^3;\R^3)$. We emphasise that there is a `cost of a derivative' in this conversion due to the differential noise operator, meaning that if the solution has $L^2\left([0,T];W^{k,2} \right)$ regularity then we can only deduce that the Stratonovich form holds in $W^{k-3,2}(\T;\R^3)$ even though the It\^{o} solution is well defined in $W^{k-2,2}(\T;\R^3)$. In the Navier-Stokes equations this is an important distinction, as the energy space for strong solutions gives just $L^2\left([0,T];W^{2,2} \right)$ regularity which is only sufficient to justify the Stratonovich identity in $W^{-1,2}$. In our case of Euler, solutions have at least $C\left([0,T];W^{3,2} \right)$ regularity hence they genuinely solve the Stratonovich identity in $L^2(\T;\R^3)$. This seems to be the first work showing the existence of strong solutions to the genuine Stratonovich form of stochastic Euler equations with transport noise, based upon the recent rigorous conversion result of [\cite{goodair2024stochastic}] Theorem 3.4, instead of simply solving the It\^{o} form obtained heuristically from the Stratonovich one. A `very weak' solution has been shown in [\cite{galeati2023weak}], though in passing to strong solutions one meets the issue of identifying
$$\int_0^t \inner{\mathcal{G}u_s}{\phi}\circ d\mathcal{W}_s = \inner{\int_0^t \mathcal{G}u_s\circ d\mathcal{W}_s}{\phi}$$
 whereby the cost of a derivative becomes key once more: see [\cite{goodair2024stochastic}] Theorem 3.7 and the discussion thereafter for further detail. We now overview the structure of the paper.
\begin{itemize}
   \item Section \ref{section introduction}, this section, concludes with various preliminaries and a statement of the main results. The functional analytic preliminaries are given in Subsection \ref{subs functional anal} whilst the stochastic ones are given in \ref{subs stoch prelims}. Some key properties of transport and transport-stretching noise are stated in Subsection \ref{subsection transport and stretching}. The main results are written in Subsection \ref{subs main results}.

    \item Section \ref{section estimates on the noise} concerns the estimates on the noise terms.

    \item Section \ref{section strong solutions euler} addresses the existence, uniqueness and blow-up of solutions to the stochastic Euler equations.

    \item The paper is supplemented by an appendix, Section \ref{appendix}, containing key results from the literature. 
\end{itemize}

%Additional things: quote Crisan and Leahy and the page 18 stuff, chat about bounded domain extension in 

\subsection{Functional Analytic Preliminaries} \label{subs functional anal}

We now recap the classical functional framework for the study of the deterministic Navier-Stokes and Euler Equations on the three dimensional Torus $\mathbbm{T}^3$. The nonlinear operator $\mathcal{L}$ is defined for sufficiently regular functions $f,g:\T \rightarrow \R^3$ by $\mathcal{L}_fg:= \sum_{j=1}^3f^j\partial_jg.$ Here and throughout the text we make no notational distinction between differential operators acting on a vector valued function or a scalar valued one; that is, we understand $\partial_jg$ by its component mappings $(\partial_jg)^l := \partial_jg^l$. For any $m \geq 1$, the mapping $\mathcal{L}: W^{m+1,2} \rightarrow W^{m,2}$ defined by $f \mapsto \mathcal{L}_ff$ is continuous. Some more technical properties of the operator are given at the end of this subsection. For the divergence-free condition we mean a function $f$ such that the property $$\textnormal{div}f := \sum_{j=1}^3 \partial_jf^j = 0$$ holds in the sense of weak derivatives. By the zero-average condition we mean a function $f$ such that $\int_{\T}f(x)dx = 0$. We introduce some new notations to incorporate the divergence-free and zero-average restrictions. These facts are all presented in [\cite{robinson2016three}] Sections 1 and 2. Recall that any function $f \in L^2(\mathbb{T}^3;\R^3)$ admits the representation \begin{align} \label{fourier rep}f(x) = \sum_{k \in \mathbb{Z}^3}f_ke^{ik\cdot x}\end{align} whereby each $f_k \in \mathbb{C}^3$ is such that $f_k = \overline{f_{-k}}$. Let us further introduce $L^2_{\sigma}$ the subset of $L^2(\T;\R^3)$ of zero-average functions $f$ whereby for all $k \in \mathbbm{Z}^3$, $k \cdot f_k = 0$ with $f_k$ as in (\ref{fourier rep}). This enforces the divergence-free condition. For general $m \in \N$ we introduce $W^{m,2}_{\sigma}$ as the intersection of $W^{m,2}(\mathbb{T}^3;\R^3)$ respectively with $L^2_{\sigma}$, with $W^{0,2}_{\sigma}$ simply $L^2_{\sigma}$.
We define the Leray Projector $\mathcal{P}$ as the orthogonal projection in $L^2(\mathbb{T}^3;\R^3)$ onto $L^2_{\sigma}$ and the Stokes Operator $A$ by $-\mathcal{P}\Delta$. The Leray Projector commutes with derivatives (\textit{though not true on a bounded domain!}). There exists a collection of functions $(a_k) \in W^{m,2}_{\sigma}$ for all $m \in \N$, which are eigenfunctions of $A$ with eigenvalues $0 < (\lambda_k)$ increasing to infinity, and form an orthonormal basis of $L^2_{\sigma}$.
For every $s \geq 0$, we define $D(A^s)$ as the subspace of functions $f \in L^2_{\sigma}$ such that $$\sum_{k=1}^\infty \lambda_k^{2s}\inner{f}{a_k}^2 < \infty.$$
Furthermore let us define the mapping $A^s: D(A^s) \rightarrow L^2_{\sigma}$ by $$A^s: f \mapsto \sum_{k=1}^\infty \lambda_k^s\inner{f}{a_k}a_k$$ and associated inner product and norm on $D(A^s)$ by $$ \inner{f}{g}_{A^s} = \inner{A^{s}f}{A^{s}g}, \qquad \norm{f}_{A^s}^2 = \inner{f}{f}_{A^s}$$
where $\inner{\cdot}{\cdot}$ denotes $\inner{\cdot}{\cdot}_{L^2(\mathbbm{T}^3;\R^3)}.$ Defining $\mathcal{P}_n$ by $\mathcal{P}_nf = \sum_{k=1}^n\inner{f}{a_k}a_k$, $\mathcal{P}_n$ is an orthogonal projection in $D(A^s)$ and for $f\in D(A^s)$, $(\mathcal{P}_nf)$ converges to $f$ in $D(A^s)$.
Referring again to [\cite{robinson2016three}] Theorem 2.27, we have $D(A^{\frac{m}{2}}) = W^{m,2}_{\sigma}$ and this norm is equivalent to the usual $W^{m,2}(\mathbb{T}^3;\R^3)$ norm. Henceforth we shall refer to the usual $W^{k,p}(\mathbb{T}^3;\R^3)$ spaces by simply $W^{k,p}$.
Thus we may equip $W^{m,2}_{\sigma}$ with $\inner{\cdot}{\cdot}_{A^{\frac{m}{2}}}$ or the usual inner product $\inner{\cdot}{\cdot}_{W^{m,2}}$. 
If $f,g \in D(A^{s})$ then for any $0 \leq s_1, s_2, r_1, r_2$ with $s_1 + s_2 = s$, $ r_1 + r_2 = s$, 
\begin{equation} \label{stokes operator innner property} \inner{A^{s_1}f}{A^{s_2}g} = \inner{A^{r_1}f}{A^{r_2}g}\end{equation}
and $A^{s_2}f \in D(A^{s_1})$, $A^sf = A^{s_1}A^{s_2}f$. Moreover $A^m$ can be simply defined on $W^{2m,2}\left(\mathbb{T}^3;\R^3\right)$ by $(-\mathcal{P}\Delta)^m$ which agrees with the above definition on $W^{2m,2}_{\sigma}$, and can be further expressed as $$A^m f = (-1)^m\mathcal{P}\Delta^mf = (-1)^m\mathcal{P}\sum_{j_1=1}^3 \dots \sum_{j_m=1}^3\partial_{j_1}^2 \dots \partial_{j_m}^2f.$$
Additionally we have that
$$\inner{f}{g}_{A^{\frac{1}{2}}} = \sum_{j=1}^3\inner{\partial_jf}{\partial_jg}$$
so that for $m$ odd, 
\begin{align}
   \nonumber \inner{f}{g}_{A^{\frac{m}{2}}} &= \inner{A^{\frac{1}{2}}A^{\frac{m-1}{2}}f}{A^{\frac{1}{2}}A^{\frac{m-1}{2}}g}    = \sum_{j=1}^3\inner{\partial_jA^{\frac{m-1}{2}}f}{\partial_jA^{\frac{m-1}{2}}g}\\
     &= \sum_{j=1}^3\sum_{k_1=1}^3 \dots \sum_{k_{\frac{m-1}{2}}=1}^3\sum_{l_1=1}^3 \dots \sum_{l_{\frac{m-1}{2}}=1}^3\inner{\partial_j\partial_{k_1}^2 \dots \partial_{k_{\frac{m-1}{2}}}^2f}{\partial_j\partial_{l_1}^2 \dots \partial_{l_{\frac{m-1}{2}}}^2g} \label{odd power inner product}
\end{align}
having happily commuted $\mathcal{P}$ with derivatives and used that $\mathcal{P}f = f$. In the simpler case of $m$ even,
\begin{equation} \label{even power inner product}
     \inner{f}{g}_{A^{\frac{m}{2}}} = \sum_{k_1=1}^3 \dots \sum_{k_{\frac{m}{2}}=1}^3\sum_{l_1=1}^3 \dots \sum_{l_{\frac{m}{2}}=1}^3\inner{\partial_{k_1}^2 \dots \partial_{k_{\frac{m}{2}}}^2f}{\partial_{l_1}^2 \dots \partial_{l_{\frac{m}{2}}}^2g}.
\end{equation}
Returning to the nonlinear term, we recall the critical duality that for $\phi \in W^{1,2}_{\sigma}$, $f,g \in W^{1,2}$,
\begin{equation}\label{wloglhs}\inner{\mathcal{L}_{\phi}f}{g}= -\inner{f}{\mathcal{L}_{\phi}g}\end{equation}
   which further implies that \begin{equation} \label{cancellationproperty'} \inner{\mathcal{L}_{\phi}f}{f}= 0.\end{equation}
To estimate the nonlinear term in higher norms we recall [\cite{glatt2012local}] Lemma  2.1, asserting that for any $m \geq 3$ there exists a constant $C$ such that for all $f \in W^{m,2}_{\sigma}$, $g \in W^{m+1,2}_{\sigma}$,
\begin{align} \label{nonlinear term control}
    \left\vert \sum_{\abs{\alpha} \leq m}\inner{D^{\alpha}\mathcal{P}\mathcal{L}_fg}{D^{\alpha}g}\right\vert \leq C\left(\norm{f}_{W^{1,\infty}}\norm{g}_{W^{m,2}} + \norm{f}_{W^{m,2}}\norm{g}_{W^{1,\infty}} \right)\norm{g}_{W^{m,2}}.
\end{align}
We note that the result continues to hold for $m < 3$, although it is only meaningful provided $f$ and $g$ have sufficient regularity for the right hand side to be finite. Here and throughout the text, for $\alpha = (\alpha_1, \alpha_2, \alpha_3) \in \left(\N \cup \{0\}\right)^3$, $D^{\alpha}$ represents the weak partial differential operator $D^{\alpha} = \partial_{1}^{\alpha_1}\partial_2^{\alpha_2}\partial_3^{\alpha_3}$. We will consider a partial ordering on the three dimensional multi-indices by $\alpha \leq \beta$ if and only if for all $l =1, 2, 3$ we have that $\alpha_l \leq \beta_l$. We extend this to notation $<$ by 
$\alpha < \beta$ if and only if $\alpha \leq \beta$ and for some $l = 1, 2, 3$, $\alpha_l < \beta_l$. In general throughout the manuscript we shall use $c$ as a generic constant changing from line to line, where dependence on any relevant quantities will be made explicit; in place of this we may also use the notation $ f \lesssim g$ to denote $f \leq cg$ for some such $c$.

\subsection{Stochastic Preliminaries} \label{subs stoch prelims}

Let $(\Omega,\mathcal{F},(\mathcal{F}_t), \mathbbm{P})$ be a fixed filtered probability space satisfying the usual conditions of completeness and right continuity. We take $\mathcal{W}$ to be a cylindrical Brownian motion over some Hilbert Space $\mathfrak{U}$ with orthonormal basis $(e_i)$. Recall (e.g. [\cite{lototsky2017stochastic}], Definition 3.2.36) that $\mathcal{W}$ admits the representation $\mathcal{W}_t = \sum_{i=1}^\infty e_iW^i_t$ as a limit in $L^2(\Omega;\mathfrak{U}')$ whereby the $(W^i)$ are a collection of i.i.d. standard real valued Brownian Motions and $\mathfrak{U}'$ is an enlargement of the Hilbert Space $\mathfrak{U}$ such that the embedding $J: \mathfrak{U} \rightarrow \mathfrak{U}'$ is Hilbert-Schmidt and $\mathcal{W}$ is a $JJ^*-$cylindrical Brownian Motion over $\mathfrak{U}'$. Given a process $F:[0,T] \times \Omega \rightarrow \mathscr{L}^2(\mathfrak{U};\mathscr{H})$ progressively measurable and such that $F \in L^2\left(\Omega \times [0,T];\mathscr{L}^2(\mathfrak{U};\mathscr{H})\right)$, for any $0 \leq t \leq T$ we define the stochastic integral $$\int_0^tF_sd\mathcal{W}_s:=\sum_{i=1}^\infty \int_0^tF_s(e_i)dW^i_s,$$ where the infinite sum is taken in $L^2(\Omega;\mathscr{H})$. We can extend this notion to processes $F$ which are such that $F(\omega) \in L^2\left( [0,T];\mathscr{L}^2(\mathfrak{U};\mathscr{H})\right)$ for $\mathbbm{P}-a.e.$ $\omega$ via the traditional localisation procedure. In this case the stochastic integral is a local martingale in $\mathscr{H}$. We defer to [\cite{goodair2024stochastic}] Chapter 2 for further details on this construction and properties of the stochastic integral. The stochastic integral of (\ref{general itoo}) is then understood by $\mathcal{P}\mathcal{G}u_s(e_i) = \mathcal{P}\mathcal{G}_i(u_s)$, see [\cite{goodair2024stochastic}] Subchapter 3.2. We shall make frequent use of the Burkholder-Davis-Gundy Inequality ([\cite{da2014stochastic}] Theorem 4.36), passage of a bounded linear operator through the stochastic integral ([\cite{goodair2024stochastic}] Proposition 2.16) and the It\^{o} Formula (Proposition \ref{Ito formula}).

\subsection{Transport and Transport-Stretching Noise} \label{subsection transport and stretching}

We collect some fundamental properties of the transport and transport-stretching noise $\mathcal{L}_{\xi_i}$ and $B_i$, as defined in (\ref{def of transport}), (\ref{def of trans stretch}). We always assume at least that $\xi_i \in L^2_{\sigma} \cap W^{1,\infty}$ with $\sum_{i=1}^\infty \norm{\xi_i}_{W^{1,\infty}}^2 < \infty$, where additional spatial regularity will be imposed in the statement of the results. These properties are taken from  [\cite{goodair20233d}] Subsection 2.3, where a more complete description is deferred to. Firstly for $k = 0, 1, 2, \dots$, there exists a constant $c$ such that  
\begin{align}
    \label{T_ibound}\norm{\mathcal{T}_{\xi_i}f}_{W^{k,2}}^2 &\leq  c \norm{\xi_i}^2_{W^{k+1,\infty}}\norm{f}^2_{W^{k,2}}\\
    \label{L_ibound} \norm{\mathcal{L}_{\xi_i}f}_{W^{k,2}}^2 &\leq c\norm{\xi_i}^2_{W^{k,\infty}}\norm{f}^2_{W^{k+1,2}}\\
    \label{boundsonB_i} \norm{B_if}_{W^{k,2}}^2 &\leq c\norm{\xi_i}^2_{W^{k+1,\infty}}\norm{f}^2_{W^{k+1,2}}
\end{align}
for $f, \xi_i$ as required by the right hand side. Moreover $\mathcal{T}_{\xi_i}$ is a bounded linear operator on $L^2$ so has adjoint $\mathcal{T}_{\xi_i}^*$ satisfying the same boundedness. $\mathcal{L}_{\xi_i}$ is a densely defined operator in $L^2$ with domain of definition $W^{1,2}$, and has adjoint $\mathcal{L}_{\xi_i}^*$ in this space given by $-\mathcal{L}_{\xi_i}$ with same dense domain of definition. Likewise then $B_i^*$ is the densely defined adjoint $-\mathcal{L}_{\xi_i} + \mathcal{T}_{\xi_i}^*$. We also note from [\cite{goodair20233d}] Lemma 2.7 that $\mathcal{P}B_i = \mathcal{P}B_i\mathcal{P}$ hence $\mathcal{P}B_i^2 = (\mathcal{P}B_i)^2$.

\subsection{Main Results} \label{subs main results}

We split our main results into two, the first being the estimates on the noise and the second being the resultant solution theory for the stochastic Euler equation. Our noise estimates are as follows.

\begin{proposition} \label{main prop}
    Fix $m \in \N$. There exists a constant $c$ such that for all $\xi_i \in W^{m+2,\infty} \cap L^2_{\sigma}$ and $f \in W^{m+2,2}_{\sigma}$,
    \begin{align*}
   \inner{\mathcal{P}B_i^2f}{f}_{A^{\frac{m}{2}}} +  \norm{\mathcal{P}B_if}_{A^{\frac{m}{2}}}^2  &\leq c\norm{\xi_i}_{W^{m +2,\infty}}^2\norm{f}_{A^{\frac{m}{2}}}^2  ,\\
    \inner{\mathcal{P}B_if}{f}_{A^{\frac{m}{2}}}^2 &\leq c\norm{\xi_i}^2_{W^{m +1,\infty}}\norm{f}^4_{A^{\frac{m}{2}}}, \\
    \inner{\mathcal{P}B_i^2f}{f}_{W^{m,2}} +  \norm{\mathcal{P}B_if}_{W^{m,2}}^2  &\leq c\norm{\xi_i}_{W^{m +2,\infty}}^2\norm{f}_{W^{m,2}}^2 ,\\
    \inner{\mathcal{P}B_if}{f}_{W^{m,2}}^2 &\leq c\norm{\xi_i}^2_{W^{m +1,\infty}}\norm{f}^4_{W^{m,2}},\\
    \inner{\left(\mathcal{P}\mathcal{L}_{\xi_i}\right)^2f}{f}_{A^{\frac{m}{2}}} +  \norm{\mathcal{P}\mathcal{L}_{\xi_i}f}_{A^{\frac{m}{2}}}^2  &\leq c\norm{\xi_i}_{W^{m +2,\infty}}^2\norm{f}_{A^{\frac{m}{2}}}^2 ,\\
    \inner{\mathcal{P}\mathcal{L}_{\xi_i}f}{f}_{A^{\frac{m}{2}}}^2 &\leq c\norm{\xi_i}^2_{W^{m,\infty}}\norm{f}^4_{A^{\frac{m}{2}}},\\
       \inner{\left(\mathcal{P}\mathcal{L}_{\xi_i}\right)^2f}{f}_{W^{m,2}} +  \norm{\mathcal{P}\mathcal{L}_{\xi_i}f}_{W^{m,2}}^2  &\leq c\norm{\xi_i}_{W^{m +2,\infty}}^2\norm{f}_{W^{m,2}}^2 ,\\
    \inner{\mathcal{P}\mathcal{L}_{\xi_i}f}{f}_{W^{m,2}}^2 &\leq c\norm{\xi_i}^2_{W^{m,\infty}}\norm{f}^4_{W^{m,2}}.
\end{align*}
\end{proposition}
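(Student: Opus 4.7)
The plan is to establish the four bounds for $B_i$ first, as telegraphed in the introduction, and then derive the four bounds for $\mathcal{L}_{\xi_i}$ via the substitution $\mathcal{L}_{\xi_i} = B_i - \mathcal{T}_{\xi_i}$. Within each family I would treat the Stokes-space estimates (inner product $\inner{\cdot}{\cdot}_{A^{m/2}}$) and the Sobolev-space estimates (inner product $\inner{\cdot}{\cdot}_{W^{m,2}}$) in parallel: the formulas (\ref{odd power inner product})--(\ref{even power inner product}) express the Stokes inner product as a finite sum of pairings $\sum\inner{D^\alpha \cdot}{D^\beta \cdot}$ of exactly the same type that appears in the $W^{m,2}$ inner product by definition, so every argument can be cast at the level of such pairings.

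For the $B_i$ bounds in $W^{m,2}$, the identity $\mathcal{P}B_i = \mathcal{P}B_i\mathcal{P}$ gives $\mathcal{P}B_i^2 = (\mathcal{P}B_i)^2$, and together with the commutation of $\mathcal{P}$ with $D^\alpha$ and $\mathcal{P}f = f$ (since $f \in W^{m,2}_\sigma$), one obtains $\inner{\mathcal{P}B_i^2 f}{f}_{W^{m,2}} = \inner{B_i^2 f}{f}_{W^{m,2}}$ and $\norm{\mathcal{P}B_if}_{W^{m,2}}^2 \leq \norm{B_if}_{W^{m,2}}^2$, reducing the matter to the projection-free bounds from [\cite{goodair20233d}] Proposition 2.6. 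For the $B_i$ bounds in $D(A^{m/2})$ the same commutation peels $\mathcal{P}$ off the outer factor of each pairing produced by (\ref{odd power inner product})--(\ref{even power inner product}), and further manipulation of the inner $\mathcal{P}$ via $\mathcal{P}B_i = \mathcal{P}B_i\mathcal{P}$ yields a symmetric sum over $|\alpha|,|\beta| \leq m$ of the form
\[\inner{D^\alpha B_i^2 f}{D^\beta f} + \inner{D^\alpha B_if}{D^\beta B_if} + \inner{D^\beta B_i^2 f}{D^\alpha f} + \inner{D^\beta B_if}{D^\alpha B_if}\]
displayed in the introduction. Expanding $B_i = \mathcal{L}_{\xi_i} + \mathcal{T}_{\xi_i}$, the pieces involving $\mathcal{T}_{\xi_i}$ are of lower order in $f$ and dispatched by (\ref{T_ibound})--(\ref{boundsonB_i}); the genuinely top-order $\mathcal{L}_{\xi_i}$-on-$\mathcal{L}_{\xi_i}$ contribution is handled by commuting $D^\alpha, D^\beta$ past $\mathcal{L}_{\xi_i}$ via Leibniz and then invoking the anti-adjointness $\inner{\mathcal{L}_{\xi_i}g}{h} = -\inner{g}{\mathcal{L}_{\xi_i}h}$ from (\ref{wloglhs}) to annihilate the unique term of maximal order; the $\alpha\leftrightarrow\beta$ symmetry ensures that the otherwise troublesome cross contribution is cancelled rather than doubled. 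Each Leibniz commutator transfers one derivative from $f$ onto $\xi_i$, yielding the $\norm{\xi_i}_{W^{m+2,\infty}}^2\norm{f}^2$ bound. The quadratic estimate $\inner{\mathcal{P}B_if}{f}_{\cdot}^2$ is sharper because the top-order cancellation requires only a single integration by parts, saving one derivative on $\xi_i$ and producing the $W^{m+1,\infty}$ regularity.

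For the $\mathcal{L}_{\xi_i}$ family I follow the introduction's prescription: expand $(\mathcal{P}\mathcal{L}_{\xi_i})^2$ and, at the inner factor, add and subtract $\mathcal{T}_{\xi_i}$ to write $\mathcal{P}\mathcal{L}_{\xi_i} = \mathcal{P}B_i - \mathcal{P}\mathcal{T}_{\xi_i}$. The $\mathcal{P}B_i$ piece eliminates the trapped projector via $\mathcal{P}B_i = \mathcal{P}B_i\mathcal{P}$, and substituting the same decomposition in the outer factor reduces matters to the already-established $B_i$ bounds, together with cross and $\mathcal{T}_{\xi_i}$-squared terms; each of the latter contains a factor of $\mathcal{T}_{\xi_i}$ which renders it of lower order and amenable to Cauchy-Schwarz combined with (\ref{T_ibound})--(\ref{boundsonB_i}). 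For the single-$\mathcal{P}\mathcal{L}_{\xi_i}$ quadratic estimate one saves an additional derivative ($W^{m,\infty}$ in place of $W^{m+1,\infty}$) by not going through the $B_i$-detour but using $\mathcal{L}_{\xi_i}^* = -\mathcal{L}_{\xi_i}$ directly, since the $\mathcal{T}_{\xi_i}$ otherwise arising from the $B_i$ decomposition is not incurred. The main obstacle is the top-order cancellation in the Stokes inner product when $m$ is odd, where (\ref{odd power inner product}) features an unpaired first derivative on either side; one must verify carefully that the Leibniz expansion respects the $\alpha\leftrightarrow\beta$ symmetry required for $\mathcal{L}_{\xi_i}^* = -\mathcal{L}_{\xi_i}$ to extract the cancellation. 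Everything else reduces to bookkeeping of Leibniz remainders controlled by (\ref{T_ibound})--(\ref{boundsonB_i}), but the entire proposition hinges on this single top-order cancellation, which is precisely what $\mathcal{P}B_i = \mathcal{P}B_i\mathcal{P}$ (and the $B_i$-detour for $\mathcal{L}_{\xi_i}$) enables.
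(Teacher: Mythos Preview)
Your overall architecture matches the paper's: reduce both the Stokes and Sobolev inner products to sums of pairings $\inner{D^\alpha\cdot}{D^\beta\cdot}$, exploit $\mathcal{P}B_i=\mathcal{P}B_i\mathcal{P}$ to remove the projector, and for $\mathcal{L}_{\xi_i}$ route through $B_i$ at the moment the trapped $\mathcal{P}$ obstructs cancellation. The paper organises this via a key lemma bounding the symmetrised four-term sum
\[
\inner{D^\alpha B_i^2f}{D^\beta f}+\inner{D^\alpha B_if}{D^\beta B_if}+\inner{D^\beta B_i^2f}{D^\alpha f}+\inner{D^\beta B_if}{D^\alpha B_if},
\]
and then deduces all eight inequalities from it and its $\mathcal{L}_{\xi_i}$ analogue.

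There is, however, a genuine gap in your accounting of orders. The assertion that after expanding $B_i=\mathcal{L}_{\xi_i}+\mathcal{T}_{\xi_i}$ ``the pieces involving $\mathcal{T}_{\xi_i}$ are of lower order in $f$ and dispatched by (\ref{T_ibound})--(\ref{boundsonB_i})'' is false. The cross terms such as $\inner{D^\alpha\mathcal{T}_{\xi_i}\mathcal{L}_{\xi_i}f}{D^\beta f}$ and $\inner{D^\alpha\mathcal{L}_{\xi_i}f}{D^\beta\mathcal{T}_{\xi_i}f}$ still carry one excess derivative; after the obvious integration by parts and the $\alpha\leftrightarrow\beta$ swap they collapse to $\inner{\mathcal{L}_{\xi_i}D^\alpha f}{(\mathcal{T}_{\xi_i}+\mathcal{T}_{\xi_i}^*)D^\beta f}+\inner{\mathcal{L}_{\xi_i}D^\beta f}{(\mathcal{T}_{\xi_i}+\mathcal{T}_{\xi_i}^*)D^\alpha f}$, which is \emph{still} one order too high. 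The paper closes this by computing that the commutator $[\mathcal{T}_{\xi_i},\mathcal{L}_{\xi_i}]$ is of zeroth order, and only then does the $\alpha\leftrightarrow\beta$ symmetry yield complete cancellation of the remaining first-order contribution. A parallel issue arises with the Leibniz remainders: commuting $D^\alpha$ past $B_i$ drops only one order, so the residual terms (the paper's ``third term'') are again at excess one and require a second commutator identity, namely that $[B_{D^\gamma\xi_i},\mathcal{L}_{\xi_i}]$ is first order. The same correction applies verbatim to your $\mathcal{L}_{\xi_i}$ argument: after the $B_i$-detour removes the trapped $\mathcal{P}$, the cross and remainder terms are not directly controllable and need these same commutator computations. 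None of this changes the strategy, but without these two commutator facts the proof does not close.
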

We stress again that $\mathcal{P}B_i^2 = \left(\mathcal{P}B_i\right)^2$. Proposition \ref{main prop} will be proven in Section \ref{section estimates on the noise}. We state the solution theory in terms of the general stochastic Euler equation
\begin{equation} \label{general itoo euler proper}
    u_t = u_0 - \int_0^t\mathcal{P}\mathcal{L}_{u_s}u_s\ ds + \frac{1}{2}\int_0^t \sum_{i=1}^\infty \left(\mathcal{P}\mathcal{G}_i\right)^2u_s \, ds + \int_0^t \mathcal{P}\mathcal{G}u_s d\mathcal{W}_s
\end{equation}
where $\mathcal{G}_i$ can be taken as either the transport-stretching $B_i$ or the transport $\mathcal{L}_{\xi_i}$, and the stochastic integral is understood in the sense of Subsection \ref{subs stoch prelims}. Henceforth we fix an arbitrary $T>0$, the horizon on which we shall establish our solution theory. 

\begin{definition} \label{v valued local def}
Fix $3 \leq m \in\N$ and let $u_0:\Omega \rightarrow W^{m,2}_{\sigma}$ be $\mathcal{F}_0-$ measurable. A pair $(u,\tau)$ where $\tau$ is a stopping time such that for $\mathbbm{P}-a.e.$ $\omega$, $\tau(\omega) \in (0,T]$, and $u$ is an adapted process in $W^{m,2}_{\sigma}$ such that for $\mathbbm{P}-a.e.$ $\omega$, $u_{\cdot}(\omega) \in C\left([0,T];W^{m,2}_{\sigma}\right)$, is said to be a local $W^{m,2}_{\sigma}-$strong solution of the equation (\ref{general itoo euler proper}) if the identity
\begin{equation} \label{local identity}
      u_t = u_0 - \int_0^{t \wedge \tau}\mathcal{P}\mathcal{L}_{u_s}u_s\ ds + \frac{1}{2}\int_0^{t \wedge \tau} \sum_{i=1}^\infty \left(\mathcal{P}\mathcal{G}_i\right)^2u_s \, ds + \int_0^{t \wedge \tau} \mathcal{P}\mathcal{G}u_s d\mathcal{W}_s. 
\end{equation}
holds $\mathbbm{P}-a.s.$ in $L^2_{\sigma}$ for all $t \in [0,T]$.
\end{definition}

\begin{definition} \label{V valued maximal definition}
A pair $(u,\Theta)$ such that there exists a sequence of stopping times $(\theta_j)$ which are $\mathbbm{P}-a.s.$ monotone increasing and convergent to $\Theta$, whereby $(u_{\cdot \wedge \theta_j},\theta_j)$ is a local $W^{m,2}_{\sigma}-$strong solution of the equation (\ref{general itoo euler proper}) for each $j$, is said to be a maximal $W^{m,2}_{\sigma}-$strong solution of the equation (\ref{general itoo euler proper}) if for any other pair $(v,\Gamma)$ with this property then $\Theta \leq \Gamma$ $\mathbbm{P}-a.s.$ implies $\Theta = \Gamma$ $\mathbbm{P}-a.s.$.
\end{definition}

\begin{definition} \label{v valued maximal unique}
A maximal $W^{m,2}_{\sigma}-$strong solution $(u,\Theta)$ of the equation (\ref{general itoo euler proper}) is said to be unique if for any other such solution $(v,\Gamma)$, then $\Theta = \Gamma$ $\mathbbm{P}-a.s.$ and \begin{equation} \nonumber\mathbbm{P}\left(\left\{\omega \in \Omega: u_{t}(\omega) =  v_{t}(\omega)  \quad \forall t \in [0, \Theta) \right\} \right) = 1. \end{equation}
\end{definition}

\begin{theorem} \label{main existence for euler}
Fix $3 \leq m \in \N$, let $u_0: \Omega \rightarrow W^{m,2}_{\sigma}$ be $\mathcal{F}_0-$measurable and each $\xi_i \in L^2_{\sigma} \cap  W^{m+6,\infty}$ such that $\sum_{i=1}^\infty \norm{\xi_i}_{W^{m+5,\infty}}^2 < \infty$. There exists a unique maximal $W^{m,2}_{\sigma}-$strong solution $(u,\Theta)$ of the equation (\ref{general itoo euler proper}) with the properties that:
\begin{enumerate}
    \item \label{main one} At $\mathbbm{P}-a.e.$ $\omega$ for which $\Theta(\omega)<T$, we have that \begin{equation} \nonumber \int_{0}^{\Theta(\omega)}\norm{u_s(\omega)}_{W^{1,\infty}}ds  = \infty.\end{equation}

   % \item \label{main two} Let $\tau$ be a stopping time such that for $\mathbbm{P}-a.e.$ $\omega$, $\tau(\omega) \in (0,T]$. Then $(u_{\cdot \wedge \tau},\tau)$ is a local $W^{m,2}_{\sigma}-$strong solution of the equation (\ref{general itoo euler proper}) if and only if,  $\mathbbm{P}-a.s.$,
    %\begin{equation} \nonumber \int_{0}^{\tau}\norm{u_s}_{W^{1,\infty}}< \infty.\end{equation}

    %\item For any stopping time $\tau$ such that $(u_{\cdot \wedge \tau}, \tau)$ is a local strong solution of the equation (\ref{general itoo euler proper}), we have that $u_{\cdot \wedge \tau}$ is an adapted process in $W^{m,2}_{\sigma}$ such that for $\mathbbm{P}-a.e.$ $\omega$, $u_{\cdot \wedge \tau}(\omega) \in C\left([0,T];W^{m,2}_{\sigma}\right)$.

    \item \label{main three} For any stopping time $\tau$ such that $(u_{\cdot \wedge \tau},\tau)$ is a local $W^{m,2}_{\sigma}-$strong solution of the equation (\ref{general itoo euler proper}), the identity
    \begin{equation} \nonumber
      u_t = u_0 - \int_0^{t \wedge \tau}\mathcal{P}\mathcal{L}_{u_s}u_s\ ds  + \int_0^{t \wedge \tau} \mathcal{P}\mathcal{G}u_s \circ d\mathcal{W}_s 
\end{equation}
holds $\mathbbm{P}-a.s.$ in $L^2_{\sigma}$ for all $t \in [0,T]$.
\end{enumerate}
\end{theorem}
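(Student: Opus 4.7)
The plan is to construct the solution via a vanishing viscosity argument starting from solutions of the corresponding stochastic Navier-Stokes equations. For each $\nu>0$, the Stokes-space noise estimates of Proposition \ref{main prop} (lines one, two, five, six) combined with the parabolic smoothing $-\nu A u^\nu$ yield the existence of a maximal $W^{m,2}_\sigma$-strong solution $(u^\nu, \Theta^\nu)$ of the corresponding stochastic Navier-Stokes equation, refining the programme of \cite{goodair2024high}; the derivative gain in the hypothesis on $(\xi_i)$ is what permits this first step at level $m$. For each $R>0$, introduce the cut-off stopping time
\begin{equation*}
\tau^\nu_R := T \wedge \Theta^\nu \wedge \inf\left\{t \geq 0 : \int_0^t \norm{u^\nu_s}_{W^{1,\infty}}ds \geq R \text{ or } \norm{u^\nu_t}_{W^{m,2}} \geq R\right\}.
\end{equation*}

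Next, I would apply the It\^{o} formula to $\norm{u^\nu_{\cdot \wedge \tau^\nu_R}}_{W^{m,2}}^2$. The nonlinear contribution is controlled via (\ref{nonlinear term control}) by $c\norm{u^\nu}_{W^{1,\infty}}\norm{u^\nu}_{W^{m,2}}^2$. The combined It\^{o}-Stratonovich corrector and the quadratic variation of the martingale part are of the form (\ref{key noise to control}) and, after summation in $i$ under the assumed decay on $(\xi_i)$, are majorised by $c\norm{u^\nu}_{W^{m,2}}^2$ via the third and fourth lines of Proposition \ref{main prop}. The viscous term contributes nonpositively. Combining Burkholder-Davis-Gundy with a Gr\"{o}nwall argument against the exponential weight $\exp(-c\int_0^t\norm{u^\nu_s}_{W^{1,\infty}}ds)$ produces moment bounds on $\sup_{t\leq T}\norm{u^\nu_{t\wedge\tau^\nu_R}}_{W^{m,2}}^p$ that are uniform in $\nu$ and depend only on $R$ and $\E\norm{u_0}_{W^{m,2}}^p$. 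A standard stochastic compactness argument - tightness of the laws in a path space such as $C([0,T];W^{m-\delta,2}_\sigma)$, the Jakubowski-Skorokhod theorem, and the Gy\"{o}ngy-Krylov lemma - then yields a pathwise strong limit $(u, \tau_R)$ solving (\ref{general itoo euler proper}) up to $\tau_R$. Pathwise uniqueness in $L^2_\sigma$, obtainable by an energy estimate on the difference of two solutions that uses the $k=0$ case of Proposition \ref{main prop}, renders the limit independent of $R$ and of the chosen subsequence; setting $\Theta := \lim_R \tau_R$ defines the maximal solution, with $(\tau_R)$ as announcing sequence and uniqueness in the sense of Definition \ref{v valued maximal unique} following by the usual stitching argument.

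The principal obstacle is precisely the uniform first-hitting control highlighted in the introduction: one needs $\tau^\nu_R$ to be bounded below uniformly in $\nu$ and to converge to the analogous stopping time for the limiting process. Including the $\norm{u^\nu_t}_{W^{m,2}}$-piece in the cut-off is designed to make $\tau^\nu_R$ an a.s.\ continuous functional of the trajectory in $C([0,T];W^{m-\delta,2}_\sigma)$ (justified, for $m \geq 3$, by the embedding $W^{m,2}\hookrightarrow W^{1,\infty}$), allowing the uniform moment bounds of the previous step to close the limit passage. The blow-up criterion \ref{main one} then follows from the structure of the cut-off: a short a posteriori argument shows that on $\{\Theta<T\}$ the $\norm{u_t}_{W^{m,2}}$-piece cannot be the binding constraint, since otherwise pathwise uniqueness would allow the local solution to be restarted beyond $\tau_R$, contradicting maximality.

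Finally, for \ref{main three}, since $u \in C([0,T];W^{m,2}_\sigma)$ with $m\geq 3$ and each $\mathcal{P}\mathcal{G}_i$ is a first-order differential operator mapping $W^{m,2}_\sigma$ into $W^{m-1,2}_\sigma$, the cost-of-a-derivative Stratonovich conversion of \cite{goodair2024stochastic} Theorem 3.4 applies to the It\^{o} identity (\ref{local identity}) and produces the Stratonovich identity in $W^{m-3,2}_\sigma \subset L^2_\sigma$, which is the required conclusion.
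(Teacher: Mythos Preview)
Your vanishing-viscosity route is broadly aligned with the paper, but the paper's execution differs in two substantial ways, and your sketch contains a real gap in the blow-up argument.

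\textbf{Methodology.} The paper does \emph{not} use stochastic compactness (Jakubowski--Skorokhod, Gy\"{o}ngy--Krylov). Instead it runs a direct Cauchy argument in expectation (Proposition~\ref{cauchy prop}), together with a weak-equicontinuity estimate (Lemma~\ref{weak equi lemma}), and then invokes the abstract limit result Proposition~\ref{amazing cauchy lemma} to produce a limiting process and stopping time. Moreover the paper's first-hitting times are defined through $\sup_r\norm{u^n_r}_{W^{m-3,2}}^2$ rather than through $\int_0^t\norm{u^\nu_s}_{W^{1,\infty}}ds$; the Navier--Stokes approximation yields solutions with regularity $W^{m-3,2}_\sigma$ for $u_0\in W^{m,2}_\sigma$, and the lost three derivatives are then recovered by a \emph{second} approximation layer in which the initial condition is smoothed by the spectral projection $\mathcal{P}_n$ (Proposition~\ref{prop relaxing smooth}). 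Your single-layer scheme with a $W^{m,2}$ cut-off does not reflect this structure; in particular your claim that $\tau^\nu_R$ is an a.s.\ continuous functional on $C([0,T];W^{m-\delta,2}_\sigma)$ fails, since the $\norm{u^\nu_t}_{W^{m,2}}$-threshold is not continuous (only lower semicontinuous) in that topology, which is the wrong direction for controlling hitting times from below.

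\textbf{The blow-up criterion.} Your a posteriori argument for item~\ref{main one} is circular: asserting that ``otherwise pathwise uniqueness would allow the local solution to be restarted beyond $\tau_R$'' presupposes that $\norm{u_t}_{W^{m,2}}$ remains finite at the restart time, which is exactly what must be proven. In the paper this is the most delicate step (Lemma~\ref{key blowup}). One first establishes blow-up in $\sup_r\norm{u_r}_{W^{m,2}}$ (Proposition~\ref{penultimate prop}), and then upgrades to the $L^1([0,\Theta);W^{1,\infty})$ criterion by returning to the approximating sequence $(u^n)$, introducing auxiliary times $\gamma^{M+1}_n$ defined through $\int_0^s\norm{u^n_r}_{W^{1,\infty}}dr$, proving the nontrivial convergence $u^n_{\cdot\wedge\tau^R\wedge\gamma^M\wedge\gamma^{M+1}_n}\to u_{\cdot\wedge\tau^R\wedge\gamma^M}$, and closing with the stochastic Gr\"{o}nwall Lemma~\ref{gronny} (which plays the role of your proposed exponential weight). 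The uniform first-hitting control you flag as the principal obstacle is resolved precisely through this mechanism, not through continuity of a single functional.

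Your treatment of item~\ref{main three} via the It\^{o}--Stratonovich conversion of \cite{goodair2024stochastic} matches the paper.
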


Theorem \ref{main existence for euler} will be proven in Section \ref{section strong solutions euler}.

%\begin{equation} \label{projected Ito Salt}
 %   u_t = u_0 - \int_0^t\mathcal{P}\mathcal{L}_{u_s}u_s\ ds - \nu\int_0^t A u_s\, ds + \frac{1}{2}\int_0^t\sum_{i=1}^\infty \mathcal{P}B_i^2u_s ds + \int_0^t \mathcal{P}Bu_s d\mathcal{W}_s 
%\end{equation}
%and
%\begin{equation} \label{projected Ito Salt Euler}
 %   u_t = u_0 - \int_0^t\mathcal{P}\mathcal{L}_{u_s}u_s\ ds  + \frac{1}{2}\int_0^t\sum_{i=1}^\infty \mathcal{P}B_i^2u_s ds + \int_0^t \mathcal{P}Bu_s d\mathcal{W}_s 
%\end{equation}

%\begin{equation} \label{projected Ito transport}
 %   u_t = u_0 - \int_0^t\mathcal{P}\mathcal{L}_{u_s}u_s\ ds - \nu\int_0^t A u_s\, ds + \frac{1}{2}\int_0^t\sum_{i=1}^\infty \left(\mathcal{P}\mathcal{L}_{\xi_i}\right)^2u_s ds + \int_0^t \mathcal{P}\mathcal{L}u_s d\mathcal{W}_s 
%\end{equation}

\section{Estimates on the Noise} \label{section estimates on the noise}

This section is dedicated to the proof of Proposition \ref{main prop}. The case of transport-stretching noise is dealt with first in Subsection \ref{subs transport-stretching estimates}, followed by transport noise in Subsection \ref{subs transport noise estimates}.

\subsection{Transport-Stretching Noise} \label{subs transport-stretching estimates}

We begin with the estimates in Stokes spaces.

\begin{proposition} \label{transport and stretching}
    Fix $m \in \N$. There exists a constant $c$ such that for all $\xi_i \in W^{m+2,\infty} \cap L^2_{\sigma}$ and $f \in W^{m+2,2}_{\sigma}$,
    \begin{align}
   \inner{\mathcal{P}B_i^2f}{f}_{A^{\frac{m}{2}}} +  \norm{\mathcal{P}B_if}_{A^{\frac{m}{2}}}^2  &\leq c\norm{\xi_i}_{W^{m +2,\infty}}^2\norm{f}_{A^{\frac{m}{2}}}^2 \label{bigbound1} ,\\
    \inner{\mathcal{P}B_if}{f}_{A^{\frac{m}{2}}}^2 &\leq c\norm{\xi_i}^2_{W^{m +1,\infty}}\norm{f}^4_{A^{\frac{m}{2}}}. \label{bigbound2}
\end{align}
\end{proposition}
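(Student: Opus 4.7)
The plan is to exploit the two structural facts that $\mathcal{P}$ commutes with any constant-coefficient differential operator on $\T$ and that $\mathcal{P}B_i = \mathcal{P}B_i\mathcal{P}$ (so $\mathcal{P}B_i^2 = (\mathcal{P}B_i)^2$), together with the explicit derivative representations (\ref{odd power inner product})--(\ref{even power inner product}) of the Stokes inner product. These representations let us peel the Leray projector off the leading factors and thereby reduce matters to an expression involving only $B_i$, $B_i^2$, and ordinary partial derivatives --- at which point the strategy is the classical one of extracting a cancellation of the top-order derivative via integration by parts.

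Splitting on the parity of $m$, I first use (\ref{even power inner product}) (respectively (\ref{odd power inner product})) to write
$$\inner{\mathcal{P}B_i^2 f}{f}_{A^{m/2}} = \sum_{\mathbf{k},\mathbf{l}} \inner{D^{\mathbf{k}} \mathcal{P}B_i^2 f}{D^{\mathbf{l}} f},$$
where each $D^{\mathbf{k}}, D^{\mathbf{l}}$ is a product of $\lceil m/2\rceil$ copies of second-order operators (with an additional $\partial_j$ when $m$ is odd). Commuting $\mathcal{P}$ past $D^{\mathbf{k}}$, using that $\mathcal{P}$ is self-adjoint on $L^2$, and noting that $D^{\mathbf{l}} f \in L^2_{\sigma}$ (derivatives of divergence-free, mean-zero torus functions remain so), the projector disappears entirely, leaving $\sum \inner{D^{\mathbf{k}} B_i^2 f}{D^{\mathbf{l}} f}$. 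For $\norm{\mathcal{P}B_i f}_{A^{m/2}}^2$ I do the same move on one side; here the projector survives on $D^{\mathbf{k}} B_i f$ (which is not divergence-free), but that is harmless because the Leibniz expansion of $B_i f = (\xi_i\cdot\nabla)f + \sum_j f^j\nabla\xi_i^j$ is fully controlled in $W^{m,2}$ by $\norm{\xi_i}_{W^{m+1,\infty}}\norm{f}_{W^{m+1,2}}$ via (\ref{boundsonB_i}).

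Once the projector is removed from the top-order slot, I sum the two terms in (\ref{bigbound1}) and rearrange into the symmetric shape the introduction hints at, namely
$$\sum_{\mathbf{k},\mathbf{l}}\bigl(\inner{D^{\mathbf{k}} B_i^2 f}{D^{\mathbf{l}} f} + \inner{D^{\mathbf{k}} B_i f}{D^{\mathbf{l}} B_i f}\bigr) + (\mathbf{k}\leftrightarrow\mathbf{l}).$$
Writing $B_i^2 = B_i\cdot B_i$ and integrating the \emph{outer} $B_i$ by parts via its adjoint $B_i^* = -\mathcal{L}_{\xi_i} + \mathcal{T}_{\xi_i}^*$ moves a copy of $B_i$ across, producing a zero-order correction $\mathcal{T}_{\xi_i}^* + \mathcal{T}_{\xi_i}$ and an expression paired with $D^{\mathbf{l}} f$ that is structurally $\inner{D^{\mathbf{k}} B_i f}{B_i^* D^{\mathbf{l}} f}$; writing $B_i^* D^{\mathbf{l}} f = D^{\mathbf{l}} B_i^* f + [B_i^*, D^{\mathbf{l}}] f$ generates a cancellation with the second sum, leaving only commutators $[D^{\mathbf{k}}, B_i]$ of total order $m$ in $f$ (since $B_i$ is first-order and the commutator drops one derivative). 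These commutators are standard: applying the Leibniz rule shows each is a sum of terms with at most $m+2$ derivatives on $\xi_i$ and $m$ derivatives on $f$, bounded by $\norm{\xi_i}_{W^{m+2,\infty}}\norm{f}_{W^{m,2}}$, and then Cauchy--Schwarz in conjunction with the equivalence of $A^{m/2}$ and $W^{m,2}$ norms on $W^{m,2}_\sigma$ yields (\ref{bigbound1}). For (\ref{bigbound2}) the argument is a one-sided version: moving $\mathcal{P}$ off $\mathcal{P}B_i f$ and integrating the single $B_i$ by parts produces exactly the same commutator-plus-zero-order structure, costing only $m+1$ derivatives on $\xi_i$, and Cauchy--Schwarz on the whole expression then supplies the square.

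The main obstacle, as the introduction flags, is isolating the cancellation at top order: a termwise bound on $\norm{\mathcal{P}B_i f}_{A^{m/2}}^2$ loses a derivative because $B_i$ is first-order, so the two pieces of (\ref{bigbound1}) must be handled \emph{jointly}. The critical technical point will be bookkeeping the symmetric sum after integration by parts so that the highest-order contributions of $\inner{\mathcal{P}B_i^2 f}{f}_{A^{m/2}}$ match, up to a sign, those of $\norm{\mathcal{P}B_i f}_{A^{m/2}}^2$; once that pairing is set up correctly the surviving terms are all commutators that drop an order and can be absorbed into $\norm{f}_{A^{m/2}}^2$ at the cost of two derivatives on $\xi_i$.
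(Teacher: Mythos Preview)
Your approach is the paper's: use (\ref{odd power inner product})--(\ref{even power inner product}) to strip $\mathcal{P}$ (commuting it through derivatives and absorbing it into $f$ on one side, bounding $\norm{\mathcal{P}\,\cdot\,}\leq\norm{\cdot}$ on the other), then exploit $B_i+B_i^*=\mathcal{T}_{\xi_i}+\mathcal{T}_{\xi_i}^*$ in the resulting symmetric sum. This is exactly the content of the paper's Lemma~\ref{lemma for conservation B_i}, which it proves first and then invokes.

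There is, however, a genuine gap in your bookkeeping. The claim that after one integration by parts ``only commutators $[D^{\mathbf{k}},B_i]$ of total order $m$ in $f$'' remain is not correct: you are left with terms of the shape $\inner{D^{\mathbf{k}}B_if}{(\mathcal{T}_{\xi_i}+\mathcal{T}_{\xi_i}^*)D^{\mathbf{l}}f}$ and $\inner{[D^{\mathbf{k}},B_i]B_if}{D^{\mathbf{l}}f}$, both of which pair an order-$(m{+}1)$ expression against an order-$m$ one and do \emph{not} close in $W^{m,2}$ individually. The paper resolves this by a second commutator layer (its terms (\ref{first term}) and (\ref{third term})): for the first type one expands $D^{\mathbf{k}}B_if$ once more, uses that $[\mathcal{T}_{\xi_i},\mathcal{L}_{\xi_i}]$ is zero-order, and observes that the residual $\mathcal{L}_{\xi_i}$ pieces cancel \emph{pairwise under the $\mathbf{k}\leftrightarrow\mathbf{l}$ swap}; for the second type one shows $[B_{D^{\alpha-\alpha'}\xi_i},\mathcal{L}_{\xi_i}]$ is first-order and again pairs with the symmetric partner. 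So the $\mathbf{k}\leftrightarrow\mathbf{l}$ symmetry you flag is needed not only to organise the sum but to actually close it, and this second round is precisely where the $W^{m+2,\infty}$ (rather than $W^{m+1,\infty}$) requirement on $\xi_i$ enters. Your sketch for (\ref{bigbound2}) is fine.
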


The proof of Proposition \ref{transport and stretching} will rely heavily on the following lemma.

\begin{lemma} \label{lemma for conservation B_i}
    Let $\alpha, \beta$ be multi-indices and define $\theta = \abs{\alpha} \vee \abs{\beta}$. There exists a constant $c$ such that, for each $\xi_i \in W^{\theta + 2, \infty} \cap L^2_{\sigma}$ and for all $f \in W^{\theta +2,2}$, we have the bounds
    \begin{align}
   \inner{D^\alpha B_i^2f}{D^\beta f} +  \inner{D^\alpha B_if}{D^\beta B_if} + \inner{D^\beta B_i^2f}{D^\alpha f} +  \inner{D^\beta B_if}{D^\alpha B_if}  &\leq c\norm{\xi_i}_{W^{\theta +2,\infty}}^2\norm{f}_{W^{\theta,2}}^2 \label{otherbound1} ,\\
    \left(\inner{D^{\alpha}B_if}{D^{\beta} f} + \inner{D^{\beta}B_if}{D^{\alpha} f}\right)^2 &\leq c\norm{\xi_i}^2_{W^{\theta +1,\infty}}\norm{f}^4_{W^{\theta ,2}}. \label{otherbound2}
\end{align}
\end{lemma}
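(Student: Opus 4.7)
My plan hinges on two structural facts. First, since $\xi_i$ is divergence-free, so is $D^\gamma \xi_i$ for every multi-index $\gamma$ (as $\textnormal{div}(D^\gamma \xi_i) = D^\gamma\,\textnormal{div}\,\xi_i = 0$), hence $\mathcal{L}_{D^\gamma \xi_i}^* = -\mathcal{L}_{D^\gamma \xi_i}$. Second, from the adjoint formulas in Subsection \ref{subsection transport and stretching}, $B_i^* = -B_i + R_i$ where $R_i := \mathcal{T}_{\xi_i} + \mathcal{T}_{\xi_i}^*$ is symmetric and of order zero, so that $\inner{B_i^2 f}{g} + \inner{B_i f}{B_i g} = \inner{B_i f}{R_i g}$ for all admissible $f,g$. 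The easier bound (\ref{otherbound2}) I would dispatch first: writing $D^\alpha B_i f = \mathcal{L}_{\xi_i} D^\alpha f + (\textrm{terms of order} \leq |\alpha| \textrm{ in } f)$ via Leibniz, the only piece contributing $|\alpha|+1$ derivatives on $f$ is $\mathcal{L}_{\xi_i} D^\alpha f$, and the symmetrised combination $\inner{\mathcal{L}_{\xi_i} D^\alpha f}{D^\beta f} + \inner{\mathcal{L}_{\xi_i} D^\beta f}{D^\alpha f}$ vanishes by skew-adjointness of $\mathcal{L}_{\xi_i}$ together with symmetry of the real inner product. The remaining Leibniz remainders and the order-zero $\mathcal{T}_{\xi_i}$ contribution are then directly absorbed into $c\norm{\xi_i}_{W^{\theta+1,\infty}}\norm{f}_{W^{\theta,2}}^2$, and squaring yields (\ref{otherbound2}).

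For (\ref{otherbound1}) I would split by the parity of $|\alpha|+|\beta|$. Using $\inner{D^\alpha u}{D^\beta v} = (-1)^{|\alpha|}\inner{u}{D^{\alpha+\beta} v}$, one computes
\begin{align*}
\inner{D^\alpha B_i^2 f}{D^\beta f} + \inner{D^\beta B_i^2 f}{D^\alpha f} &= \bigl((-1)^{|\alpha|} + (-1)^{|\beta|}\bigr)\inner{B_i^2 f}{D^{\alpha+\beta} f}, \\
2\inner{D^\alpha B_i f}{D^\beta B_i f} &= 2(-1)^{|\alpha|}\inner{B_i f}{D^{\alpha+\beta} B_i f}.
\end{align*}
In the odd case both vanish: the first by parity mismatch, the second by the elementary fact $\inner{u}{D^\gamma u} = 0$ for $|\gamma|$ odd (proved via IBP plus inner-product symmetry). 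In the even case, write $\epsilon := (-1)^{|\alpha|}$; applying the fundamental identity with $g = D^{\alpha+\beta} f$ and rearranging gives
\[
S \;=\; 2\epsilon\bigl[\inner{B_i f}{R_i D^{\alpha+\beta} f} + \inner{B_i f}{[D^{\alpha+\beta}, B_i] f}\bigr].
\]
The first term I would bound by splitting $D^{\alpha+\beta} = D^\alpha D^\beta$, averaging the two symmetric IBP directions, and invoking the auxiliary cancellation $\inner{\mathcal{L}_{\xi_i} u}{R_i v} + \inner{\mathcal{L}_{\xi_i} v}{R_i u} = -\inner{u}{[\mathcal{L}_{\xi_i}, R_i] v} - \inner{[\mathcal{L}_{\xi_i}, R_i] u}{v}$, in which the order-zero commutator has $L^\infty$ norm controlled by $\norm{\xi_i}_{W^{2,\infty}}^2$.

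The second (commutator) term decomposes via Leibniz as $\sum_{0<\gamma\leq\alpha+\beta}\binom{\alpha+\beta}{\gamma}\inner{B_i f}{\mathcal{L}_{D^\gamma\xi_i} D^{\alpha+\beta-\gamma} f}$ (plus a harmless $[D^{\alpha+\beta}, \mathcal{T}_{\xi_i}]$-contribution, which is of order $|\alpha+\beta|-1$ in $f$). For every $|\gamma|\geq 2$, one IBP using $\mathcal{L}_{D^\gamma\xi_i}^* = -\mathcal{L}_{D^\gamma\xi_i}$ redistributes derivatives so that no factor of $f$ carries more than $\theta$ of them, yielding the claimed bound. The hard part will be the $|\gamma|=1$ terms: naive IBP of $\inner{B_i f}{\mathcal{L}_{\partial_l\xi_i} D^{\alpha+\beta-e_l} f}$ reproduces an expression of the same shape, because the persistent second-order piece $\xi_i^j(\partial_l \xi_i^k)\partial_j\partial_k f$ keeps cycling back under IBP. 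I expect to resolve this by combining the Lie-bracket identity $\mathcal{L}_{\partial_l\xi_i}\mathcal{L}_{\xi_i} - \mathcal{L}_{\xi_i}\mathcal{L}_{\partial_l\xi_i} = \mathcal{L}_{[\partial_l\xi_i,\xi_i]}$ — noting that $[\partial_l\xi_i,\xi_i]$ is again divergence-free, so the bracket operator is skew-adjoint and only first-order in $f$ — with a further symmetric IBP exploiting the $\alpha\leftrightarrow\beta$ invariance of $D^{\alpha+\beta-e_l}$ (splitting it as $D^{\alpha-e_l}D^\beta$ or $D^\alpha D^{\beta-e_l}$ according to whether $\alpha_l$ or $\beta_l$ is nonzero, and averaging when both are). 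Assembling all the pieces should give $|S| \leq c\norm{\xi_i}_{W^{\theta+2,\infty}}^2\norm{f}_{W^{\theta,2}}^2$, which is (\ref{otherbound1}).
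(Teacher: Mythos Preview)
Your handling of \eqref{otherbound2} is correct and essentially identical to the paper's: the Leibniz expansion of $D^\alpha B_i f$ isolates the sole top-order piece $\mathcal{L}_{\xi_i}D^\alpha f$, and the symmetrised pair $\inner{\mathcal{L}_{\xi_i}D^\alpha f}{D^\beta f} + \inner{\mathcal{L}_{\xi_i}D^\beta f}{D^\alpha f}$ vanishes by skew-adjointness.

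For \eqref{otherbound1} your parity observation is elegant and correct --- the odd-parity case is indeed identically zero. However, the even-parity reduction to
\[
2\epsilon\Bigl[\inner{B_if}{R_iD^{\alpha+\beta}f}+\inner{B_if}{[D^{\alpha+\beta},B_i]f}\Bigr]
\]
has a genuine gap in the $\xi_i$-regularity. Expanding the commutator gives $[D^{\alpha+\beta},B_i]f = \sum_{0<\gamma\leq\alpha+\beta}\binom{\alpha+\beta}{\gamma}B_{D^\gamma\xi_i}D^{\alpha+\beta-\gamma}f$, with coefficients $D^\gamma\xi_i$ for $|\gamma|$ up to $|\alpha+\beta|$, which reaches $2\theta$ when $|\alpha|=|\beta|=\theta$. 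Since $B_{D^\gamma\xi_i}$ contains $\nabla D^\gamma\xi_i$, you actually need $\xi_i\in W^{2\theta+1,\infty}$. No subsequent IBP can remove derivatives from the \emph{fixed coefficient} $D^\gamma\xi_i$; it only redistributes derivatives of $f$. So while your argument can close the $f$-derivative count (with more work than you indicate: for $|\gamma|=1$ the Lie-bracket identity alone reproduces a term of the same shape, and one must combine two independent IBP relations --- one via $\mathcal{L}_{\partial_l\xi_i}$ and one via the plain $\partial_l$ --- to get both $A+B$ and $A-B$ controllable), the resulting constant is of order $\norm{\xi_i}_{W^{2\theta+1,\infty}}^2$ rather than $\norm{\xi_i}_{W^{\theta+2,\infty}}^2$. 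For $\theta\geq 3$ this is strictly weaker than the lemma as stated.

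The paper avoids this by never lumping all $|\alpha|+|\beta|$ derivatives on one side. It applies Leibniz directly to $D^\alpha B_i^2 f$ and $D^\alpha B_i f$, so $\xi_i$ absorbs at most $\theta$ derivatives there; uses $B_i^*+B_i=\mathcal{T}_{\xi_i}+\mathcal{T}_{\xi_i}^*$ to kill the top-order pair; and reduces the remainder to the explicit first-order commutators $[\mathcal{T}_{\xi_i},\mathcal{L}_{\xi_i}]$ and $[B_{D^{\alpha-\alpha'}\xi_i},\mathcal{L}_{\xi_i}]$ acting on $D^{\alpha'}f$ with $|\alpha'|<\theta$. The key pairing --- the one your reduction destroys --- is between the leftover term from $D^\alpha B_i^2 f$ and the term $\inner{B_iD^\alpha f}{B_{D^{\beta-\beta'}\xi_i}D^{\beta'}f}$, which combine via $B_i^*$ to produce exactly these commutators at the right order. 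Keeping $D^\alpha$ and $D^\beta$ separate throughout is what pins the $\xi_i$-derivative count at $\theta+2$.
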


\begin{proof}
Let us show the first inequality, for which we start by considering the initial two terms. Observe that 
\begin{align}\label{thenevidently} D^\alpha B_{\xi_i}f = \sum_{\alpha' \leq \alpha}B_{D^{\alpha-\alpha'} \xi_i}D^{\alpha'}f
= \sum_{\alpha' < \alpha}B_{D^{\alpha-\alpha'} \xi_i}D^{\alpha'}f + B_{\xi_i}D^{\alpha}f\end{align}
hence
    \begin{align*}D^\alpha B^2_{\xi_i}f = D^\alpha B_{\xi_i}\big(B_{\xi_i}f\big) = \sum_{\alpha' < \alpha}B_{D^{\alpha-\alpha'} \xi_i}D^{\alpha'}B_{\xi_i}f + B_{ \xi_i}D^{\alpha}B_{\xi_i}f
\end{align*}
 which we apply to those initial two terms, reducing them to
$$\Big\langle \sum_{\alpha' < \alpha}B_{D^{\alpha-\alpha'} \xi_i}D^{\alpha'}B_{\xi_i}f + B_{\xi_i}D^\alpha B_{\xi_i}f,D^\beta f\Big\rangle +  \inner{D^\alpha B_{\xi_i}f}{D^\beta B_{\xi_i}f}.$$ 
We further break this up in terms of the adjoint $B_{\xi_i}^*$, \begin{equation}\label{equitino}\Big\langle \sum_{\alpha' < \alpha} B_{D^{\alpha-\alpha'} \xi_i}D^{\alpha'}B_{\xi_i}f,D^\beta f\Big\rangle + \inner{D^\alpha B_{\xi_i}f}{B^*_{\xi_i} D^\beta f} +  \inner{D^\alpha B_{\xi_i}f}{D^\beta B_{\xi_i}f}\end{equation}
where we sum the second and third inner products and using (\ref{thenevidently}), becoming $$\Big\langle \sum_{\alpha' < \alpha} B_{D^{\alpha-\alpha'} \xi_i}D^{\alpha'}B_{\xi_i}f,D^\beta f\Big\rangle + \Big\langle D^\alpha B_{\xi_i}f,B^*_{\xi_i} D^\beta f + \sum_{\beta' < \beta}B_{D^{\beta-\beta'} \xi_i}D^{\beta'}f + B_{\xi_i}D^\beta f\Big\rangle.$$
Simplify by combining $B_{\xi_i}^*$ and $B_{\xi_i}$, noting that $$B_i^*+B_i= \mathcal{L}_{\xi_i}^* + \mathcal{T}_i^* + \mathcal{L}_{\xi_i} + \mathcal{T}_i = \mathcal{T}_i^* + \mathcal{T}_i,$$ we arrive at the expression
$$\Big\langle \sum_{\alpha' < \alpha}B_{D^{\alpha-\alpha'} \xi_i}D^{\alpha'}B_{\xi_i}f,D^\beta f\Big\rangle + \Big\langle D^\alpha B_{\xi_i}f,\big(\mathcal{T}_{\xi_i} + \mathcal{T}_{\xi_i}^*\big) D^\beta f + \sum_{\beta' < \beta}B_{D^{\beta-\beta'} \xi_i}D^{\beta'}f\Big\rangle.$$ 
As we are looking to achieve control with respect to the $W^{\theta,2}$ norm of $f$, then it is the terms with differential operators of order greater than $\theta$ that concern us. Of course this was the motivating factor behind combining $B_{\xi_i}$ and its adjoint, nullifying the additional derivative coming from $\mathcal{L}_{\xi_i}$. There are more higher order terms to go though, and the strategy will be to write these in terms of commutators with a differential operator of controllable order. This will involve considering different aspects of our sum in tandem, which will be helped with (\ref{thenevidently}) reducing our expression again to  \begin{align*}&\Big\langle \sum_{\alpha' < \alpha}B_{D^{\alpha-\alpha'} \xi_i}D^{\alpha'}B_{\xi_i}f,D^\beta f\Big\rangle\\ & \qquad \qquad \qquad \qquad + \Big\langle \sum_{\alpha' < \alpha} B_{D^{\alpha-\alpha'}\xi_i}D^{\alpha'} f + B_{\xi_i}D^{\alpha}f,\big(\mathcal{T}_{\xi_i} + \mathcal{T}_{\xi_i}^*\big) D^\beta f + \sum_{\beta' < \beta}B_{D^{\beta-\beta'} \xi_i}D^{\beta'}f\Big\rangle.\end{align*} Ultimately the terms in the summand are split up, grouping terms that we will cancel and those of lower order, achieving in total that 
\begin{align} \nonumber
\inner{D^\alpha B_i^2f}{D^\beta f} &+  \inner{D^\alpha B_if}{D^\beta B_if}\\ \nonumber
& \qquad = \inner{B_{\xi_i}D^\alpha f}{\big(\mathcal{T}_{\xi_i}+\mathcal{T}^*_{\xi_i}\big) D^\beta f}\\
    \nonumber & \qquad + \Big\langle \sum_{\alpha' < \alpha} B_{D^{\alpha-\alpha'} \xi_i}D^{\alpha'}f ,\big(\mathcal{T}_{\xi_i} + \mathcal{T}_{\xi_i}^*\big) D^\beta f + \sum_{\beta' < \beta} B_{D^{\beta-\beta'} \xi_i}D^{\beta'}f\Big\rangle\\
    \nonumber & \qquad +  \sum_{\alpha' < \alpha}\inner{B_{D^{\alpha-\alpha'} \xi_i}D^{\alpha'}B_{\xi_i}f}{D^\beta f} + \sum_{\beta' < \beta}\inner{B_{\xi_i}D^\alpha f}{B_{D^{\beta-\beta'} \xi_i}D^{\beta'}f}.
\end{align}
From this we deduce an expression for
\begin{equation} \label{the expression}\inner{D^\alpha B_i^2f}{D^\beta f} +  \inner{D^\alpha B_if}{D^\beta B_if} + \inner{D^\beta B_i^2f}{D^\alpha f} +  \inner{D^\beta B_if}{D^\alpha B_if}\end{equation}
by interchanging the roles of $\alpha,\beta$ and summing the result. That is, (\ref{the expression}) is given by the sum of a first expression
\begin{align}
   \label{first term} \inner{B_{\xi_i}D^\alpha f}{\big(\mathcal{T}_{\xi_i}+\mathcal{T}^*_{\xi_i}\big) D^\beta f} + \inner{B_{\xi_i}D^\beta f}{\big(\mathcal{T}_{\xi_i}+\mathcal{T}^*_{\xi_i}\big) D^\alpha f},
\end{align}
a second expression
\begin{align}
  \nonumber  \Big\langle \sum_{\alpha' < \alpha} B_{D^{\alpha-\alpha'} \xi_i}D^{\alpha'}f &,\big(\mathcal{T}_{\xi_i} + \mathcal{T}_{\xi_i}^*\big) D^\beta f + \sum_{\beta' < \beta} B_{D^{\beta-\beta'} \xi_i}D^{\beta'}f\Big\rangle\\ &+ \Big\langle \sum_{\beta' < \beta} B_{D^{\beta-\beta'} \xi_i}D^{\beta'}f ,\big(\mathcal{T}_{\xi_i} + \mathcal{T}_{\xi_i}^*\big) D^\alpha f + \sum_{\alpha' < \alpha} B_{D^{\alpha-\alpha'} \xi_i}D^{\alpha'}f\Big\rangle \label{second term}
\end{align}
and a third expression
\begin{align}
    \nonumber \sum_{\alpha' < \alpha}\inner{B_{D^{\alpha-\alpha'} \xi_i}D^{\alpha'}B_{\xi_i}f}{D^\beta f} &+ \sum_{\beta' < \beta}\inner{B_{\xi_i}D^\alpha f}{B_{D^{\beta-\beta'} \xi_i}D^{\beta'}f} \\&+ \sum_{\beta' < \beta}\inner{B_{D^{\beta-\beta'} \xi_i}D^{\beta'}B_{\xi_i}f}{D^\alpha f} + \sum_{\alpha' < \alpha}\inner{B_{\xi_i}D^\beta f}{B_{D^{\alpha-\alpha'} \xi_i}D^{\alpha'}f} \label{third term}
\end{align}
which we control individually. Firstly for a treatment of (\ref{first term}), in the first term we have that
    \begin{align*} &\inner{B_{\xi_i}D^\alpha f}{\big(\mathcal{T}_{\xi_i}+\mathcal{T}^*_{\xi_i}\big) D^\beta f}\\ & \quad =\inner{(\mathcal{L}_{\xi_i} + \mathcal{T}_{\xi_i})D^{\alpha}f}{(\mathcal{T}_{\xi_i}^* + \mathcal{T}_{\xi_i})D^{\beta}f}\\ & \quad = \inner{\mathcal{L}_{\xi_i}D^{\alpha}f}{\mathcal{T}_{\xi_i}^*D^{\beta}f} + \inner{\mathcal{L}_{\xi_i}D^{\alpha}f}{\mathcal{T}_{\xi_i}D^{\beta}f} + \inner{\mathcal{T}_{\xi_i}D^{\alpha}f}{\mathcal{T}_{\xi_i}^*D^{\beta}f} + \inner{\mathcal{T}_{\xi_i}D^{\alpha}f}{\mathcal{T}_{\xi_i}D^{\beta}f}\\
    & \quad = \Big(\inner{\mathcal{T}_{\xi_i}\mathcal{L}_{\xi_i}D^{\alpha}f}{D^{\beta}f} + \inner{\mathcal{L}_{\xi_i}D^{\alpha}f}{\mathcal{T}_{\xi_i}D^{\beta}f}\Big) + \Big(\inner{\mathcal{T}_{\xi_i}^2D^{\alpha}f}{D^{\beta}f} + \inner{\mathcal{T}_{\xi_i}D^{\alpha}f}{\mathcal{T}_{\xi_i}D^{\beta}f}\Big).
    \end{align*}

Immediately from (\ref{T_ibound}) we obtain that
$$ \abs{ \inner{\mathcal{T}_{\xi_i}^2D^{\alpha}f}{D^{\beta}f} + \inner{\mathcal{T}_{\xi_i}D^{\alpha}f}{\mathcal{T}_{\xi_i}D^{\beta}f}} \leq c\norm{\xi_i}_{W^{1,\infty}}^2\norm{f}_{W^{\theta,2}}^2$$
which we apply twice to verify the control
\begin{align} \nonumber (\ref{first term}) &\leq  \Big(\inner{\mathcal{T}_{\xi_i}\mathcal{L}_{\xi_i}D^{\alpha}f}{D^{\beta}f} + \inner{\mathcal{L}_{\xi_i}D^{\alpha}f}{\mathcal{T}_{\xi_i}D^{\beta}f}\Big)\\ & \qquad + \Big(\inner{\mathcal{T}_{\xi_i}\mathcal{L}_{\xi_i}D^{\alpha}f}{D^{\beta}f} + \inner{\mathcal{L}_{\xi_i}D^{\alpha}f}{\mathcal{T}_{\xi_i}D^{\beta}f}\Big) + c\norm{\xi_i}_{W^{1,\infty}}^2\norm{f}_{W^{\theta,2}}^2. \label{appliedtwice}
\end{align}
Now for the first bracket, we add and subtract a term to have an expression through the commutator of the operators:
    \begin{align}
    \nonumber &\inner{\mathcal{T}_{\xi_i}\mathcal{L}_{\xi_i}D^{\alpha}f}{D^{\beta}f} + \inner{\mathcal{L}_{\xi_i}D^{\alpha}f}{\mathcal{T}_{\xi_i}D^{\beta}f}\\ \nonumber & \qquad = \inner{(\mathcal{T}_{\xi_i}\mathcal{L}_{\xi_i} - \mathcal{L}_{\xi_i}\mathcal{T}_{\xi_i}) D^{\alpha}f}{D^{\beta}f} + \inner{\mathcal{L}_{\xi_i}\mathcal{T}_{\xi_i}D^{\alpha}f}{D^{\beta}f} + \inner{\mathcal{L}_{\xi_i}D^{\alpha}f}{\mathcal{T}_{\xi_i}D^{\beta}f}\\ \label{analignedthing}
    & \qquad = \inner{(\mathcal{T}_{\xi_i}\mathcal{L}_{\xi_i} - \mathcal{L}_{\xi_i}\mathcal{T}_{\xi_i}) D^{\alpha}f}{D^{\beta}f} + \inner{\mathcal{T}_{\xi_i}D^{\alpha}f}{\mathcal{L}_{\xi_i}^*D^{\beta}f} + \inner{\mathcal{L}_{\xi_i}D^{\alpha}f}{\mathcal{T}_{\xi_i}D^{\beta}f}.
    \end{align}
 The commutator is given explicitly on a function $g$ by
    \begin{align*}
        \mathcal{T}_{\xi_i}\mathcal{L}_{\xi_i}g = \mathcal{T}_{\xi_i}\Big(\sum_{j=1}^3\xi_i^j\partial_jg\Big)
        = \sum_{k=1}^3\Big(\sum_{j=1}^3\xi_i^j\partial_jg\Big)^k\nabla \xi_i^k
        = \sum_{k=1}^3\sum_{j=1}^3\xi_i^j\partial_jg^k\nabla \xi_i^k
    \end{align*}
    and
    \begin{align*}
        \mathcal{L}_{\xi_i}\mathcal{T}_{\xi_i}g &= \mathcal{L}_{\xi_i}\Big(\sum_{k=1}^3g^k\nabla \xi_i^k\Big)
        = \sum_{j=1}^3\xi_i^j\partial_j\Big(\sum_{k=1}^3g^k\nabla \xi_i^k\Big)
        = \sum_{j=1}^3\sum_{k=1}^3\xi_i^j\partial_j \big(g^k\nabla \xi_i^k\big)\\
        &= \sum_{j=1}^3\sum_{k=1}^3\Big(\xi_i^j\partial_jg^k \nabla \xi_i^k + \xi_i^jg^k\partial_j\nabla\xi_i^k\Big)
    \end{align*}
    such that $$(\mathcal{T}_{\xi_i}\mathcal{L}_{\xi_i} - \mathcal{L}_{\xi_i}\mathcal{T}_{\xi_i}) g = -\sum_{j=1}^3\sum_{k=1}^3\xi_i^jg^k\partial_j\nabla\xi_i^k$$
which is of zeroth order in $g$ as required. Therefore, we obtain
$$\abs{\inner{(\mathcal{T}_{\xi_i}\mathcal{L}_{\xi_i} - \mathcal{L}_{\xi_i}\mathcal{T}_{\xi_i}) D^{\alpha}f}{D^{\beta}f}} \leq c\norm{\xi_i}_{W^{2,\infty}}^2\norm{f}_{W^{\theta,2}}^2.$$
Returning to (\ref{analignedthing}), by twice applying this bound we have that
\begin{align*}
      &\inner{\mathcal{T}_{\xi_i}\mathcal{L}_{\xi_i}D^{\alpha}f}{D^{\beta}f} + \inner{\mathcal{L}_{\xi_i}D^{\alpha}f}{\mathcal{T}_{\xi_i}D^{\beta}f} + \inner{\mathcal{T}_{\xi_i}\mathcal{L}_{\xi_i}D^{\beta}f}{D^{\alpha}f} + \inner{\mathcal{L}_{\xi_i}D^{\beta}f}{\mathcal{T}_{\xi_i}D^{\alpha}f}\\ 
    & \qquad  \leq c\norm{\xi_i}_{W^{2,\infty}}^2\norm{f}_{W^{\theta,2}}^2 + \inner{\mathcal{T}_{\xi_i}D^{\alpha}f}{\mathcal{L}_{\xi_i}^*D^{\beta}f} + \inner{\mathcal{L}_{\xi_i}D^{\alpha}f}{\mathcal{T}_{\xi_i}D^{\beta}f}\\ & \qquad \qquad \qquad \quad \qquad \qquad + \inner{\mathcal{T}_{\xi_i}D^{\beta}f}{\mathcal{L}_{\xi_i}^*D^{\alpha}f} + \inner{\mathcal{L}_{\xi_i}D^{\beta}f}{\mathcal{T}_{\xi_i}D^{\alpha}f}.
\end{align*}
Using once more that $\mathcal{L}_{\xi_i}^* = -\mathcal{L}_{\xi_i}$, we observe complete cancellation in the four remaining inner products. Altogether, returning to (\ref{appliedtwice}), we have verified that
$$(\ref{first term}) \leq  c\norm{\xi_i}_{W^{2,\infty}}^2\norm{f}_{W^{\theta,2}}^2.$$
We now move on to the next expression, (\ref{second term}), which is only of order $\theta$ in $f$ by design; through Cauchy-Schwarz and the bound (\ref{boundsonB_i}), we happily deduce that
$$(\ref{second term}) \leq c\norm{\xi_i}_{W^{\theta +1,\infty}}^2\norm{f}_{W^{\theta,2}}^2.$$
To bound (\ref{the expression}), it now only remains to show a sufficient control on (\ref{third term}). Combining the sums, we have that
\begin{align*}
    (\ref{third term}) = &\sum_{\alpha' < \alpha}\inner{D^{\beta}f}{B_{D^{\alpha-\alpha'} \xi_i}D^{\alpha'}B_{\xi_i}f + B_{\xi_i}^*B_{D^{\alpha - \alpha'}\xi_i}D^{\alpha'}f}\\ & + \sum_{\beta' < \beta}\inner{D^{\alpha}f}{B_{D^{\beta-\beta'} \xi_i}D^{\beta'}B_{\xi_i}f + B_{\xi_i}^*B_{D^{\beta - \beta'}\xi_i}D^{\beta'}f}.
\end{align*}
We consider a general term in the first summand, \begin{equation} \label{rewriteofprime}\inner{D^\beta f}{B_{D^{\alpha-\alpha'}\xi_i}D^{\alpha'}B_{\xi_i}f + B^*_{\xi_i}B_{D^{\alpha-\alpha'} \xi_i}D^{\alpha'}f}\end{equation} and employing (\ref{thenevidently}) again we see this becomes \begin{align*}& \Big\langle D^\beta f,B_{D^{\alpha-\alpha'}\xi_i}\bigg(\sum_{\gamma < \alpha'}B_{D^{\alpha'-\gamma}\xi_i}D^\gamma f + B_{\xi_i}D^{\alpha'}f\bigg)+ B^*_{\xi_i}B_{D^{\alpha-\alpha'} \xi_i}D^{\alpha'}f\Big\rangle\\
= & \Big\langle D^\beta f,\sum_{\gamma < \alpha'}B_{D^{\alpha-\alpha'}\xi_i}B_{D^{\alpha'-\gamma}\xi_i}D^\gamma f\Big\rangle + \inner{D^\beta f}{B_{D^{\alpha-\alpha'}\xi_i}B_{\xi_i}D^{\alpha'}f + B^*_{\xi_i}B_{D^{\alpha-\alpha'} \xi_i}D^{\alpha'}f}.  \end{align*}
We have split up these terms to make our approach clearer, as the two will be considered separately. Indeed the first term is now of order $\theta$, so the familiar Cauchy-Schwarz and (\ref{boundsonB_i}) establishes that
\begin{equation} \label{standardcontrol} \Big\langle D^\beta f,\sum_{\gamma < \alpha'}B_{D^{\alpha-\alpha'}\xi_i}B_{D^{\alpha'-\gamma}\xi_i}D^\gamma f\Big\rangle \leq c\norm{\xi_i}_{W^{\theta + 1,\infty}}^2\norm{f}_{W^{\theta,2}}^2.\end{equation}
As for the second inner product, we rewrite the right side as $$B_{D^{\alpha-\alpha'} \xi_i}\big((\mathcal{L}_{\xi_i} + \mathcal{T}_{\xi_i})D^{\alpha'}f\big) + (\mathcal{L}^*_{\xi_i} + \mathcal{T}^*_{\xi_i})B_{D^{\alpha-\alpha'} \xi_i}D^{\alpha'}f$$ and further \begin{equation}\label{theonewiththecommutator}\big(B_{D^{\alpha -\alpha'} \xi_i}\mathcal{L}_{\xi_i} - \mathcal{L}_{\xi_i}B_{D^{\alpha-\alpha'} \xi_i}\big)D^{\alpha'}f + B_{D^{\alpha-\alpha'} \xi_i}\mathcal{T}_{\xi_i}D^{\alpha'}f + \mathcal{T}_{\xi_i}^*B_{D^{\alpha-\alpha'} \xi_i}D^{\alpha'}f.\end{equation}
The latter two terms are once more of order $\theta$, leading to a control as in (\ref{standardcontrol}). Now we show explicitly that the commutator from (\ref{theonewiththecommutator}), acting on a function $g$,
\begin{equation} \label{theactualcommutator}
    (B_{D^{\alpha -\alpha'} \xi_i}\mathcal{L}_{\xi_i} - \mathcal{L}_{\xi_i}B_{D^{\alpha-\alpha'} \xi_i})g
\end{equation}
 is of first order, through the expressions
\begin{align*}
    B_{D^{\alpha-\alpha'} \xi_i}\mathcal{L}_{\xi_i}g &= \sum_{j=1}^3\bigg(D^{\alpha-\alpha'}\xi_i^j \partial_j\Big(\sum_{k=1}^3\xi_i^k\partial_k g\Big) + \Big(\sum_{k=1}^3\xi_i^k\partial_k g\Big)^j\nabla D^{\alpha-\alpha'} \xi_i^j\bigg)\\
    &= \sum_{j=1}^3\sum_{k=1}^3\bigg(D^{\alpha-\alpha'}\xi_i^j\partial_j\xi_i^k\partial_kg  + D^{\alpha-\alpha'}\xi_i^j\xi_i^k\partial_j\partial_kg + \xi_i^k\partial_k g^j \nabla D^{\alpha-\alpha'} \xi_i^j\bigg) 
\end{align*}
and
\begin{align*}
    &\mathcal{L}_{\xi_i}B_{D^{\alpha-\alpha'} \xi_i}g\\ &= \sum_{k=1}^3\xi_i^k\partial_k\bigg(\sum_{j=1}^3D^{\alpha-\alpha'} \xi_i^j\partial_jg + g^j \nabla D^{\alpha-\alpha'} \xi_i^j\bigg)\\
    &= \sum_{j=1}^3\sum_{k=1}^3\bigg(\xi_i^k\partial_kD^{\alpha-\alpha'} \xi_i^j \partial_jg + \xi_i^kD^{\alpha-\alpha'}\xi_i^j\partial_k\partial_jg  + \xi_i^k\partial_kg^j \nabla D^{\alpha-\alpha'} \xi_i^j + \xi_i^kg^j\partial_k \nabla D^{\alpha-\alpha'} \xi_i^j\bigg)
\end{align*}
such that
\begin{equation} \nonumber (\ref{theactualcommutator}) = \sum_{j=1}^3\sum_{k=1}^3\bigg(D^{\alpha-\alpha'}\xi_i^j\partial_j\xi_i^k\partial_kg - \xi_i^k\partial_kD^{\alpha-\alpha'} \xi_i^j \partial_jg - \xi_i^kg^j\partial_k \nabla D^{\alpha-\alpha'} \xi_i^j\bigg).\end{equation}
Due to this expression, we readily obtain that
$$\inner{D^{\alpha}f}{\big(B_{D^{\alpha -\alpha'} \xi_i}\mathcal{L}_{\xi_i} - \mathcal{L}_{\xi_i}B_{D^{\alpha-\alpha'} \xi_i}\big)D^{\alpha'}f} \leq c\norm{\xi_i}_{W^{\theta + 2,\infty}}^2\norm{f}_{W^{\theta,2}}^2.$$
In total then, we have verified that
$$(\ref{rewriteofprime}) \leq  c\norm{\xi_i}_{W^{\theta + 2,\infty}}^2\norm{f}_{W^{\theta,2}}^2$$
and as a consequence the same bound on (\ref{third term}), from which we deduce the first inequality of the Proposition. For the second inequality and to conclude the proof, using (\ref{thenevidently}) once more, we see that
    \begin{align*}
        \inner{D^\alpha B_i f}{D^\beta f} &= \Big \langle \sum_{\alpha' < \alpha}B_{D^{\alpha-\alpha'}\xi_i}D^{\alpha'}f + B_{\xi_i}D^\alpha f, D^\beta f \Big\rangle\\
        &= \Big \langle \sum_{\alpha' < \alpha}B_{D^{\alpha-\alpha'}\xi_i}D^{\alpha'}f , D^\beta f \Big\rangle + \inner{\mathcal{L}_{\xi_i}D^\alpha f}{D^\beta f} + \inner{\mathcal{T}_{\xi_i}D^\alpha f}{D^\beta f}
    \end{align*}
therefore
\begin{align*}
    &\inner{D^\alpha B_i f}{D^\beta f} + \inner{D^\beta B_i f}{D^\alpha f}\\
    &= \Big \langle \sum_{\alpha' < \alpha}B_{D^{\alpha-\alpha'}\xi_i}D^{\alpha'}f , D^\beta f \Big\rangle + \Big \langle \sum_{\beta' < \beta}B_{D^{\beta-\beta'}\xi_i}D^{\beta'}f , D^\alpha f \Big\rangle + \inner{\mathcal{T}_{\xi_i}D^\alpha f}{D^\beta f} + \inner{\mathcal{T}_{\xi_i}D^\beta f}{D^\alpha f} \\& \quad +  \inner{\mathcal{L}_{\xi_i}D^\alpha f}{D^\beta f} + \inner{\mathcal{L}_{\xi_i}D^\beta f}{D^\alpha f}.
\end{align*}
In the bottom line we observe complete cancellation,
$$\inner{\mathcal{L}_{\xi_i}D^\alpha f}{D^\beta f} + \inner{\mathcal{L}_{\xi_i}D^\beta f}{D^\alpha f} = -\inner{D^\alpha f}{\mathcal{L}_{\xi_i}D^\beta f} + \inner{\mathcal{L}_{\xi_i}D^\beta f}{D^\alpha f} = 0.$$
The remaining terms are again constructed to be of order $\theta$, such that Cauchy-Schwarz and (\ref{boundsonB_i}) yield the result. 

\end{proof}

We are now set up to prove Proposition \ref{transport and stretching}.

\begin{proof}[Proof of Proposition \ref{transport and stretching}:]
    Let us first consider (\ref{bigbound1}), assuming for now that $m$ is even and using the representation (\ref{even power inner product}), we have that
    \begin{align*}
        \inner{\mathcal{P}B_i^2f}{f}_{A^{\frac{m}{2}}} &= \sum_{k_1=1}^3 \dots \sum_{k_{\frac{m}{2}}=1}^3\sum_{l_1=1}^3 \dots \sum_{l_{\frac{m}{2}}=1}^3\inner{\partial_{k_1}^2 \dots \partial_{k_{\frac{m}{2}}}^2\mathcal{P}B_i^2f}{\partial_{l_1}^2 \dots \partial_{l_{\frac{m}{2}}}^2f}\\
        &= \sum_{k_1=1}^3 \dots \sum_{k_{\frac{m}{2}}=1}^3\sum_{l_1=1}^3 \dots \sum_{l_{\frac{m}{2}}=1}^3\inner{\partial_{k_1}^2 \dots \partial_{k_{\frac{m}{2}}}^2 B_i^2f}{\partial_{l_1}^2 \dots \partial_{l_{\frac{m}{2}}}^2f}
    \end{align*}
    having commuted $\mathcal{P}$ with derivatives, taken it to the other side, commuted through the derivatives again and absorbed into $f$. For the other term,
    \begin{align*}
        \norm{\mathcal{P}B_if}_{A^{\frac{m}{2}}}^2 = \norm{\mathcal{P}\Delta^{\frac{m}{2}}\mathcal{P}B_if}^2 = \norm{\mathcal{P}\Delta^{\frac{m}{2}}B_if}^2 \leq \norm{\Delta^{\frac{m}{2}}B_if}^2
    \end{align*}
    having used that $\mathcal{P}$ is bounded on $L^2\left(\mathbb{T}^3;\R^3\right)$. Expressing this as in (\ref{even power inner product}), we ultimately have that
    \begin{align*}
        &\inner{\mathcal{P}B_i^2f}{f}_{A^{\frac{m}{2}}} + \norm{\mathcal{P}B_if}_{A^{\frac{m}{2}}}^2\\
        &\leq \sum_{k_1=1}^3 \dots \sum_{k_{\frac{m}{2}}=1}^3\sum_{l_1=1}^3 \dots \sum_{l_{\frac{m}{2}}=1}^3\left(\inner{\partial_{k_1}^2 \dots \partial_{k_{\frac{m}{2}}}^2 B_i^2f}{\partial_{l_1}^2 \dots \partial_{l_{\frac{m}{2}}}^2f} + \inner{\partial_{k_1}^2 \dots \partial_{k_{\frac{m}{2}}}^2 B_if}{\partial_{l_1}^2 \dots \partial_{l_{\frac{m}{2}}}^2B_if} \right)
    \end{align*}
which admits a bound from (\ref{otherbound1}) as each term in the summand is of the form $\inner{D^\alpha B_i^2f}{D^\beta f} +  \inner{D^\alpha B_if}{D^\beta B_if}$, and can be paired with a corresponding term $\inner{D^\beta B_i^2f}{D^\alpha f} +  \inner{D^\beta B_if}{D^\alpha B_if}$. Where $m$ is odd the first term is simplified similarly with (\ref{odd power inner product}), and in the second term we use from (\ref{odd power inner product}) that
\begin{align*}
    \norm{\mathcal{P}B_if}_{A^{\frac{m}{2}}}^2 = \sum_{j=1}^3\norm{\partial_jA^{\frac{m-1}{2}}\mathcal{P}B_if}^2 =  \sum_{j=1}^3\norm{\mathcal{P}\partial_j\Delta^{\frac{m-1}{2}}B_if}^2 \leq \sum_{j=1}^3\norm{\partial_j\Delta^{\frac{m-1}{2}}B_if}^2.
\end{align*}
The control now follows as in the even case, and we conclude (\ref{bigbound1}). The second inequality (\ref{bigbound2}) similarly holds as a consequence of (\ref{otherbound2}), using the representations (\ref{odd power inner product}), (\ref{even power inner product}) and passing $\mathcal{P}$ onto $f$.

\end{proof}

Estimates in the usual Sobolev norm are given below.

\begin{proposition} \label{transport and stretching 2}
    Fix $m \in \N$. There exists a constant $c$ such that for all $\xi_i \in W^{m+2,\infty} \cap L^2_{\sigma}$ and $f \in W^{m+2,2}_{\sigma}$,
    \begin{align*}
   \inner{\mathcal{P}B_i^2f}{f}_{W^{m,2}} +  \norm{\mathcal{P}B_if}_{W^{m,2}}^2  &\leq c\norm{\xi_i}_{W^{m +2,\infty}}^2\norm{f}_{W^{m,2}}^2 ,\\
    \inner{\mathcal{P}B_if}{f}_{W^{m,2}}^2 &\leq c\norm{\xi_i}^2_{W^{m +1,\infty}}\norm{f}^4_{W^{m,2}}.
\end{align*}
\end{proposition}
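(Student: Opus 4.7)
The plan is to reduce everything to Lemma~\ref{lemma for conservation B_i} with $\alpha = \beta$, and then sum over multi-indices with $|\alpha| \leq m$. The argument is structurally simpler than Proposition~\ref{transport and stretching} because the $W^{m,2}$ norm is already assembled from the derivatives $D^\alpha$, so no passage through the representations (\ref{odd power inner product}), (\ref{even power inner product}) is needed. The three key facts I will use repeatedly are: on $\T$ the Leray projector $\mathcal{P}$ commutes with every $D^\alpha$; $\mathcal{P}$ is a self-adjoint contraction on $L^2$; and $\mathcal{P}f = f$ since $f \in W^{m,2}_\sigma$.

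For the first bound, I would expand
$$
\inner{\mathcal{P}B_i^2 f}{f}_{W^{m,2}} + \norm{\mathcal{P}B_i f}_{W^{m,2}}^2 = \sum_{|\alpha| \leq m}\left(\inner{D^\alpha \mathcal{P}B_i^2 f}{D^\alpha f} + \norm{D^\alpha \mathcal{P}B_i f}^2\right).
$$
Commuting $\mathcal{P}$ with $D^\alpha$ and using self-adjointness and $\mathcal{P}f=f$, the first summand equals $\inner{D^\alpha B_i^2 f}{D^\alpha f}$; the second is dominated by $\norm{D^\alpha B_i f}^2$ since $\mathcal{P}$ is a contraction on $L^2$. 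Applying the first bound of Lemma~\ref{lemma for conservation B_i} with $\beta = \alpha$ (so $\theta = |\alpha|$) gives
$$
2\inner{D^\alpha B_i^2 f}{D^\alpha f} + 2\norm{D^\alpha B_i f}^2 \leq c\norm{\xi_i}_{W^{|\alpha|+2,\infty}}^2\norm{f}_{W^{|\alpha|,2}}^2 \leq c\norm{\xi_i}_{W^{m+2,\infty}}^2\norm{f}_{W^{m,2}}^2,
$$
and summing over the finitely many multi-indices with $|\alpha| \leq m$ concludes the first inequality.

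For the second inequality, the same manipulations produce
$$
\inner{\mathcal{P}B_i f}{f}_{W^{m,2}} = \sum_{|\alpha|\leq m}\inner{D^\alpha B_i f}{D^\alpha f}.
$$
The second bound of Lemma~\ref{lemma for conservation B_i} with $\beta=\alpha$ yields $|\inner{D^\alpha B_i f}{D^\alpha f}| \leq c\norm{\xi_i}_{W^{|\alpha|+1,\infty}}\norm{f}_{W^{|\alpha|,2}}^2$; summing over the finite index set and squaring delivers the claimed bound with $\norm{\xi_i}_{W^{m+1,\infty}}^2\norm{f}_{W^{m,2}}^4$ on the right. There is no genuine obstacle; the entire analytical content lies in Lemma~\ref{lemma for conservation B_i}, and this proposition is just its repackaging in the standard Sobolev norm, with the Leray projector harmlessly shuffled onto $f$ using $\mathcal{P}B_i = \mathcal{P}B_i\mathcal{P}$ and $\mathcal{P}f = f$.
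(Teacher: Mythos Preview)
Your proposal is correct and is essentially the paper's own proof: the paper states that the argument is ``entirely contained in what has come before, treating the Leray Projector as in Proposition~\ref{transport and stretching} and simply taking $\alpha=\beta$ in Lemma~\ref{lemma for conservation B_i}'', which is exactly what you do. Your handling of $\mathcal{P}$ via commutation with $D^\alpha$, self-adjointness, $\mathcal{P}f=f$, and the contraction bound $\norm{D^\alpha\mathcal{P}B_if}^2\leq\norm{D^\alpha B_if}^2$ mirrors the treatment in Proposition~\ref{transport and stretching}.
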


\begin{proof}
    The proof is now entirely contained in what has come before, treating the Leray Projector as in Proposition \ref{transport and stretching} and simply taking $\alpha=\beta$ in Lemma \ref{lemma for conservation B_i}. 
\end{proof}

\subsection{Transport Noise} \label{subs transport noise estimates}

We follow the same structure as the previous subsection, now for the estimates on transport noise.

\begin{proposition} \label{first transport only}
        Fix $m \in \N$. There exists a constant $c$ such that for all $\xi_i \in W^{m+2,\infty} \cap L^2_{\sigma}$ and $f \in W^{m+2,2}_{\sigma}$,
    \begin{align}
   \inner{\left(\mathcal{P}\mathcal{L}_{\xi_i}\right)^2f}{f}_{A^{\frac{m}{2}}} +  \norm{\mathcal{P}\mathcal{L}_{\xi_i}f}_{A^{\frac{m}{2}}}^2  &\leq c\norm{\xi_i}_{W^{m +2,\infty}}^2\norm{f}_{A^{\frac{m}{2}}}^2 \label{bigbound1Li} ,\\
    \inner{\mathcal{P}\mathcal{L}_{\xi_i}f}{f}_{A^{\frac{m}{2}}}^2 &\leq c\norm{\xi_i}^2_{W^{m,\infty}}\norm{f}^4_{A^{\frac{m}{2}}}. \label{bigbound2Li}
\end{align}
\end{proposition}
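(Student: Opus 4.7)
The plan is to reduce the transport case to Proposition~\ref{transport and stretching} by exploiting $\mathcal{L}_{\xi_i} = B_i - \mathcal{T}_{\xi_i}$, with $\mathcal{T}_{\xi_i}$ treated as a zero-order perturbation via (\ref{T_ibound}). The combined quantity $\inner{(\mathcal{P}\mathcal{L}_{\xi_i})^2 f}{f}_{A^{m/2}} + \norm{\mathcal{P}\mathcal{L}_{\xi_i}f}^2_{A^{m/2}}$ must be handled as a single unit, since an individual estimate on $\norm{\mathcal{P}B_if}^2_{A^{m/2}}$ in terms of $\norm{f}^2_{A^{m/2}}$ alone is not available: it is precisely the combined cancellation in Proposition~\ref{transport and stretching} that avoids the extra derivative on $f$.

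Using $\mathcal{P}B_i = \mathcal{P}B_i\mathcal{P}$, the key algebraic step is
\[
(\mathcal{P}\mathcal{L}_{\xi_i})^2 = \mathcal{P}B_i^2 - \mathcal{P}B_i\mathcal{P}\mathcal{T}_{\xi_i} - \mathcal{P}\mathcal{T}_{\xi_i}\mathcal{P}B_i + (\mathcal{P}\mathcal{T}_{\xi_i})^2
\]
on $L^2_\sigma$. Combined with the expansion of $\norm{\mathcal{P}\mathcal{L}_{\xi_i}f}^2_{A^{m/2}}$ obtained from $\mathcal{P}\mathcal{L}_{\xi_i}f = \mathcal{P}B_if - \mathcal{P}\mathcal{T}_{\xi_i}f$, the target sum organises into: (i) the leading $B_i$-pair $\inner{\mathcal{P}B_i^2 f}{f}_{A^{m/2}} + \norm{\mathcal{P}B_if}^2_{A^{m/2}}$, bounded by Proposition~\ref{transport and stretching}; (ii) the pure-$\mathcal{T}$ pair, bounded directly via Cauchy--Schwarz, the norm equivalence on $W^{m,2}_\sigma$, and (\ref{T_ibound}); and (iii) the cross terms involving one $B_i$ and one $\mathcal{T}_{\xi_i}$.

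The cross terms (iii) are the main obstacle. Converting to sums over derivative multi-indices via (\ref{odd power inner product})--(\ref{even power inner product}), I would first use that $B_i$ preserves gradients to reduce $\sum\inner{D^\alpha B_i\mathcal{P}\mathcal{T}_{\xi_i}f}{D^\beta f}$ to $\sum\inner{D^\alpha B_i\mathcal{T}_{\xi_i}f}{D^\beta f}$, since the difference lies in the image of $I-\mathcal{P}$ and pairs trivially with the divergence-free $D^\beta f$. Setting $g = \mathcal{T}_{\xi_i}f$ and combining with the matching half of the norm-squared cross contribution, the symmetric sum
\[
\sum\left\{\inner{D^\alpha B_ig}{D^\beta f} + \inner{D^\beta B_if}{D^\alpha g}\right\}
\]
admits the mixed-function analog of the proof of Lemma~\ref{lemma for conservation B_i}: by Leibniz and antisymmetry of $\mathcal{L}_{\xi_i}$ in $L^2$, the top-order $\mathcal{L}_{\xi_i}D^\alpha$-contributions cancel, and the residual is controlled by $c\norm{\xi_i}^2_{W^{m+2,\infty}}\norm{f}^2_{W^{m,2}}$ after invoking $\norm{g}_{W^{m,2}} \leq c\norm{\xi_i}_{W^{m+1,\infty}}\norm{f}_{W^{m,2}}$. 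The remaining cross term $\sum\inner{D^\alpha \mathcal{T}_{\xi_i}\mathcal{P}B_if}{D^\beta f}$ does not enjoy the gradient-preservation simplification; instead, Leibniz on $D^\alpha \mathcal{T}_{\xi_i}$ isolates the top-order piece $\sum\inner{\mathcal{P}D^\alpha B_if}{\mathcal{T}^*_{\xi_i}D^\beta f}$, which is then matched against the other half of the norm-squared cross contribution so that expansion of $B_i = \mathcal{L}_{\xi_i} + \mathcal{T}_{\xi_i}$ and the same antisymmetric cancellation trim the offending derivative.

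For the second inequality (\ref{bigbound2Li}), the outer projection is absorbed into the divergence-free derivatives of $f$: $\inner{\mathcal{P}\mathcal{L}_{\xi_i}f}{f}_{A^{m/2}} = \sum \inner{D^\alpha\mathcal{L}_{\xi_i}f}{D^\beta f}$ via (\ref{odd power inner product})--(\ref{even power inner product}). Leibniz separates a top-order term $\mathcal{L}_{\xi_i}D^\alpha f$ which, after symmetrising $(\alpha,\beta)\leftrightarrow(\beta,\alpha)$ in the sum, vanishes by antisymmetry of $\mathcal{L}_{\xi_i}$ in $L^2$; the residual lower-order contributions $\mathcal{L}_{D^{\alpha-\alpha'}\xi_i}D^{\alpha'}f$ with $\alpha'<\alpha$ are bounded via Cauchy--Schwarz and (\ref{L_ibound}) by $c\norm{\xi_i}_{W^{m,\infty}}\norm{f}^2_{W^{m,2}}$. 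Squaring yields (\ref{bigbound2Li}); the improved $W^{m,\infty}$-dependence (two fewer derivatives than in (\ref{bigbound1Li})) is precisely what the antisymmetric cancellation of the single leading-order $\mathcal{L}_{\xi_i}D^\alpha f$ affords, with no $B_i$-composition to pay for.
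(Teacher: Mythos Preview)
Your approach is genuinely different from the paper's, and as written it has a gap in the treatment of the cross terms for (\ref{bigbound1Li}).

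The paper does \emph{not} expand $\mathcal{L}_{\xi_i}=B_i-\mathcal{T}_{\xi_i}$ at the outset. Instead it keeps $\mathcal{P}$ in the norm term (precisely because it cannot be dropped here as it was in Proposition~\ref{transport and stretching}) and repeats the Lemma~\ref{lemma for conservation B_i} reduction directly for $\mathcal{L}_{\xi_i}$, arriving at a single problematic summand
\[
\inner{D^\beta f}{\mathcal{L}_{D^{\alpha-\alpha'}\xi_i}\mathcal{P}\mathcal{L}_{\xi_i}D^{\alpha'}f+\mathcal{L}_{\xi_i}^*\mathcal{P}\mathcal{L}_{D^{\alpha-\alpha'}\xi_i}D^{\alpha'}f}.
\]
Only at this point is $\mathcal{L}=B-\mathcal{T}$ invoked: writing $\mathcal{L}_{D^{\alpha-\alpha'}\xi_i}=B_{D^{\alpha-\alpha'}\xi_i}-\mathcal{T}_{D^{\alpha-\alpha'}\xi_i}$ and using $\mathcal{P}B_{D^{\alpha-\alpha'}\xi_i}\mathcal{P}=\mathcal{P}B_{D^{\alpha-\alpha'}\xi_i}$ removes the trapped $\mathcal{P}$, after which $B$ is re-expanded and the remaining first-order commutator $[\mathcal{L}_{D^{\alpha-\alpha'}\xi_i},\mathcal{L}_{\xi_i}]$ is controlled exactly as in (\ref{theactualcommutator}).

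In your global expansion the norm cross term is $\inner{\mathcal{P}B_if}{\mathcal{P}\mathcal{T}_{\xi_i}f}_{A^{m/2}}$. Converting via (\ref{odd power inner product})--(\ref{even power inner product}) you can absorb one $\mathcal{P}$ but not both, since $\mathcal{T}_{\xi_i}f$ is not divergence-free; what you actually obtain is $\sum\inner{D^\beta B_if}{D^\alpha\mathcal{P}g}$ with $g=\mathcal{T}_{\xi_i}f$, not $\sum\inner{D^\beta B_if}{D^\alpha g}$ as you wrote. The discrepancy $\sum\inner{D^\beta B_if}{D^\alpha(\mathcal{P}-I)g}$ carries $m+1$ derivatives on $f$ and does not pair trivially (neither $D^\beta B_if$ nor $\mathcal{L}_{\xi_i}D^\beta f$ is divergence-free), so the ``mixed-function analog of Lemma~\ref{lemma for conservation B_i}'' does not apply as claimed. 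The same obstruction recurs when you try to match the second cross term $\inner{\mathcal{P}D^\alpha B_if}{\mathcal{T}_{\xi_i}^*D^\beta f}$ against the other half of the norm contribution: the $\mathcal{P}$ in $\inner{\mathcal{P}\mathcal{L}_{\xi_i}D^\alpha f}{(\mathcal{T}_{\xi_i}+\mathcal{T}_{\xi_i}^*)D^\beta f}$ blocks the antisymmetric cancellation you invoke. The paper's late, targeted use of $B_i$ is precisely what avoids ever confronting this configuration.

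Your treatment of (\ref{bigbound2Li}) is correct and coincides with the paper's.
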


\begin{proof}
    Let us first consider (\ref{bigbound1Li}), which along the lines of Proposition \ref{transport and stretching} will be demonstrated by obtaining a control on 
\begin{equation}
    \inner{D^\alpha \mathcal{L}_{\xi_i}\mathcal{P}\mathcal{L}_{\xi_i}f}{D^\beta f} +  \inner{D^\alpha \mathcal{P}\mathcal{L}_{\xi_i}f}{D^\beta \mathcal{L}_{\xi_i}f} + \inner{D^\beta\mathcal{L}_{\xi_i}\mathcal{P}\mathcal{L}_{\xi_i}f}{D^\alpha f} +  \inner{D^\beta \mathcal{P}\mathcal{L}_{\xi_i}f}{D^\alpha\mathcal{L}_{\xi_i}f}  \label{Lotherbound1}
\end{equation}
similarly to Lemma \ref{lemma for conservation B_i}. On this occasion the Leray Projector persists given that it is stuck between the two $\mathcal{L}_{\xi_i}$ in the leftmost term so cannot be passed off as we have used elsewhere. For this reason we chose to maintain $\mathcal{P}$ in the norm term, allowing us to match with the previously discussed term in looking for cancellation. To control (\ref{Lotherbound1}) we follow Lemma \ref{lemma for conservation B_i}, reducing (\ref{Lotherbound1}) to the sum of a first expression
\begin{align}
  \nonumber  \Big\langle \mathcal{P}\sum_{\alpha' < \alpha} \mathcal{L}_{D^{\alpha-\alpha'} \xi_i}D^{\alpha'}f &, \sum_{\beta' < \beta} \mathcal{L}_{D^{\beta-\beta'} \xi_i}D^{\beta'}f\Big\rangle\\ &+ \Big\langle \mathcal{P}\sum_{\beta' < \beta} \mathcal{L}_{D^{\beta-\beta'} \xi_i}D^{\beta'}f , \sum_{\alpha' < \alpha} \mathcal{L}_{D^{\alpha-\alpha'} \xi_i}D^{\alpha'}f\Big\rangle \label{second term L}
\end{align}
corresponding to (\ref{second term}), and a second expression
\begin{align}
    \nonumber \sum_{\alpha' < \alpha}\inner{\mathcal{L}_{D^{\alpha-\alpha'} \xi_i}D^{\alpha'}\mathcal{P}\mathcal{L}_{\xi_i}f}{D^\beta f} &+ \sum_{\beta' < \beta}\inner{\mathcal{P}\mathcal{L}_{\xi_i}D^\alpha f}{\mathcal{L}_{D^{\beta-\beta'} \xi_i}D^{\beta'}f} \\&+ \sum_{\beta' < \beta}\inner{\mathcal{L}_{D^{\beta-\beta'} \xi_i}D^{\beta'}\mathcal{P}\mathcal{L}_{\xi_i}f}{D^\alpha f} + \sum_{\alpha' < \alpha}\inner{\mathcal{P}\mathcal{L}_{\xi_i}D^\beta f}{\mathcal{L}_{D^{\alpha-\alpha'} \xi_i}D^{\alpha'}f} \label{third term L}
\end{align}
corresponding to (\ref{third term}). Note here there that is no corresponding term to (\ref{first term}) due to the absence of the additional stretching $\mathcal{T}$. Like its counterpart (\ref{second term}), (\ref{second term L}) is constructed to be of order $m$ in all terms hence is bounded directly. The difficulties arise in (\ref{third term L}), which boils down to
\begin{align*}
    \sum_{\alpha' < \alpha}\left( \inner{D^\beta f}{\mathcal{L}_{D^{\alpha-\alpha'} \xi_i}D^{\alpha'}\mathcal{P}\mathcal{L}_{\xi_i}f} + \inner{D^\beta f}{\mathcal{L}_{\xi_i}^*\mathcal{P}\mathcal{L}_{D^{\alpha-\alpha'} \xi_i}D^{\alpha'}f}\right) 
\end{align*}
as well as a second sum over the reversed indices. We expand out $D^{\alpha'}\mathcal{P}\mathcal{L}_{\xi_i}f$ and separate into

\begin{align*}& \Big\langle D^\beta f,\mathcal{L}_{D^{\alpha-\alpha'}\xi_i}\mathcal{P}\bigg(\sum_{\gamma < \alpha'}\mathcal{L}_{D^{\alpha'-\gamma}\xi_i}D^\gamma f + \mathcal{L}_{\xi_i}D^{\alpha'}f\bigg)+ \mathcal{L}^*_{\xi_i}\mathcal{P}\mathcal{L}_{D^{\alpha-\alpha'} \xi_i}D^{\alpha'}f\Big\rangle\\
= & \Big\langle D^\beta f,\sum_{\gamma < \alpha'}\mathcal{L}_{D^{\alpha-\alpha'}\xi_i}\mathcal{P}\mathcal{L}_{D^{\alpha'-\gamma}\xi_i}D^\gamma f\Big\rangle + \inner{D^\beta f}{\mathcal{L}_{D^{\alpha-\alpha'}\xi_i}\mathcal{P}\mathcal{L}_{\xi_i}D^{\alpha'}f + \mathcal{L}^*_{\xi_i}\mathcal{P}\mathcal{L}_{D^{\alpha-\alpha'} \xi_i}D^{\alpha'}f}.  \end{align*}
The first inner product has once more removed the top order derivative and is directly controllable. In the second inner product we need to obtain some cancellation of the top order derivative, but in the presence of $\mathcal{P}$ we are not left with a clear commutator as we were in Lemma \ref{lemma for conservation B_i}. We resolve this issue by simply calling upon what worked before, adding and subtracting the relevant $\mathcal{T}$ terms to introduce $B$ which plays well with the Leray Projector. Precisely we obtain that
\begin{align*}
   &\inner{D^\beta f}{\mathcal{L}_{D^{\alpha-\alpha'}\xi_i}\mathcal{P}\mathcal{L}_{\xi_i}D^{\alpha'}f + \mathcal{L}^*_{\xi_i}\mathcal{P}\mathcal{L}_{D^{\alpha-\alpha'} \xi_i}D^{\alpha'}f}\\ & \qquad \qquad \qquad = \inner{D^\beta f}{\left(B_{D^{\alpha-\alpha'}\xi_i} - \mathcal{T}_{D^{\alpha-\alpha'}\xi_i} \right)\mathcal{P}\mathcal{L}_{\xi_i}D^{\alpha'}f - \left(B_{\xi_i} - \mathcal{T}_{\xi_i} \right)\mathcal{P}\mathcal{L}_{D^{\alpha-\alpha'} \xi_i}D^{\alpha'}f}.
\end{align*}
The resulting terms involving $\mathcal{T}$ are again of correct order, so we are only concerned with those involving $B$. In the first we rewrite
\begin{align*}
    \inner{D^\beta f}{B_{D^{\alpha-\alpha'}\xi_i}\mathcal{P}\mathcal{L}_{\xi_i}D^{\alpha'}f} &=  \inner{D^\beta f}{\mathcal{P}B_{D^{\alpha-\alpha'}\xi_i}\mathcal{P}\mathcal{L}_{\xi_i}D^{\alpha'}f}\\
    &= \inner{D^\beta f}{\mathcal{P}B_{D^{\alpha-\alpha'}\xi_i}\mathcal{L}_{\xi_i}D^{\alpha'}f}\\
    &= \inner{D^\beta f}{B_{D^{\alpha-\alpha'}\xi_i}\mathcal{L}_{\xi_i}D^{\alpha'}f}
\end{align*}
having used that $\mathcal{P}B_{D^{\alpha-\alpha'}\xi_i}\mathcal{P} = \mathcal{P}B_{D^{\alpha-\alpha'}\xi_i}$, as $D^{\alpha-\alpha'}\xi_i \in L^2_{\sigma} \cap W^{1,\infty}$ so enjoys the regularity that allowed for the conclusion of $\mathcal{P}B_i\mathcal{P} = \mathcal{P}B_i$. Repeating this procedure, we have that 
\begin{align*}
\inner{D^\beta f}{B_{D^{\alpha-\alpha'}\xi_i} \mathcal{P}\mathcal{L}_{\xi_i}D^{\alpha'}f - B_{\xi_i} \mathcal{P}\mathcal{L}_{D^{\alpha-\alpha'} \xi_i}D^{\alpha'}f}
 =  \inner{D^\beta f}{B_{D^{\alpha-\alpha'}\xi_i}\mathcal{L}_{\xi_i}D^{\alpha'}f - B_{\xi_i}\mathcal{L}_{D^{\alpha-\alpha'} \xi_i}D^{\alpha'}f}.
\end{align*}
Now we can once more decompose $B$ into its $\mathcal{L}$ and $\mathcal{T}$ terms, happy to control those with $\mathcal{T}$ directly, leaving the final problematic term as
$$\inner{D^\beta f}{\mathcal{L}_{D^{\alpha-\alpha'}\xi_i}\mathcal{L}_{\xi_i}D^{\alpha'}f - \mathcal{L}_{\xi_i}\mathcal{L}_{D^{\alpha-\alpha'} \xi_i}D^{\alpha'}f} $$
where the commutator was calculated and controlled explicitly in (\ref{theactualcommutator}), with which we conclude the justification of (\ref{bigbound1Li}). Furthermore, (\ref{bigbound2Li}) requires nothing beyond the proof of (\ref{bigbound2}). 
    
\end{proof}

Analogously to Proposition \ref{transport and stretching 2}, we also have the following.

\begin{proposition} \label{transport only 2}
    Fix $m \in \N$. There exists a constant $c$ such that for all $\xi_i \in W^{m+2,\infty} \cap L^2_{\sigma}$ and $f \in W^{m+2,2}_{\sigma}$,
    \begin{align*}
   \inner{\left(\mathcal{P}\mathcal{L}_{\xi_i}\right)^2f}{f}_{W^{m,2}} +  \norm{\mathcal{P}\mathcal{L}_{\xi_i}f}_{W^{m,2}}^2  &\leq c\norm{\xi_i}_{W^{m +2,\infty}}^2\norm{f}_{W^{m,2}}^2 ,\\
    \inner{\mathcal{P}\mathcal{L}_{\xi_i}f}{f}_{W^{m,2}}^2 &\leq c\norm{\xi_i}^2_{W^{m,\infty}}\norm{f}^4_{W^{m,2}}.
\end{align*}
\end{proposition}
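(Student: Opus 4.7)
The plan is to reduce both inequalities to Proposition \ref{first transport only} by specializing to $\alpha = \beta$, exactly in the manner Proposition \ref{transport and stretching 2} was derived from Proposition \ref{transport and stretching}. Expanding the Sobolev inner product as a sum over multi-indices of order at most $m$, it suffices to bound
\begin{equation*}
\inner{D^\alpha(\mathcal{P}\mathcal{L}_{\xi_i})^2 f}{D^\alpha f} + \norm{D^\alpha \mathcal{P}\mathcal{L}_{\xi_i}f}^2
\end{equation*}
by $c\norm{\xi_i}_{W^{m+2,\infty}}^2 \norm{f}_{W^{m,2}}^2$ for each such $\alpha$, and analogously for the second inequality.

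To bring the first expression into a shape already treated, I would use that $\mathcal{P}$ commutes with derivatives, is self-adjoint and idempotent, and fixes $D^\alpha f \in L^2_\sigma$. These facts give
\begin{equation*}
\inner{D^\alpha(\mathcal{P}\mathcal{L}_{\xi_i})^2 f}{D^\alpha f} = \inner{D^\alpha \mathcal{L}_{\xi_i}\mathcal{P}\mathcal{L}_{\xi_i}f}{D^\alpha f}, \qquad \norm{D^\alpha \mathcal{P}\mathcal{L}_{\xi_i}f}^2 = \inner{D^\alpha \mathcal{P}\mathcal{L}_{\xi_i}f}{D^\alpha \mathcal{L}_{\xi_i}f},
\end{equation*}
so the quantity to control is, up to a factor of two, the $\beta=\alpha$ specialization of (\ref{Lotherbound1}). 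The argument given in the proof of Proposition \ref{first transport only} — Leibniz-expanding $D^\alpha \mathcal{P}\mathcal{L}_{\xi_i}f$, isolating the problematic top-order contribution, adding and subtracting $\mathcal{T}_{\xi_i}$ to exploit the identity $\mathcal{P}B_{\xi_i} = \mathcal{P}B_{\xi_i}\mathcal{P}$, and concluding with the commutator computation (\ref{theactualcommutator}) — applies verbatim under $\alpha = \beta$ and yields the desired bound.

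For the second inequality I would write $\inner{\mathcal{P}\mathcal{L}_{\xi_i}f}{f}_{W^{m,2}} = \sum_{|\alpha|\leq m} \inner{D^\alpha \mathcal{L}_{\xi_i}f}{D^\alpha f}$, having first passed $\mathcal{P}$ off onto $D^\alpha f$. A Leibniz expansion in the spirit of (\ref{thenevidently}) splits each summand into $\inner{\mathcal{L}_{\xi_i}D^\alpha f}{D^\alpha f}$, which vanishes by (\ref{cancellationproperty'}), plus lower-order remainders $\inner{\mathcal{L}_{D^{\alpha-\alpha'}\xi_i}D^{\alpha'}f}{D^\alpha f}$ with $\alpha' < \alpha$. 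Cauchy--Schwarz together with (\ref{L_ibound}) then bounds each remainder by $c\norm{\xi_i}_{W^{m,\infty}}\norm{f}_{W^{m,2}}^2$, since $|\alpha - \alpha'| \leq m$ and $|\alpha'| + 1 \leq m$, and squaring delivers the stated estimate.

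No step presents a genuine obstacle; the delicate work of taming the Leray Projector sandwiched inside $\mathcal{P}\mathcal{L}_{\xi_i}\mathcal{P}\mathcal{L}_{\xi_i}$ has already been carried out in Proposition \ref{first transport only}, and what remains is the routine $\alpha = \beta$ bookkeeping together with a check that the constants depend only on the claimed Sobolev norms of $\xi_i$.
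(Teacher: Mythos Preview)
Your proposal is correct and follows exactly the approach the paper intends: the paper states Proposition \ref{transport only 2} as analogous to Proposition \ref{transport and stretching 2}, whose proof in turn consists of ``treating the Leray Projector as in Proposition \ref{transport and stretching} and simply taking $\alpha=\beta$'' in the underlying lemma. You have carried out precisely this $\alpha=\beta$ specialization of the argument from Proposition \ref{first transport only}, with the correct manipulations of $\mathcal{P}$ and the right bookkeeping on the second inequality.
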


\section{Strong Solutions of the Stochastic Euler Equation} \label{section strong solutions euler}

This section concerns the proof of Theorem \ref{main existence for euler}. Solutions are constructed from an inviscid limit of the corresponding stochastic Navier-Stokes equations. Our analysis begins by considering a smooth initial condition, uniformly bounded over $\Omega$. Uniform in viscosity estimates for the approximating sequence of stochastic Navier-Stokes equations are shown in Subsection \ref{subsection uniform estimates}. We show the existence of a limiting process and stopping time by a Cauchy approach in the spirit of [\cite{glatt2009strong}], which was extended in [\cite{goodair2024weak}]. The abstract result is given as Proposition \ref{amazing cauchy lemma} in the appendix, and the conditions to apply it are verified in Subsections \ref{subs cauchy} and \ref{subs weak equi}. Passage to the limit and existence of local smooth solutions is given in Subsection \ref{subs smooth}, where smoothness of the initial condition is optimised in Subsection \ref{subs initial condition relax}. Uniqueness is demonstrated in Subsection \ref{subs unique}, followed by the most intricate arguments of this section, for the maximality and blow-up, in Subsection \ref{subs maximality}. The proof of Theorem \ref{main existence for euler} concludes in Subsection \ref{subsection final steps} by extending to the case of an unbounded initial condition and verifying the Stratonovich identity. %As a supplement we cover the Beale-Kato-Majda blow-up criterion and global existence in 2D for the case of transport-stretching noise, as demonstrated for the vorticity forms in [\cite{crisan2019solution}, \cite{lang2023well}].

\subsection{Uniform Estimates for the Stochastic Navier-Stokes Equation} \label{subsection uniform estimates}

 In the direction of taking viscosity to zero in the stochastic Navier-Stokes equations, here we consider a viscosity $\nu_n$ and corresponding solutions $u^n$ to the equation %To unify our results for the transport and transport-stretching noise, we introduce the operator $\mathcal{G}$ with component mappings $(\mathcal{G}_i)$ to represent either $\mathcal{P}\mathcal{L}_{\xi_i}$ or $B_i$, so that results for the equation
\begin{equation} \label{projected Ito general noise}
    u^n_t = u_0 - \int_0^t\mathcal{P}\mathcal{L}_{u^n_s}u^n_s\ ds - \nu_n\int_0^t A u^n_s\, ds + \frac{1}{2}\int_0^t\sum_{i=1}^\infty \left(\mathcal{P}\mathcal{G}_i\right)^2u^n_s ds + \int_0^t \mathcal{P}\mathcal{G}u^n_s d\mathcal{W}_s 
\end{equation}
where $(\nu_n)$ converges to zero. Solution theory for these equations are established below.
%encompass both those for (\ref{projected Ito Salt}) and (\ref{projected Ito transport}).  
%\textcolor{red}{Remember to follow through typos with $\mathcal{G}_i$}

%New approach for sharper blow-up criterion!  We actually take 
%Then the idea is to apply Proposition \ref{amazing cauchy lemma} in this norm, noting initial value is zero. For the uniform bounds and Cauchy estimates, we use the Stochastic Gr\"{o}nwall Lemma \ref{gronny}. The Cauchy result is still the same, it implies the one we need for Proposition \ref{amazing cauchy lemma}. For the weak equicontinuity, we now need to show that

\begin{proposition} \label{navier stokes existence prop}
 For $m \geq 6$ let $u_0 \in L^\infty\left( \Omega ; W^{m,2}_{\sigma} \right)$ be $\mathcal{F}_0-$measurable and each $\xi_i \in L^2_{\sigma} \cap  W^{m+3,\infty}$ such that $\sum_{i=1}^\infty \norm{\xi_i}_{W^{m+2,\infty}}^2 < \infty$. Fix any $M > 1$. There exists a pair $(u^n,\tau^M_n)$ where $u^n$ is a process such that for $\mathbbm{P}-a.e.$ $\omega$, $u_{\cdot}(\omega) \in C\left([0,T];W^{m,2}_{\sigma}\right)$, and $\tau^M_n$ is the $\mathbbm{P}-a.s.$ positive stopping time defined by 
 \begin{equation} \label{taumt}
        \tau^{M}_n := T \wedge \inf\left\{s \geq 0: \sup_{r \in [0,s]}\norm{u^n_r}_{W^{m-3,2}}^2dr \geq M + \norm{u_0}_{W^{m-3,2}}^2 \right\}
    \end{equation}
which satisfies:
\begin{enumerate}
    \item Existence as a local strong solution, in the sense that $u_{\cdot}(\omega)\mathbbm{1}_{\cdot \leq \tau^M_n(\omega)} \in L^2\left([0,T];W^{m+1,2}_{\sigma}\right)$ and with $u_{\cdot}\mathbbm{1}_{\cdot \leq \tau^M_n}$ progressively measurable in $W^{m+1,2}_{\sigma}$, satisfying (\ref{projected Ito general noise}) stopped at $\tau^M_n$ $\mathbbm{P}-a.s.$ in $L^2_{\sigma}$ for all $t \in [0,T]$.

    \item Uniqueness, meaning that if $(v, \gamma)$ was any other such local strong solution then  \begin{equation} \nonumber\mathbbm{P}\left(\left\{\omega \in \Omega: u^n_{t}(\omega) =  v_{t}(\omega)  \quad \forall t \in [0,\tau^M_n \wedge \gamma] \right\} \right) = 1. \end{equation}

    \item Uniform in viscosity estimates, whereby there exists a constant $C_M$ independent of $n$ (but dependent on $M$, $u_0$) such that \begin{equation} \label{uniform in viscosity bound}
\mathbbm{E}\left( \sup_{r \in [0,\tau^M_n]}\norm{u^n_r}_{W^{m-1,2}}^2 \right) \leq C_M. %\mathbbm{E}\left( \norm{u_0}_{A^{\frac{m-1}{2}}}^2\right).
\end{equation}
    
\end{enumerate}

\end{proposition}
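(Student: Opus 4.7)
The strategy is the standard one for the stochastic Navier-Stokes equations with transport-type noise: existence and uniqueness at each fixed $\nu_n>0$ follow from a Galerkin approximation, while the crucial ingredient for the viscosity-free bound (\ref{uniform in viscosity bound}) is the closed noise control of Proposition \ref{main prop} applied at the level of $W^{m-1,2}$.

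For items 1 and 2, fix $\nu_n>0$ and consider the Galerkin truncation $(u^{n,N})$ obtained by projecting (\ref{projected Ito general noise}) onto $\mathrm{span}\{a_1,\dots,a_N\}$. Applying It\^o's formula to $\|u^{n,N}\|_{A^{m/2}}^2$, combining the nonlinear estimate (\ref{nonlinear term control}), the noise bounds of Proposition \ref{main prop} with $k=m$, and the dissipation $-2\nu_n\|u^{n,N}\|_{A^{(m+1)/2}}^2$, together with the Burkholder-Davis-Gundy inequality, gives a $N$-uniform bound in $L^2(\Omega;L^\infty_tW^{m,2}_\sigma\cap L^2_tW^{m+1,2}_\sigma)$ on a local interval $[0,\tau^{M}_{n,N}]$. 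A stochastic compactness argument yields a martingale solution; pathwise uniqueness, obtained by a standard $L^2_\sigma$-energy comparison of two solutions (using (\ref{cancellationproperty'}) and the $L^2$ noise estimate), then upgrades this to a probabilistically strong local solution $(u^n,\tau^M_n)$. Uniqueness between local strong solutions follows from the same $L^2$ comparison.

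The substantive content is the uniform-in-$n$ estimate. Apply It\^o's formula to $\|u^n\|_{W^{m-1,2}}^2=\sum_{|\alpha|\leq m-1}\|D^\alpha u^n\|_{L^2}^2$ on $[0,\tau^M_n]$. The viscous term contributes a non-positive quantity and is dropped. The nonlinear contribution is controlled by (\ref{nonlinear term control}) as
\[
\Bigl|\sum_{|\alpha|\leq m-1}\langle D^\alpha\mathcal{P}\mathcal{L}_{u^n}u^n,D^\alpha u^n\rangle\Bigr|\lesssim \|u^n\|_{W^{1,\infty}}\|u^n\|_{W^{m-1,2}}^2,
\]
and since $m\geq 6$ the embedding $W^{m-3,2}\hookrightarrow W^{1,\infty}$ on $\T$ combined with the stopping rule (\ref{taumt}) and $u_0\in L^\infty(\Omega;W^{m,2}_\sigma)$ gives $\|u^n_s\|_{W^{1,\infty}}\leq c_M$ for all $s\leq\tau^M_n$, with $c_M$ independent of $n$. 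The It\^o-Stratonovich corrector and the quadratic variation of the stochastic integral produce, after summation in $i$,
\[
\sum_{i=1}^\infty\Bigl(\langle(\mathcal{P}\mathcal{G}_i)^2u^n,u^n\rangle_{W^{m-1,2}}+\|\mathcal{P}\mathcal{G}_iu^n\|_{W^{m-1,2}}^2\Bigr)\leq c\Bigl(\sum_{i=1}^\infty\|\xi_i\|_{W^{m+1,\infty}}^2\Bigr)\|u^n\|_{W^{m-1,2}}^2
\]
by Proposition \ref{main prop} with $k=m-1$ (valid for both $\mathcal{G}_i=B_i$ and $\mathcal{G}_i=\mathcal{L}_{\xi_i}$), where the constant depends only on the finite series assumption. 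The stochastic integral is handled by BDG: its bracket, again by the second estimate of Proposition \ref{main prop}, is bounded by $c\|u^n\|_{W^{m-1,2}}^4$, allowing absorption of $\tfrac12\,\E\sup_{r\leq t\wedge\tau^M_n}\|u^n_r\|_{W^{m-1,2}}^2$ on the left. Grönwall's inequality closes the argument and delivers (\ref{uniform in viscosity bound}).

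The main obstacle is precisely the viscosity independence of $C_M$. Bounds such as (\ref{silly not to label}) close only at the cost of a constant $c_\nu$ that diverges as $\nu_n\to 0$, and are therefore useless for the inviscid limit pursued in Section \ref{section strong solutions euler}. Proposition \ref{main prop} is tailored to exactly this purpose: it estimates the noise in $W^{m-1,2}$ linearly in $\|u^n\|_{W^{m-1,2}}^2$ with no help from the viscosity, and this is what makes the uniform-in-$n$ bound possible.
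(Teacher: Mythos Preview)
Your proposal is correct and follows the same line as the paper for the substantive part (item 3): It\^{o} formula in $W^{m-1,2}$, drop the viscous term, control the nonlinearity via (\ref{nonlinear term control}) and the Sobolev embedding $W^{m-3,2}\hookrightarrow W^{1,\infty}$ afforded by the stopping time (\ref{taumt}), close the noise with Proposition \ref{main prop}, then BDG plus Gr\"onwall. The only point you gloss over is that the paper first localises with an auxiliary stopping time $\alpha_R$ (bounding $\sup\|u^n\|_{W^{m,2}}^2+\int\|u^n\|_{W^{m+1,2}}^2$) so that all expectations are finite before passing $R\to\infty$ by monotone convergence; for items 1 and 2 the paper simply invokes prior results rather than redoing the Galerkin construction you sketch.
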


\begin{proof}
The existence and uniqueness items above for $\mathcal{G}_i = B_i$ were proven in [\cite{goodair2024high}] Proposition 3.7, as iterated applications of [\cite{goodair2024improved}] Theorem 2.9. More precisely a maximal strong solution is constructed where the maximal time is given by the blow-up in $C\left([0,\cdot];W^{1,2}_{\sigma} \right) \cap L^2\left([0,\cdot];W^{2,2}_{\sigma} \right)$, a coarser norm than $C\left([0,\cdot];W^{m-3,2}_{\sigma}\right)$ until $T$, therefore $\tau^M_n$ is strictly less than the maximal time and ensures that the maximal solution is a local one up until $\tau^M_n$. Thus the local strong solution given by the maximal solution stopped at the local time, $(u^n_{\cdot \wedge \tau^{M}_n}, \tau^M_n)$ enjoys the desired regularity from [\cite{goodair2024high}] Proposition 3.7. With the bounds demonstrated for transport noise in Proposition \ref{first transport only} then the result holds similarly for $\mathcal{G}_i = \mathcal{P}\mathcal{L}_{\xi_i}$. It remains to demonstrate the uniform in viscosity estimate. To rigorously take expectations in verifying this estimate we introduce the stopping time
\begin{equation} \label{alpha R} \alpha_R:= \tau^M_n \wedge \inf\left\{s \geq 0: \sup_{r \in [0,s]}\norm{u^n_r}_{W^{m,2}}^2 + \int_0^{s}\norm{u^n_r}_{W^{m+1,2}}^2dr \geq R \right\}\end{equation}
along with notation
\begin{equation} \label{check notation} \check{u}^n_{\cdot}:= u^n_{\cdot}\mathbbm{1}_{\cdot \leq \alpha_R}. \end{equation}
Stopping $u^n$ at $\alpha_R$ and applying the It\^{o} Formula, appreciating that the identity of (\ref{projected Ito general noise}) is in fact satisfied in $W^{m-1,2}_{\sigma}$, we deduce the energy equality
\begin{align}\nonumber\norm{u^{n}_{ r \wedge \alpha_R}}_{W^{m-1,2}}^2 &= \norm{u_0}_{W^{m-1,2}}^2 - 2\int_0^{r\wedge \alpha_R}\inner{\mathcal{P}\mathcal{L}_{u^n_s}u^n_s}{u^n_s}_{W^{m-1,2}}ds - 2\nu_n\int_0^{r\wedge \alpha_R}\inner{Au^n_s}{u^n_s}_{W^{m-1,2}}ds\\ \nonumber &+ \int_0^{r\wedge \alpha_R} \sum_{i=1}^\infty \inner{\left(\mathcal{P}\mathcal{G}_i\right)^2u^n_s}{u^n_s}_{W^{m-1,2}}ds + \int_0^{r\wedge\alpha_R}\sum_{i=1}^\infty\norm{\mathcal{P}\mathcal{G}_iu^n_s}_{W^{m-1,2}}^2ds\\ &+ 2\sum_{i=1}^\infty\int_0^{r\wedge\alpha_R}\inner{\mathcal{P}\mathcal{G}_iu^n_s}{u^n_s}_{W^{m-1,2}}dW^i_s\nonumber.
\end{align}
Here we look to reduce the terms. Pertaining to the Stokes Operator we observe that \begin{equation} \label{Stokes is non neg} -\inner{Au^n_s}{u^n_s}_{W^{m-1,2}} = \inner{\mathcal{P}\Delta u^n_s}{u^n_s}_{W^{m-1,2}} = -\sum_{j=1}^3\norm{\partial_ju^n_s}_{W^{m-1,2}}^2,\end{equation} which is negative, whilst for the nonlinear term we recall the control (\ref{nonlinear term control}) to verify
$$\left\vert \inner{\mathcal{P}\mathcal{L}_{u^n_s}u^n_s}{u^n_s}_{W^{m-1,2}} \right\vert \leq c \norm{u^n_s}_{W^{1,\infty}}\norm{u^n_s}_{W^{m-1,2}}^2.$$
The following two terms, attributed to the It\^{o}-Stratonovich corrector and the quadratic variation, are controlled in the two cases of $\mathcal{G}$ by Propositions \ref{transport and stretching 2} and \ref{transport only 2}. Inserting these bounds into the given energy equality, we furthermore take absolute values and the supremum in time. Appreciating that for any $t \in [0,T]$, $\sup_{r \in [0,t]}\norm{\check{u}^n_r}_{W^{m-1,2}}^2 = \sup_{r \in [0,t]}\norm{u^n_{r \wedge \alpha_R}}_{W^{m-1,2}}^2$ as the hitting threshold is met continuously, taking expectation, applying the Burkholder-Davis-Gundy Inequality and recasting into the $\check{u}^n$ notation of (\ref{check notation}), we deduce that
\begin{align}\nonumber \mathbbm{E}\left(\sup_{r \in [0,t]}\norm{\check{u}^{n}_{ r}}_{W^{m-1,2}}^2\right) &\leq c\mathbbm{E}\left(\norm{u_0}_{W^{m-1,2}}^2 + \int_0^{t}\left[1 + \norm{\check{u}^n_s}_{W^{1,\infty}} \right]\norm{\check{u^n_s}}_{W^{m-1,2}}^2ds\right)\\  &+ c\mathbbm{E}\left(\int_0^{t}\sum_{i=1}^\infty\inner{\mathcal{P}\mathcal{G}_i\check{u}^n_s}{\check{u}^n_s}_{W^{m-1,2}}^2ds\right)^{\frac{1}{2}}\label{to be inserted}.
\end{align}
In the final term we invoke Proposition \ref{transport and stretching 2} or \ref{transport only 2} as relevant, amounting to
\begin{align} \nonumber
c\mathbbm{E}\left(\int_0^{t}\sum_{i=1}^\infty\inner{\mathcal{P}\mathcal{G}_i\check{u}^n_s}{\check{u}^n_s}_{W^{m-1,2}}^2ds\right)^{\frac{1}{2}} &\leq c\mathbbm{E}\left(\int_0^{t}\norm{\check{u}^n_s}_{W^{m-1,2}}^4ds\right)^{\frac{1}{2}}\\ \nonumber
&\leq c\mathbbm{E}\left(\sup_{r \in [0,t]}\norm{\check{u^n_r}}_{W^{m-1,2}}^2\int_0^{t}\norm{\check{u}^n_s}_{W^{m-1,2}}^2 ds\right)^{\frac{1}{2}}\\ \label{BDG type control}
&\leq \frac{1}{2}\mathbbm{E}\left(\sup_{r \in [0,t]}\norm{\check{u^n_r}}_{W^{m-1,2}}^2\right) + c\mathbbm{E}\left(\int_0^{t}\norm{\check{u}^n_s}_{W^{m-1,2}}^2 ds\right)
\end{align}
having used Young's Inequality in the final line. Inserting this back into (\ref{to be inserted}) grants us that
\begin{align}\label{granted} \mathbbm{E}\left(\sup_{r \in [0,t]}\norm{\check{u}^{n}_{ r}}_{W^{m-1,2}}^2\right) &\leq c\mathbbm{E}\left(\norm{u_0}_{W^{m-1,2}}^2 + \int_0^{t}\left[1 + \norm{\check{u}^n_s}_{W^{1,\infty}} \right]\norm{\check{u^n_s}}_{W^{m-1,2}}^2ds\right)
\end{align}
where we have simply multiplied both sides by $2$. Here we utilise the control on $\check{u}^n$ afforded to us by $\tau^M_n$, defined in (\ref{taumt}), as well as the fact that $u_0 \in L^\infty\left(\Omega; W^{m,2}_{\sigma} \right)$ so $\sup_{r \in [0,t]}\norm{\check{u}^n_r}_{W^{m-3,2}}^2 \leq C_M$ uniformly in $\omega$. Due to the Sobolev Embedding of $W^{m-3,2} \xhookrightarrow{} W^{1,\infty}$ then we likewise have that $\sup_{r \in [0,t]}\norm{\check{u}^n_r}_{W^{1,\infty}}^2 \leq C_M$. Therefore,
\begin{align}\nonumber \mathbbm{E}\left(\sup_{r \in [0,t]}\norm{\check{u}^{n}_{ r}}_{W^{m-1,2}}^2 \right) &\leq C_M\left(1 + \mathbbm{E}\int_0^{t}\norm{\check{u^n_s}}_{W^{m-1,2}}^2ds\right)
\end{align}
from which the standard Gr\"{o}nwall Inequality yields that
$$  \mathbbm{E}\left(\sup_{r \in [0,T]}\norm{\check{u}^{n}_{ r}}_{W^{m-1,2}}^2 \right) \leq C_M$$
which we write again explicitly in terms of $\alpha_R$ by
$$  \mathbbm{E}\left(\sup_{r \in [0,\alpha_R]}\norm{u^{n}_{ r}}_{W^{m-1,2}}^2\right) \leq C_M$$
whilst noting that $C_M$ is independent of $R$. We use that $\alpha_R$ is $\mathbbm{P}-a.s.$ monotonically increasing to $\tau^M_n$ as $R \rightarrow \infty$, so continuity of the integrand and applying the Monotone Convergence Theorem obtains the result.
\end{proof}

\subsection{Cauchy Property} \label{subs cauchy}

%\textcolor{red}{Need to show the Cauchy property in higher norm to allow for adaptedness in right space. So new idea is to show the Cauchy Property in $W^{m-2,2}$. This means that for $\tau^R$ as later, and for the appropriate subsequence which is relabelled to $n$ here,
%$$\lim_{j \rightarrow \infty}\sup_{n \geq j} \mathbbm{E} \left( \sup_{r \in [0,\tau^R]}\norm{u^n_r - u^j_r}_{W^{m-2,2}}^2 \right) = 0$$
%so this is legit Cauchy and admits the limit. So the smooth solution which is in the limit is now continuous and adapted in $W^{m-2,2}$. So when we relax smoothness on initial condition, we take general initial condition in new $W^{m,2}$ for any $m \geq 3$, and use that $\mathcal{P}_nu_0 \in W^{m+4,2}$, and show Cauchy property in $W^{m,2}$.}

\begin{proposition} \label{cauchy prop}
    We have that
    $$\lim_{j \rightarrow \infty}\sup_{n \geq j} \mathbbm{E} \left( \sup_{r \in [0,\tau^M_n \wedge \tau^M_j]}\norm{u^n_r - u^j_r}_{W^{m-3,2}}^2 \right) = 0. $$
\end{proposition}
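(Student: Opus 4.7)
The plan is to set $v := u^n - u^j$, which satisfies $v_0 = 0$, apply the It\^{o} formula to $\norm{v}_{W^{m-3,2}}^2$ stopped at $\tau^M_n \wedge \tau^M_j$, and close a Gr\"{o}nwall-type estimate whose forcing is linear in $|\nu_n - \nu_j|^2$. Subtracting two copies of (\ref{projected Ito general noise}), using the decomposition $\mathcal{L}_{u^n}u^n - \mathcal{L}_{u^j}u^j = \mathcal{L}_{u^j}v + \mathcal{L}_v u^j + \mathcal{L}_v v$ and the linearity of each $\mathcal{G}_i$, we obtain
$$dv = -\mathcal{P}\bigl(\mathcal{L}_{u^j}v + \mathcal{L}_v u^j + \mathcal{L}_v v\bigr)\,dt - \nu_n A v\,dt + (\nu_j - \nu_n)A u^j\,dt + \tfrac{1}{2}\sum_i(\mathcal{P}\mathcal{G}_i)^2 v\,dt + \sum_i \mathcal{P}\mathcal{G}_i v\,dW^i.$$
As in the proof of Proposition \ref{navier stokes existence prop}, rigorous passage through the It\^{o} formula is achieved via an auxiliary hitting-time truncation in the spirit of (\ref{alpha R}) followed by monotone convergence.

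The routine contributions are controlled as follows. The Stokes inner product $-\nu_n\inner{Av}{v}_{W^{m-3,2}}$ is non-positive by (\ref{Stokes is non neg}). The three nonlinear inner products are bounded by a multiple of $\bigl(1 + \norm{u^j}_{W^{1,\infty}} + \norm{u^j}_{W^{m-3,2}}\bigr)\norm{v}_{W^{m-3,2}}^2$ via (\ref{nonlinear term control}) with $k = m-3 \geq 3$, the algebra/embedding $W^{m-3,2}\hookrightarrow W^{1,\infty}$, and the stopping-time control of $\norm{v}_{W^{m-3,2}}$ by a constant $c_M$ which turns the cubic piece $\inner{\mathcal{P}\mathcal{L}_v v}{v}_{W^{m-3,2}}$ into a quadratic one in $v$. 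The It\^{o}--Stratonovich corrector and the bracket term $\sum_i\bigl(\inner{(\mathcal{P}\mathcal{G}_i)^2 v}{v}_{W^{m-3,2}} + \norm{\mathcal{P}\mathcal{G}_i v}_{W^{m-3,2}}^2\bigr)$ are dominated by $c\norm{v}_{W^{m-3,2}}^2$ thanks to Propositions \ref{transport and stretching 2} and \ref{transport only 2} together with the summability of $\norm{\xi_i}_{W^{m-1,\infty}}^2$ (which follows from the hypothesis $\sum_i \norm{\xi_i}_{W^{m+2,\infty}}^2 < \infty$). The supremum of the stochastic integral is absorbed on the left via Burkholder--Davis--Gundy, the second bound of Proposition \ref{main prop}, and Young, exactly as in (\ref{BDG type control}).

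The only non-routine term is the viscosity mismatch $(\nu_j - \nu_n)\inner{Au^j}{v}_{W^{m-3,2}}$. Commuting $\mathcal{P}$ through derivatives and distributing $\Delta$ onto $u^j$, Cauchy--Schwarz yields $c|\nu_j - \nu_n|\,\norm{u^j}_{W^{m-1,2}}\norm{v}_{W^{m-3,2}}$, and Young then produces a time integrand of $c|\nu_j - \nu_n|^2\norm{u^j}_{W^{m-1,2}}^2 + c\norm{v}_{W^{m-3,2}}^2$. Taking expectations we invoke the uniform-in-viscosity estimate (\ref{uniform in viscosity bound}) of Proposition \ref{navier stokes existence prop} to bound $\E\int_0^{\tau^M_j}\norm{u^j_s}_{W^{m-1,2}}^2\,ds \leq TC_M$ independently of $j$. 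Collecting everything and applying the standard Gr\"{o}nwall inequality gives
$$\E\left[\sup_{r \in [0,\tau^M_n \wedge \tau^M_j]}\norm{v_r}_{W^{m-3,2}}^2\right] \leq C_M'\,|\nu_j - \nu_n|^2,$$
and since $\nu_n \to 0$ the right-hand side tends to zero uniformly for $n \geq j$ as $j \to \infty$, yielding the claim. The main obstacle is precisely this viscosity mismatch: it forces us to measure $u^j$ in a Sobolev order two higher than the space in which we measure $v$, and only the independent uniform-in-viscosity higher-regularity bound (\ref{uniform in viscosity bound}), together with the quadratic smallness $|\nu_j - \nu_n|^2$, allows the scheme to close.
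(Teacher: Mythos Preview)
There is a genuine gap in the nonlinear estimate. Your claim that all three pieces of the decomposition are controlled by (\ref{nonlinear term control}) is not correct for the term $\inner{\mathcal{P}\mathcal{L}_v u^j}{v}_{W^{m-3,2}}$: estimate (\ref{nonlinear term control}) applies only to expressions of the form $\inner{\mathcal{P}\mathcal{L}_f g}{g}_{W^{k,2}}$ where the second argument of $\mathcal{L}$ coincides with the test function. Here the second argument is $u^j$ while the test function is $v$, so the cancellation built into (\ref{nonlinear term control}) is unavailable. Expanding $D^{\alpha}\mathcal{L}_v u^j$ with $\abs{\alpha}=m-3$ by Leibniz, the term with all derivatives on $u^j$ is $\inner{v^k D^{\alpha}\partial_k u^j}{D^{\alpha}v}$, which after one integration by parts becomes $-\inner{D^{\alpha}u^j}{v^k\partial_k D^{\alpha}v}$. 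Either form forces one extra derivative: you must pay either $\norm{u^j}_{W^{m-2,2}}$ or $\norm{v}_{W^{m-2,2}}$. Neither quantity is bounded \emph{path-wise} under $\tau^M_n\wedge\tau^M_j$, which only controls $\norm{\cdot}_{W^{m-3,2}}$. Consequently the Gr\"{o}nwall coefficient is not $\mathbbm{P}$--a.s.\ integrable, and absorbing $\norm{v}_{W^{m-2,2}}$ into the good viscous term would introduce a factor $\nu_n^{-1}$ in the Gr\"{o}nwall constant, destroying uniformity in $n$.

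This loss of one derivative in the difference is exactly the well-known obstruction to energy estimates for the Euler difference at the same regularity level as the solutions, and it does not disappear here. The paper circumvents it by invoking the sharper nonlinear bound (from \cite{glatt2009strong})
\[
\left\vert\inner{\mathcal{P}\mathcal{L}_{u^n}u^n-\mathcal{P}\mathcal{L}_{u^j}u^j}{v}_{W^{m-3,2}}\right\vert \leq c\norm{u^n}_{W^{m-2,2}}^2\norm{v}_{W^{m-4,2}}^2 + c\bigl(1+\norm{u^n}_{W^{m-3,2}}+\norm{u^j}_{W^{m-3,2}}\bigr)\norm{v}_{W^{m-3,2}}^2,
\]
which isolates the excess derivative as the product $\norm{u^n}_{W^{m-2,2}}^2\norm{v}_{W^{m-4,2}}^2$, and then establishes separately that $\sup_{n\geq j}\mathbbm{E}\sup_r\bigl(\norm{u^n_r}_{W^{m-2,2}}^2\norm{v_r}_{W^{m-4,2}}^2\bigr)\to 0$ by deriving an It\^{o} evolution for the product and applying Gr\"{o}nwall once more; at the lower level $W^{m-4,2}$ the nonlinear difference \emph{does} close against $\norm{u^n}_{W^{m-3,2}}+\norm{u^j}_{W^{m-3,2}}$, which is what the stopping times control. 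Your treatment of the viscosity mismatch and of the noise terms is fine, but without this additional layer the argument does not close.
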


\begin{proof}
    Our approach is similar to the proof of Proposition \ref{navier stokes existence prop}, where we shall look at the energy of the identity satisfied by $u^n-u^j$ for $n \geq j$. To this end we define $\alpha := \tau^M_n \wedge \tau^M_j$, noting that we do not need to truncate by $R$ as in the previous result given the uniform estimate of (\ref{uniform in viscosity bound}). We obtain the energy identity
\begin{align}\nonumber\norm{u^{n}_{ r \wedge \alpha} - u^{j}_{ r \wedge \alpha}}_{W^{m-3,2}}^2 &= - 2\int_0^{r\wedge \alpha}\inner{\mathcal{P}\mathcal{L}_{u^n_s}u^n_s - \mathcal{P}\mathcal{L}_{u^j_s}u^j_s}{u^n_s - u^j_s}_{W^{m-3,2}}ds\\ \nonumber &- 2\nu_n\int_0^{r\wedge \alpha}\inner{Au^n_s}{u^n_s - u^j_s}_{W^{m-3,2}}ds + 2\nu_j\int_0^{r\wedge \alpha}\inner{Au^j_s}{u^n_s - u^j_s}_{W^{m-3,2}}ds\\ \nonumber &+ \int_0^{r\wedge \alpha} \sum_{i=1}^\infty \left( \inner{\left(\mathcal{P}\mathcal{G}_i\right)^2\left(u^n_s - u^j_s\right)}{u^n_s-u^j_s}_{W^{m-3,2}} + \norm{\mathcal{P}\mathcal{G}_i\left(u^n_s - u^j_s\right)}_{W^{m-3,2}}^2 \right) ds\\ &+ 2\sum_{i=1}^\infty\int_0^{r\wedge\alpha}\inner{\mathcal{P}\mathcal{G}_i\left(u^n_s - u^j_s\right)}{u^n_s - u^j_s}_{W^{m-3,2}}dW^i_s\nonumber
\end{align}
and look to reduce the terms individually. For the nonlinear term, exactly as in [\cite{glatt2009strong}] (7.12) we obtain that
\begin{align*}
    &\left\vert\inner{\mathcal{P}\mathcal{L}_{u^n_s}u^n_s - \mathcal{P}\mathcal{L}_{u^j_s}u^j_s}{u^n_s - u^j_s}_{W^{m-3,2}}\right\vert\\ & \qquad  \leq c\norm{u^n_s}_{W^{m-2,2}}^2\norm{u^n_s-u^j_s}_{W^{m-4,2}}^2 + c\left(1 + \norm{u^n_s}_{W^{m-3,2}} + \norm{u^j_s}_{W^{m-3,2}} \right)\norm{u^n_s - u^j_s}_{W^{m-3,2}}^2.
\end{align*}
In the Stokes terms we may use a coarse control with Cauchy-Schwarz and Young's Inequality, bounding them both by
\begin{align} \label{hokey stokey bound}
     c(\nu_n + \nu_j)\int_0^{r\wedge \alpha}\norm{u^n_s}_{W^{m-1,2}}^2 + \norm{u^j_s}_{W^{m-1,2}}^2ds.
\end{align}
Owing to the linearity of the noise terms, their treatment requires no difference to that conducted in the proof of Proposition \ref{navier stokes existence prop}. Indeed by repeating the steps of that proof, with the new notation $\hat{u}^n_{\cdot} = u^n_{\cdot}\mathbbm{1}_{\cdot \leq \alpha}$, $\hat{u}^j_{\cdot} = u^j_{\cdot}\mathbbm{1}_{\cdot \leq \alpha}$, we achieve that
\begin{align}\nonumber &\mathbbm{E}\left(\sup_{r \in [0,t]}\norm{\hat{u}^{n}_{r} - \hat{u}^j_r }_{W^{m-3,2}}^2\right)\\ & \quad \leq c\mathbbm{E}\left(\int_0^{t}\left[1 + \norm{\hat{u}^n_s}_{W^{m-3,2}} + \norm{\hat{u}^j_s}_{W^{m-3,2}} \right]\norm{\hat{u}^n_s - \hat{u}^j_s}_{W^{m-3,2}}^2ds\right) \nonumber \\ \label{this one is numbered} & \quad + c(\nu_n + \nu_j)\mathbbm{E}\left(\int_0^{t} \norm{\hat{u}^n_s}_{W^{m-1,2}}^2 + \norm{\hat{u}^j_s}_{W^{m-1,2}}^2 ds\right) 
 +c \mathbbm{E}\left( \int_0^t \norm{\hat{u}^n_s}_{W^{m-2,2}}^2\norm{\hat{u}^n_s - \hat{u}^j_s}_{W^{m-4,2}}^2 ds \right).
\end{align}
In the first term we shall again use the control granted by the stopping time $\tau^M_n \wedge \tau^M_j$. In the second term we first employ that $\nu_n + \nu_j \leq 2\nu_j$, and furthermore  that
\begin{align*}c\nu_j\mathbbm{E}\left(\int_0^t \norm{\hat{u}^n_s}_{W^{m-1,2}}^2 + \norm{\hat{u}^j_s}_{W^{m-1,2}}^2ds \right) &= c\nu_j\int_0^t \mathbbm{E}\left(\norm{\hat{u}^n_s}_{W^{m-2,2}}^2 + \norm{\hat{u}^j_s}_{W^{m-1,2}}^2\right) ds\\ &\leq c\nu_j \int_0^t C_M ds \leq C_M \nu_j \end{align*}
due to (\ref{uniform in viscosity bound}), where $C_M$ here is also dependent on $T$ which remains fixed.  Therefore, revisiting (\ref{this one is numbered}), we have achieved that
\begin{align}\nonumber \mathbbm{E}\left(\sup_{r \in [0,t]}\norm{\hat{u}^{n}_{r} - \hat{u}^j_r }_{W^{m-3,2}}^2 \right) &\leq C_M\left[\nu_j + \mathbbm{E}\left(\int_0^{t}\norm{\hat{u}^n_s - \hat{u}^j_s}_{W^{m-3,2}}^2ds\right)\right]\\ \nonumber &+c \mathbbm{E}\left( \int_0^t \norm{\hat{u}^n_s}_{W^{m-2,2}}^2\norm{\hat{u}^n_s - \hat{u}^j_s}_{W^{m-4,2}}^2 ds \right).
\end{align}
With a coarse bound on the final term, allowing the generic constant $c$ to be dependent on $M$ and $T$ and applying the classical Gr\"onwall lemma, 
\begin{align}\nonumber &\mathbbm{E}\left(\sup_{r \in [0,T]}\norm{\hat{u}^{n}_{r} - \hat{u}^j_r }_{W^{m-3,2}}^2 \right) \leq c \left( \nu_j + \mathbbm{E}\left[ \sup_{r \in [0,T]}\left(\norm{\hat{u}^n_r}_{W^{m-2,2}}^2\norm{\hat{u}^n_r - \hat{u}^j_r}_{W^{m-4,2}}^2\right) \right] \right).
\end{align}
Moreover, the result will follow once we show that
\begin{align} \label{ultimate yield}
    \lim_{j \rightarrow \infty} \sup_{n\geq j} \mathbbm{E} \left[ \sup_{r\in [0,T]} \left(\norm{\hat{u}^n_r}_{W^{m-2,2}}^2
    \norm{\hat{u}^n_r - \hat{u}^j_r}_{W^{m-4,2}}^2\right)  \right] =0.
\end{align}
The following arguments will be notationally heavy, so we replace $\hat{u}^{n}$ by simply $u^{n}$ for convenience, although we stress that it remains truly truncated at $\alpha$. Furthermore let us introduce the notation $v_{t}=u^{n}_{t}-u^{j}_{t}$. To estimate this term we again aim to use the Gr\"onwall inequality, hence we shall determine an evolution equation for the product
$\norm{u^{n}_{\cdot}}_{W^{m-2,2}}^{2}\norm{v_{\cdot}}_{W^{m-4,2}}^{2}$. To this end we note that by the It\^o formula,
\begin{align*}
    d\norm{u^{n}_{t}}_{W^{m-2,2}}^{2} &= -2 \inner{\proj \nl{u^{n}_{t}}{u^{n}_{t}}}{u^{n}_{t}}_{W^{m-2,2}}dt \\
    &- 2\nu_{n}\inner{Au^{n}_{t}}{u^{n}_{t}}_{W^{m-2,2}} dt \\
    &+ \infsum{i}\left[\inner{\left(\proj\Gi\right)^{2} u^{n}_{t}}{u^{n}_{t}}_{W^{m-2,2}} + \norm{\proj \Gi u^{n}_{t}}_{W^{m-2,2}}^2\right]dt \\
    &+2 \infsum{i} \inner{\proj\Gi u^{n}_{t}}{u^{n}_{t}}_{W^{m-2,2}} dW_{t}^{i}\\
    &:= (I_{1} + I_{2} +I_{3}) dt + I_{4}d\mathcal{W}_{t}
\end{align*}
as well as
\begin{align*}
    d\norm{v_{t}}_{W^{m-4,2}}^{2} &= -2 \inner{\proj \nl{u^{n}_{t}}{u^{n}_{t}}-\proj \nl{u^{j}_{t}}{u^{j}_{t}}}{u^{n}_{t}-u^{j}_{t}}_{W^{m-4,2}}dt \\
    &- \left[2\nu_{n}\inner{Au^{n}_{t}}{u^{n}_{t}-u^{j}_{t}}_{W^{m-4,2}} + 2\nu_{j}\inner{Au^{j}_{t}}{u^{n}_{t}-u^{j}_{t}}_{W^{m-4,2}}\right]dt \\
    &+ \infsum{i}\left[\inner{\left(\proj\Gi\right)^{2} (u^{n}_{t}-u^{j}_{t})}{u^{n}_{t}-u^{j}_{t}}_{W^{m-4,2}} + \norm{\proj \Gi (u^{n}_{t}-u^{j}_{t})}_{W^{m-4,2}}^2\right]dt \\
    &+2 \infsum{i} \inner{\proj\Gi (u^{n}_{t}-u^{j}_{t})}{u^{n}_{t}-u^{j}_{t}}_{W^{m-4,2}} dW_{t}^{i}\\
    &:= (J_{1} + J_{2} +J_{3}) dt + J_{4}d\mathcal{W}_{t}.
\end{align*}
As the norms are simply real valued processes, then by the usual It\^o product rule we obtain
\begin{align*}
    d(\norm{u^{n}_{t}}_{W^{m-2,2}}^{2}\norm{v_{t}}_{W^{m-4,2}}^{2}) &= \norm{u^{n}_{t}}_{W^{m-2,2}}^{2}d\norm{v_{t}}_{W^{m-4,2}}^{2} + \norm{v_{t}}_{W^{m-4,2}}^{2}d\norm{u^{n}_{t}}_{W^{m-2,2}}^{2}\\ & \qquad \qquad
    + d\norm{u^{n}_{t}}_{W^{m-2,2}}^{2}d\norm{v_{t}}_{W^{m-4,2}}^{2} \\
    &=\left[\norm{u^{n}_{t}}_{W^{m-2,2}}^{2}(J_{1}+J_{2}+J_{3}) + \norm{v_{t}}_{W^{m-4,2}}^{2}(I_{1}+I_{2}+I_{3}) + K\right] dt \\
    & \qquad \qquad + \left[\norm{u^{n}_{t}}_{W^{m-2,2}}^{2} J_{4} + \norm{v_{t}}_{W^{m-4,2}}^{2} I_{4} \right]dW_{t}
\end{align*}
where $K=4\infsum{i}\inner{\proj\Gi u^{n}_{t}}{u^{n}_{t}}_{W^{m-2,2}}\inner{\proj\Gi v_{t}}{v_{t}}_{W^{m-4,2}}$. We now turn to estimating all of the terms. We shall frequently use the nonlinearity estimate (\ref{nonlinear term control}) and the noise control of Propositions \ref{transport and stretching 2} and \ref{transport only 2} without explicit reference, also absorbing the dependence on $(\xi_i)$ into the generic constant. Firstly,
\begin{align}
   \nonumber \left|\norm{v_{t}}_{W^{m-4,2}}^{2}(I_{1}+I_{3})\right| &\lesssim \norm{v_{t}}_{W^{m-4,2}}^{2} \left( \norm{u^{n}_{t}}_{W^{1,\infty}} \norm{u^{n}_{t}}_{W^{m-2,2}}^{2}
    + \norm{u^{n}_{t}}_{W^{m-2,2}}^{2} \right) \\
    & \lesssim \norm{v_{t}}_{W^{m-4,2}}^{2}\norm{u^{n}_{t}}_{W^{m-2,2}}^{2} \left( \norm{u^{n}_{t}}_{W^{1,\infty}} + 
    1\right). \label{a la first}
\end{align}
Secondly,
\begin{align}
    \norm{u^{n}_{t}}_{W^{m-2,2}}^{2} |J_{3}| &\lesssim  \norm{u^{n}_{t}}_{W^{m-2,2}}^{2} \norm{v_{t}}_{W^{m-4,2}}^{2}. \label{a la second}
\end{align}
The contribution of the nonlinear term in $v$ is more delicate, although this is handled in [\cite{glatt2012local}] (7.19), (7.23). In particular,
\begin{equation} \label{a la third} \norm{u^{n}_{t}}_{W^{m-2,2}}^{2} |J_{1}| \lesssim \norm{u^{n}_{t}}_{W^{m-2,2}}^{2}\norm{v_t}_{W^{m-4,2}}^2\left(\norm{u^n}_{W^{m-3,2}} + \norm{u^j}_{W^{m-3,2}} \right). \end{equation}

The $|K|$ term enjoys our noise estimates, with
\begin{align}\label{Kbound}
   |K| &\lesssim \infsum{i}\left|\inner{\proj\Gi u^{n}_{t}}{u^{n}_{t}}_{W^{m-2,2}} \right| \left| \inner{\proj\Gi v_{t}}{v_{t}}_{W^{m-4,2}}\right| 
    \lesssim \norm{u^{n}_{t}}_{W^{m-2,2}}^{2} \norm{v_{t}}_{W^{m-4,2}}^{2}.
\end{align}
We move towards the viscous terms. With the observation (\ref{Stokes is non neg}),
\begin{align} \label{viscosity1}
    \norm{v_{t}}_{W^{m-4,2}}^{2}I_{2} = -2\nu_{n} \norm{v_{t}}_{W^{m-4,2}}^{2}\inner{Au^{n}_{t}}{u^{n}_{t}}_{W^{m-2,2}} \leq 
    0.
\end{align}
Employing the same observation again, 
\begin{align}
   \nonumber \norm{u^{n}_{t}}_{W^{m-2,2}}^{2}J_{2} &= -2\nu_{n}\norm{u^{n}_{t}}_{W^{m-2,2}}^{2}
    \inner{Au^{n}_{t}}{v_{t}}_{W^{m-4,2}} + 2\nu_{j}\norm{u^{n}_{t}}_{W^{m-2,2}}^{2}
    \inner{Au^{j}_{t}}{v_{t}}_{W^{m-4,2}}\\ \nonumber
    &= -2\nu_{j}\norm{u^{n}_{t}}_{W^{m-2,2}}^{2}
    \inner{Av_{t}}{v_{t}}_{W^{m-4,2}} + 2\left(\nu_{j}-\nu_{n}\right)\norm{u^{n}_{t}}_{W^{m-2,2}}^{2}
    \inner{Au^{n}_{t}}{v_{t}}_{W^{m-4,2}}\\ \nonumber
    &\lesssim  \left(\nu_{j}-\nu_{n}\right)\norm{u^{n}_{t}}_{W^{m-2,2}}^{2}
    \inner{Au^{n}_{t}}{v_{t}}_{W^{m-4,2}}\\
    &\lesssim \nu_{j}\norm{u^{n}_{t}}_{W^{m-2,2}}^{2}\left(\norm{u^{n}_{t}}_{W^{m-3,2}}^{2} + \norm{u^{j}_{t}}_{W^{m-3,2}}^{2}\right). \label{viscosity2}
\end{align}
similarly to (\ref{hokey stokey bound}). It is now crucial that (\ref{viscosity2}) will be going to
zero, forced by the decay of $\nu_{j}$ and using the uniform bound (\ref{uniform in viscosity bound}). All that is left to estimate now are the stochastic integrals, which we control similarly to (\ref{BDG type control}):
\begin{align} \nonumber
    &\E \left[ \sup_{r\in [0,t]}\left|\int_{0}^{r} \norm{u^{n}_{s}}_{W^{m-2,2}}^{2}J_{4}d\mathcal{W}_{s} \right| \right] \\ \nonumber & \qquad \qquad \leq c 
    \E \left[ \sup_{r\in [0,t]}\left| \infsum{i} \int_{0}^{r}  \norm{u^{n}_{s}}_{W^{m-2,2}}^{2}  
    \inner{\proj\Gi (u^{n}_{t}-u^{j}_{t})}{u^{n}_{t}-u^{j}_{t}}_{W^{m-4,2}} dW_{s}^{i} \right| \right] \\ \nonumber
    & \qquad \qquad \leq c\E \left[ \left( \int_{0}^{t} \norm{u^{n}_{s}}_{W^{m-2,2}}^{4}  \infsum{i} 
    \inner{\proj\Gi (u^{n}_{t}-u^{j}_{t})}{u^{n}_{t}-u^{j}_{t}}_{W^{m-4,2}}^2 ds \right)^{\frac{1}{2}} \right]\\ \nonumber
    & \qquad \qquad \leq  c\E \left[ \left( \int_{0}^{t} \norm{u^{n}_{s}}_{W^{m-2,2}}^{4}  
    \norm{v_{s}}_{W^{m-4,2}}^{4} ds \right)^{\frac{1}{2}} \right]\\ \nonumber
    & \qquad \qquad \leq c\E \left[ \left( \sup_{r\in[0,t]}
    \norm{u^{n}_{r}}_{W^{m-2,2}}^{2}\norm{v_{r}}_{W^{m-4,2}}^{2} \right)^{\frac{1}{2}}
    \left( \int_{0}^{t} \norm{u^{n}_{s}}_{W^{m-2,2}}^{2}\norm{v_{s}}_{W^{m-4,2}}^{2} ds
    \right)^{\frac{1}{2}} \right]\\
    & \qquad \qquad \leq \frac{1}{4}\E \left[ \sup_{r\in[0,t]}
    \norm{u^{n}_{r}}_{W^{m-2,2}}^{2}\norm{v_{r}}_{W^{m-4,2}}^{2}\right] +
    c \E\left(\int_{0}^{t} 
    \norm{u^{n}_{s}}_{W^{m-2,2}}^{2}\norm{v_{s}}_{W^{m-4,2}}^{2}ds\right). \label{stochest1}
\end{align}
We have used the Burkholder-Davis-Gundy Inequality and in the last line Young's inequality, so that we can subtract the 
supremum term from the left hand side in the final step when we collect all of the
estimates and apply the Gr\"{o}nwall lemma. Identically, we have that

\begin{align} \nonumber 
    &\E \left[ \sup_{r\in [0,t]}\left|\int_{0}^{s} \norm{{v}_{s}}_{W^{m-4,2}}^{2}I_{4}d\mathcal{W}_{s} \right| \right] \\ & \qquad \qquad \leq
    \frac{1}{4}\E \left[ \sup_{s\in[0,t]}
    \norm{u^{n}_{s}}_{W^{m-2,2}}^{2}\norm{v_{s}}_{W^{m-4,2}}^{2}\right] +
    c \E \left(\int_{0}^{t} 
    \norm{u^{n}_{s}}_{W^{m-2,2}}^{2}\norm{v_{s}}_{W^{m-4,2}}^{2}ds\right). \label{stochest2}
\end{align}
Collecting all of the estimates (\ref{a la first})-(\ref{stochest2}), as well as using the uniform control granted by $\alpha$, we can finally write down that
\begin{align*}
    \E \left[ \sup_{r\in[0,t]} \norm{u^{n}_{r}}_{W^{m-2,2}}^{2}\norm{v_{r}}_{W^{m-4,2}}^{2}\right] \lesssim
    \nu_j\mathbbm{E}\left(\int_0^t\norm{u^n_s}_{W^{m-2,2}}^{2} ds\right) +
    \mathbbm{E}\left(\int_{0}^{t} 
    \norm{u^{n}_{s}}_{W^{m-2,2}}^{2}\norm{v_{s}}_{W^{m-4,2}}^{2}ds\right)
\end{align*}
noting that $v_0 = 0$. Employing the bound (\ref{uniform in viscosity bound}) in the viscous term, an application of the standard Gr\"onwall lemma yields that
\begin{align} \nonumber
    \E \left[ \sup_{r\in[0,T]} \norm{u^{n}_{r}}_{W^{m-2,2}}^{2}\norm{v_{r}}_{W^{m-4,2}}^{2}\right] \lesssim
    \nu_{j}
\end{align}
from which we conclude (\ref{ultimate yield}) and ultimately the result.

\end{proof}

\subsection{Weak Equicontinuity} \label{subs weak equi}

\begin{lemma} \label{weak equi lemma}
     Let $\theta$ be a stopping time and $(\delta_l)$ a sequence of stopping times which converge to $0$ $\mathbbm{P}-a.s.$. Then
     $$\lim_{l \rightarrow \infty}\sup_{n\in\N}\mathbbm{E}\left[\sup_{r \in [0 ,(\theta + \delta_l)\wedge \tau^M_n]}\norm{u^n_r}_{W^{m-3,2}}^2 - \sup_{r \in [0 , \theta\wedge \tau^M_n]}\norm{u^n_r}_{W^{m-3,2}}^2 \right] =0. $$

     %$$\lim_{l \rightarrow \infty}\sup_{n\in\N}\mathbbm{E}\left[\left(\int_0^{(\theta + \delta_l)\wedge \tau^M_n}\norm{u^n_s}_{W^{1,\infty}}ds\right)^2 - \left(\int_0^{\theta\wedge \tau^M_n}\norm{u^n_s}_{W^{1,\infty}}ds\right)^2 \right] =0.$$
\end{lemma}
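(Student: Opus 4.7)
The plan is to apply It\^{o}'s formula to $\norm{u^n_r}_{W^{m-3,2}}^2$ between the stopping times $\theta \wedge \tau^M_n$ and $(\theta + \delta_l)\wedge \tau^M_n$, and exploit the pathwise bound $\sup_{r \in [0,\tau^M_n]}\norm{u^n_r}_{W^{m-3,2}}^2 \leq M + \norm{u_0}_{W^{m-3,2}}^2$ that is built into the definition (\ref{taumt}), together with the noise estimates of Propositions \ref{transport and stretching 2} and \ref{transport only 2}, to obtain bounds whose constants do not depend on $n$.

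First I reduce the statement to an oscillation estimate for $u^n$ itself. For any real-valued function $g$ and $s \leq t$, the running maximum $f(t) := \sup_{r \leq t}g(r)$ satisfies the pointwise inequality $f(t) - f(s) \leq \sup_{r \in [s,t]}\bigl(g(r)-g(s)\bigr)$, as follows from splitting into cases $f(s) \geq \sup_{[s,t]}g$ and $f(s) < \sup_{[s,t]}g$ and noting that $g(s) \leq f(s)$. Applying this $\omega$-wise with $g(r) = \norm{u^n_r}_{W^{m-3,2}}^2$, $s = \theta \wedge \tau^M_n$, $t = (\theta + \delta_l)\wedge \tau^M_n$ reduces the problem to bounding, uniformly in $n$ and with a quantity vanishing as $l \to \infty$, the expectation
\begin{equation*}
    \mathbbm{E}\sup_{r \in [\theta\wedge\tau^M_n,\,(\theta+\delta_l)\wedge\tau^M_n]}\Bigl(\norm{u^n_r}_{W^{m-3,2}}^2 - \norm{u^n_{\theta\wedge\tau^M_n}}_{W^{m-3,2}}^2\Bigr).
\end{equation*}

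Next I apply the It\^{o} Formula (as in the derivation of (\ref{granted}) but at regularity $m-3$) to expand this increment into four drift contributions and a stochastic integral. The Stokes term $-2\nu_n\inner{A u^n_s}{u^n_s}_{W^{m-3,2}}$ is non-positive by (\ref{Stokes is non neg}) and may be discarded; this is the crucial step by which I eliminate all $n$-dependence coming from $\nu_n$. The nonlinear term is controlled by (\ref{nonlinear term control}) at level $m-3$ (legitimate because $m \geq 6 \geq 3+3$), giving $c\norm{u^n_s}_{W^{1,\infty}}\norm{u^n_s}_{W^{m-3,2}}^2$; the two noise corrector terms are controlled by Propositions \ref{transport and stretching 2} and \ref{transport only 2} applied at level $m-3$, yielding $c\norm{u^n_s}_{W^{m-3,2}}^2$ where $c$ depends only on $\sum_i \norm{\xi_i}_{W^{m-1,\infty}}^2 < \infty$. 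Because $m \geq 6$, the Sobolev embedding $W^{m-3,2}\hookrightarrow W^{1,\infty}$ applies, so all these integrands are majorised on $[0,\tau^M_n]$ by a deterministic constant $C_M$ (using in addition that $u_0 \in L^\infty(\Omega;W^{m,2}_\sigma)$). Since $((\theta+\delta_l)\wedge\tau^M_n) - (\theta\wedge\tau^M_n) \leq \delta_l$ pointwise, the drift contribution is at most $C_M \mathbbm{E}\delta_l$.

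For the stochastic integral, Burkholder-Davis-Gundy followed by the second inequalities of Propositions \ref{transport and stretching 2} and \ref{transport only 2} yields a bound
\begin{equation*}
    c\,\mathbbm{E}\left(\int_{\theta\wedge\tau^M_n}^{(\theta+\delta_l)\wedge\tau^M_n}\sum_{i=1}^\infty\inner{\mathcal{P}\mathcal{G}_i u^n_s}{u^n_s}_{W^{m-3,2}}^2 ds\right)^{\tfrac12} \leq c\,\mathbbm{E}\left(\int_{\theta\wedge\tau^M_n}^{(\theta+\delta_l)\wedge\tau^M_n}\norm{u^n_s}_{W^{m-3,2}}^4 ds\right)^{\tfrac12} \leq C_M\,\mathbbm{E}\sqrt{\delta_l},
\end{equation*}
again invoking the pathwise $\tau^M_n$ bound. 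Combining the two contributions and noting $\delta_l \leq T$ with $\delta_l \to 0$ $\mathbbm{P}$-a.s., dominated convergence gives $\mathbbm{E}\delta_l,\,\mathbbm{E}\sqrt{\delta_l}\to 0$, which concludes the proof. The main obstacle is not technical but structural: pushing the constants to be genuinely $n$-independent requires exactly three ingredients working in concert---the sign of the Stokes term eliminating $\nu_n$, the embedding $W^{m-3,2}\hookrightarrow W^{1,\infty}$ converting the $\tau^M_n$-bound into a bound on the nonlinearity, and the closed noise estimates of Section \ref{section estimates on the noise} handling the corrector and quadratic variation without loss of derivatives.
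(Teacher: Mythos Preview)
Your proof is correct and follows essentially the same approach as the paper's: reduce the running-maximum increment to an oscillation of $\norm{u^n_\cdot}_{W^{m-3,2}}^2$, apply It\^{o}'s formula between $\theta\wedge\tau^M_n$ and $(\theta+\delta_l)\wedge\tau^M_n$, discard the signed Stokes term, bound the remaining drift and BDG terms uniformly in $n$ via the $\tau^M_n$ control and the noise estimates of Propositions~\ref{transport and stretching 2} and~\ref{transport only 2}, and conclude from $\mathbbm{E}\delta_l,\,\mathbbm{E}\sqrt{\delta_l}\to 0$. The only cosmetic difference is that the paper cites the running-maximum inequality as an external observation, whereas you prove it directly, and you invoke dominated convergence (which is in fact the correct tool here since $(\delta_l)$ is not assumed monotone) where the paper writes Monotone Convergence.
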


\begin{proof}

%We rewrite the expectation as
%$$\mathbbm{E}\left[\int_0^{(\theta + \delta_l)\wedge \tau^M_n}\int_0^{(\theta + \delta_l)\wedge \tau^M_n}\norm{u^n_s}_{W^{1,\infty}}\norm{u^n_r}_{W^{1,\infty}}dsdr - \int_0^{\theta \wedge \tau^M_n}\int_0^{(\theta + \delta_l)\wedge \tau^M_n}\norm{u^n_s}_{W^{1,\infty}}\norm{u^n_r}_{W^{1,\infty}}dsdr \right]$$
%and further
%$$\mathbbm{E}\left[\int_{\theta \wedge \tau^M_n}^{(\theta + \delta_l)\wedge \tau^M_n}\int_{\theta \wedge \tau^M_n}^{(\theta + \delta_l)\wedge \tau^M_n}\norm{u^n_s}_{W^{1,\infty}}\norm{u^n_r}_{W^{1,\infty}}dsdr \right].$$
%We can control one of the integrals uniformly through the stopping time, yielding a bound on the above by
%$$c\mathbbm{E}\left[\int_{\theta \wedge \tau^M_n}^{(\theta + \delta_l)\wedge \tau^M_n}\norm{u^n_s}_{W^{1,\infty}}ds \right]$$
%which is itself bounded by
%$$c\mathbbm{E}\left[\delta_l \sup_{r \in [0,\tau^M_n]}\norm{u^n_r}_{W^{3,2}} \right].$$
%To this we apply H\"{o}lder's Inequality, deducing a control 
%$$\left(\mathbbm{E}\left[\delta_l^2\right]\right)^{\frac{1}{2}}\left(\mathbbm{E}\left[\sup_{r \in [0,\tau^M_n]}\norm{u^n_r}_{W^{3,2}}^2 \right]\right)^{\frac{1}{2}}$$
%where the expectation of the supremum term admits a uniform bound in $n$ due to (\ref{uniform in viscosity bound}), so the Monotone Convergence Theorem applied for $\delta_l \rightarrow 0$ justifies the result. 

    Using the observation (75) from [\cite{goodair2024weak}] we simplify the task at hand to
    \begin{align} \label{simplified task}
        \lim_{l \rightarrow \infty}\sup_{n\in\N}\mathbbm{E}\left(\sup_{r \in [0,\delta_l]}\norm{u^n_{(\theta + r) \wedge \tau^{M}_n]}}_{W^{m-3,2}}^2 - \norm{u^n_{\theta \wedge \tau^{M}_n]}}_{W^{m-3,2}}^2 \right) =0.
    \end{align}
    To obtain the above estimate we apply the It\^{o} Formula in $W^{m-3,2}_{\sigma}$ up until the stopping time $\theta \wedge \tau^{M}_n$ and then $(\theta + r) \wedge \tau^M_n$ for some $r \geq 0$, then subtract the two to obtain that
    \begin{align*}
        &\norm{u^{n}_{(\theta + r) \wedge \tau^{M}_n}}^{2}_{W^{m-3,2}} -\norm{u^{n}_{\theta  \wedge \tau^{M}_n}}^{2}_{W^{m-3,2}}
        + 2\nu_{n}\int_{\theta \wedge \tau^{M}_n}^{(\theta + r) \wedge \tau^{M}_n} \inner{Au^{n}_{t}}{u^{n}_{t}}_{W^{m-3,2}}dt\\ & \qquad  = -2\int_{\theta \wedge \tau^{M}_n}^{(\theta + r) \wedge \tau^{M}_n} \inner{\proj \nl{u^{n}_{t}}{u^{n}_{t}}} {u^{n}_{t}}_{W^{m-3,2}}dt + 2\infsum{i} \int_{\theta \wedge \tau^{M}_n}^{(\theta + r) \wedge \tau^{M}_n}  \inner{\proj\Gi u^{n}_{t}}{u^{n}_{t}}_{W^{m-3,2}} dW_{t}^{i}.\\ & \qquad  + 
        \int_{\theta \wedge \tau^{M}_n}^{(\theta + r) \wedge \tau^{M}_n} \infsum{i}\left(\inner{\left(\proj\Gi\right)^{2} u^{n}_{t}}{u^{n}_{t}}_{W^{m-3,2}} + \norm{\proj \Gi u^{n}_{t}}_{W^{m-3,2}}^2\right)dt.
    \end{align*}
    We shall again use (\ref{Stokes is non neg}) to drop the viscous term. As with our previous estimates we shall bound by the absolute value, take the supremum and expectation followed by the Burkholder-Davis-Gundy Inequality. Estimating the nonlinear term by (\ref{nonlinear term control}), and the noise with Propositions \ref{transport and stretching 2} and \ref{transport only 2}, we arrive at 
    \begin{multline*}
        \E \left[\sup_{r \in [0,\delta_l]}\norm{u^n_{(\theta + r) \wedge \tau^{M}_n]}}_{W^{m-3,2}}^2 - \norm{u^n_{\theta \wedge \tau^{M}_n]}}_{W^{m-3,2}}^2 \right] \lesssim \\ \E \left[ \int_{\theta \wedge \tau^{M}_n}^{(\theta + \delta_l) \wedge \tau^{M}_n} \left(\norm{u^{n}_{t}}_{W^{1,\infty}} + 1\right) \norm{u^{n}_{t}}_{W^{m-3,2}}^2 dt \right] +
        \E \left[ \int_{\theta \wedge \tau^{M}_n}^{(\theta + \delta_l) \wedge \tau^{M}_n} \norm{u^{n}_{t}}_{W^{m-3,2}}^{4} dt \right]^{\frac{1}{2}}.
    \end{multline*} 
    Here we use the control granted by $\tau^M_n$ such that both integrands are bounded by a constant, therefore
    \begin{align*}
        \E \left[\sup_{r \in [0,\delta_l]}\norm{u^n_{(\theta + r) \wedge \tau^{M}_n}}_{W^{m-3,2}}^2 - \norm{u^n_{\theta \wedge \tau^{M}_n}}_{W^{m-3,2}}^2 \right] \lesssim \E\left(\delta_{l}\right) + \E\left(\delta_l^{\frac{1}{2}}\right)
    \end{align*}
    from which the Monotone Convergence Theorem with $\delta_l \rightarrow 0$ gives (\ref{simplified task}) hence the result.
    \end{proof}

\subsection{Existence of Local Smooth Solutions} \label{subs smooth}

We at first deduce the existence of our candidate local strong solution.

\begin{lemma} \label{candidate solution}
    There exists a stopping time $\tau^{M}_{\infty}$, a process $u:\Omega \mapsto C\left([0,\tau^{M}_{\infty}];W^{m-3,2}_{\sigma}\right)$ whereby $\sup_{r \in [0, \cdot \wedge \tau^{M}_{\infty}]}\norm{u_r}_{W^{m-3,2}}^2$ is adapted and $\mathbbm{P}-a.s.$ continuous, and a subsequence indexed by $(n_j)$ such that 
\begin{enumerate}
    \item $\tau^{M}_{\infty} \leq \tau^{M}_{n_j}$ $\mathbbm{P}-a.s.$,
    \item \label{second item} $\lim_{j \rightarrow \infty}\sup_{r \in [0 ,\tau^{M}_{\infty}]}\norm{u_r - u^{n_j}_r}_{W^{m-3,2}}^2 = 0$ $\mathbbm{P}-a.s.$.
\end{enumerate}
Moreover for any $R>0$ we can choose $M$ to be such that the stopping time \begin{equation} \label{another tauR}
        \tau^{R} := T \wedge \inf\left\{s \geq 0: \sup_{r \in [0 ,s \wedge \tau^{M}_{\infty}]}\norm{u_r}_{W^{m-3,2}}^2 \geq R \right\}
    \end{equation}
satisfies $\tau^{R} \leq \tau^{M}_{\infty}$ $\mathbbm{P}-a.s.$. Thus $\tau^{R}$ is simply $T \wedge \inf\left\{s \geq 0: \sup_{r \in [0 ,s ]}\norm{u_r}_{W^{m-3,2}}^2  \geq R \right\}$.
\end{lemma}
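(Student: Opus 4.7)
The plan is to invoke the abstract Cauchy result Proposition \ref{amazing cauchy lemma} in the appendix, feeding it the Cauchy property from Proposition \ref{cauchy prop} and the weak equicontinuity from Lemma \ref{weak equi lemma}, together with the uniform deterministic bound $\sup_{r\in[0,\tau^M_n]}\norm{u^n_r}_{W^{m-3,2}}^2 \leq M + \norm{u_0}_{W^{m-3,2}}^2$ that is built into the definition (\ref{taumt}) of $\tau^M_n$. The output of the abstract lemma is precisely a stopping time $\tau^M_\infty$, a process $u$, and a subsequence $(n_{j})$ realising items 1 and 2 of the statement. Adaptedness and continuity of $\sup_{r\in[0,\cdot\wedge\tau^M_\infty]}\norm{u_r}_{W^{m-3,2}}^2$ are transferred from the $u^{n_j}$, which are continuous and adapted in $W^{m-3,2}_\sigma$ on their respective time intervals, via the $\mathbb{P}$-a.s. uniform convergence of item \ref{second item}.

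To pass from the convergence in item \ref{second item} to the amenable form of the stopping time $\tau^R$, the first step is to transport the hitting threshold into the limit. Combining $\tau^M_\infty \leq \tau^M_{n_j}$ with the defining inequality of $\tau^M_{n_j}$ gives $\sup_{r\in[0,\tau^M_\infty]}\norm{u^{n_j}_r}_{W^{m-3,2}}^2 \leq M + \norm{u_0}_{W^{m-3,2}}^2$ for each $j$, and letting $j\to\infty$ using item \ref{second item} yields
\begin{equation} \nonumber
\sup_{r\in[0,\tau^M_\infty]}\norm{u_r}_{W^{m-3,2}}^2 \;\leq\; M + \norm{u_0}_{W^{m-3,2}}^2 \qquad \mathbb{P}\textnormal{-a.s.}
\end{equation}
One also expects the construction in Proposition \ref{amazing cauchy lemma} to realise $\tau^M_\infty$ as the truncated hitting time of the limit $u$ against that same threshold, in which case on $\{\tau^M_\infty < T\}$ the supremum at $\tau^M_\infty$ actually attains $M + \norm{u_0}_{W^{m-3,2}}^2$.

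With this in hand, the moreover part is quick: using that $u_0 \in L^\infty(\Omega; W^{m,2}_\sigma)$, pick $M$ larger than $R$, so that $R < M \leq M + \norm{u_0}_{W^{m-3,2}}^2$ uniformly in $\omega$. On $\{\tau^M_\infty = T\}$, $\tau^R \leq T = \tau^M_\infty$ trivially. On $\{\tau^M_\infty < T\}$, the preceding paragraph gives $\sup_{r\in[0,\tau^M_\infty]}\norm{u_r}^2_{W^{m-3,2}} \geq M + \norm{u_0}^2_{W^{m-3,2}} > R$, so by continuity the free hitting time $\tilde\tau^R := T \wedge \inf\{s : \sup_{r\in[0,s]}\norm{u_r}^2_{W^{m-3,2}} \geq R\}$ satisfies $\tilde\tau^R \leq \tau^M_\infty$. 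On $[0,\tilde\tau^R]$ the truncation $s \wedge \tau^M_\infty$ in (\ref{another tauR}) is inactive, so $\tau^R = \tilde\tau^R$ and in particular $\tau^R \leq \tau^M_\infty$.

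The main obstacle is not any single estimate but rather matching the hypotheses and output of the abstract Cauchy lemma with the concrete analytic setting: one must verify that the truncation defining $\tau^M_n$ is compatible with the framework of Proposition \ref{amazing cauchy lemma}, and that the limiting stopping time it produces is indeed the hitting time of $u$ against the threshold $M + \norm{u_0}_{W^{m-3,2}}^2$. Without that identification, the moreover part is false in general, since the abstract convergence only places $\tau^M_\infty$ below all $\tau^M_{n_j}$ and leaves no way to force $\tau^R \leq \tau^M_\infty$ just by taking $M$ large.
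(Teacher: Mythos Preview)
Your approach is correct and essentially identical to the paper's: the proof is a direct application of Proposition~\ref{amazing cauchy lemma} with $X_t := C\left([0,t];W^{m-3,2}_{\sigma}\right)$, the Cauchy hypothesis (\ref{supposition 1}) coming from Proposition~\ref{cauchy prop} and the equicontinuity hypothesis (\ref{supposition 2}) from Lemma~\ref{weak equi lemma}. Your concern in the final paragraph is misplaced, however: the ``moreover'' statement about choosing $M$ so that $\tau^R \leq \tau^{M}_{\infty}$ is already part of the \emph{conclusion} of Proposition~\ref{amazing cauchy lemma}, so there is nothing further to argue and no need to identify $\tau^M_\infty$ as a hitting time of the limit---the abstract lemma hands you this directly.
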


\begin{proof}
    This is a direct application of Proposition \ref{amazing cauchy lemma}, for the spaces $X_t := C\left([0,t];W^{m-3,2}_{\sigma} \right)$, where condition (\ref{supposition 1}) is shown in Proposition \ref{cauchy prop} and condition (\ref{supposition 2}) in Lemma \ref{weak equi lemma}. 
\end{proof}

\begin{proposition} \label{smooth solution existence prop}
    Fix any $R > \norm{u_0}_{L^\infty\left(\Omega;W^{m-3,2}\right)}^2$, choose $M$ and define $\tau^R$ as in Lemma \ref{candidate solution}. Then $(u_{\cdot \wedge \tau^R},\tau^R)$ is a local $W^{m-3,2}_{\sigma}-$strong solution of the equation (\ref{general itoo euler proper}), $u_{\cdot}\mathbbm{1}_{\cdot \wedge \tau^R}$ has a progressively measurable version\footnote{By a version, we mean a process $\py_{\cdot}$ such that $\py_{\cdot} = u_{\cdot}\mathbbm{1}_{\cdot \wedge \tau^R}$ $\mathbbm{P} \times \lambda-a.s.$, for $\lambda$ the Lebesgue measure, over the product space $\Omega \times [0,T]$.} in $W^{m-1,2}_{\sigma}$ and belongs $\mathbbm{P}-a.s.$ to $L^\infty\left([0,T];W^{m-1,2}_{\sigma}\right)$.
\end{proposition}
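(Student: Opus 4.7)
The plan is to construct $(u_{\cdot\wedge\tau^R},\tau^R)$ as the limit along the subsequence $(n_j)$ of Lemma \ref{candidate solution} of the stopped Navier--Stokes solutions $u^{n_j}_{\cdot\wedge\tau^R}$, and to pass to the limit term-by-term in (\ref{projected Ito general noise}) stopped at $\tau^R$. Since $R>\norm{u_0}_{L^\infty(\Omega;W^{m-3,2})}^2$, path-continuity of $u$ in $W^{m-3,2}_\sigma$ (furnished by Lemma \ref{candidate solution}) gives $\tau^R>0$ $\mathbbm{P}$-a.s., and Lemma \ref{candidate solution} also yields $\tau^R\leq\tau^M_\infty\leq\tau^M_{n_j}$. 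Continuity of $u$ in $W^{m-3,2}_\sigma$ is already in hand, and adaptedness of $u_{\cdot\wedge\tau^R}$ is inherited from the uniform convergence of the adapted processes $u^{n_j}_{\cdot\wedge\tau^R}$.

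For the identity I would verify convergence of each term in (\ref{projected Ito general noise}) to the corresponding term in (\ref{general itoo euler proper}) in $L^2_\sigma$, driven by the deterministic uniform bound $\sup_{r\leq\tau^R}\norm{u^{n_j}_r}_{W^{m-3,2}}^2\leq R$ and the embedding $W^{m-3,2}\hookrightarrow W^{1,\infty}$ available for $m\geq 6$. The nonlinear contribution obeys
\begin{equation*}
\norm{\mathcal{P}\mathcal{L}_{u^{n_j}}u^{n_j}-\mathcal{P}\mathcal{L}_{u}u}_{L^2} \lesssim \bigl(\norm{u^{n_j}}_{W^{m-3,2}}+\norm{u}_{W^{m-3,2}}\bigr)\norm{u^{n_j}-u}_{W^{m-3,2}},
\end{equation*}
so Lemma \ref{candidate solution}(\ref{second item}) delivers pathwise convergence of the drift integral in $L^2$. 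The viscous contribution vanishes pathwise since $\norm{\nu_{n_j}\int_0^{t\wedge\tau^R}Au^{n_j}_s\,ds}_{L^2}\lesssim \nu_{n_j}\sqrt{R}\to 0$. The It\^{o}--Stratonovich corrector is linear in $u$ and is controlled by Propositions \ref{transport and stretching 2} and \ref{transport only 2} together with $\sum_i\norm{\xi_i}_{W^{m+2,\infty}}^2<\infty$. The stochastic integral is handled by Burkholder--Davis--Gundy followed by the same noise bounds and dominated convergence justified by the uniform control from $\tau^R$.

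For the progressively measurable version in $W^{m-1,2}_\sigma$ and the $L^\infty([0,T];W^{m-1,2}_\sigma)$ pathwise regularity, the plan is a weak-$*$ compactness argument powered by the uniform estimate (\ref{uniform in viscosity bound}). The family $\{u^{n_j}\mathbbm{1}_{\cdot\leq\tau^R}\}$ is bounded in $L^2(\Omega;L^\infty([0,T];W^{m-1,2}_\sigma))$, which since $W^{m-1,2}_\sigma$ is a separable Hilbert space arises as the dual of $L^2(\Omega;L^1([0,T];W^{m-1,2}_\sigma))$. Banach--Alaoglu then extracts a weakly-$*$ convergent sub-subsequence with limit $\Phi$ in this dual space, so that $\Phi(\omega)\in L^\infty([0,T];W^{m-1,2}_\sigma)$ for $\mathbbm{P}$-a.e.\ $\omega$. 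Weak-$*$ convergence entails weak convergence in $L^2(\Omega\times[0,T];W^{m-1,2}_\sigma)$, while the strong convergence of Lemma \ref{candidate solution}(\ref{second item}), combined with the uniform $W^{m-3,2}$ bound, yields strong $L^2(\Omega\times[0,T];W^{m-3,2}_\sigma)$ convergence of $u^{n_j}\mathbbm{1}_{\cdot\leq\tau^R}$ to $u\mathbbm{1}_{\cdot\leq\tau^R}$; identifying both limits as elements of $L^2(\Omega\times[0,T];W^{m-3,2}_\sigma)$ forces $\Phi=u\mathbbm{1}_{\cdot\leq\tau^R}$ $\mathbbm{P}\times\lambda$-a.e., producing the required version. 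Progressive measurability of $\Phi$ follows because the class of progressively measurable processes is norm-closed, hence weakly closed, in $L^2(\Omega\times[0,T];W^{m-1,2}_\sigma)$.

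The main obstacle is the identification of the weak-$*$ limit $\Phi$ with $u\mathbbm{1}_{\cdot\leq\tau^R}$, which lives naturally in the weaker $W^{m-3,2}$ framework: one must reconcile the two duality pictures via the intermediate strong convergence. By contrast, the term-by-term passage to the limit in the equation is standard once the deterministic $W^{m-3,2}$ bound on $[0,\tau^R]$ is exploited.
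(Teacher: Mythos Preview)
Your proposal is correct and follows essentially the same route as the paper: the $W^{m-1,2}_\sigma$ regularity and progressive measurability come from Banach--Alaoglu in $L^2\left(\Omega;L^\infty([0,T];W^{m-1,2}_\sigma)\right)$ with limit identification through the strong $W^{m-3,2}_\sigma$ convergence, while the paper simply defers the term-by-term passage in the identity to an external reference rather than sketching it as you do. One small slip: the deterministic uniform bound $\sup_{r\leq\tau^R}\norm{u^{n_j}_r}_{W^{m-3,2}}^2\leq R$ is not quite right since $\tau^R$ is the hitting time of the \emph{limit} $u$; the correct bound is $M+\norm{u_0}_{W^{m-3,2}}^2$, inherited from $\tau^R\leq\tau^M_{n_j}$, which serves the same purpose.
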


\begin{proof}
    We first note that, as in the remark following Proposition \ref{amazing cauchy lemma}, $\tau^R$ is $\mathbbm{P}-a.s.$ positive. The regularity in $W^{m-3,2}_{\sigma}$ follows from item \ref{second item} as the $\mathbbm{P}-a.s.$ limit of adapted and continuous processes. For the regularity in $W^{m-1,2}_{\sigma}$ we use that $ \tau^R \leq \tau^{M}_{n_j}$ $\mathbbm{P}-a.s.$, so the uniform estimates (\ref{uniform in viscosity bound}) hold for the subsequence up until $\tau^R$. 
Relabelling the subsequence to $(u^n)$ for simplicity, then $\left(u^n_{\cdot}\mathbbm{1}_{\cdot \wedge \tau^R}\right)$ is uniformly bounded in $L^2\left(\Omega;L^\infty\left([0,T];W^{m-1,2}_{\sigma}\right)\right)$, such that we can extract a further subsequence which is weak* convergent by the Banach-Alaoglu Theorem, identifying $L^2\left(\Omega;L^\infty\left([0,T];W^{m-1,2}_{\sigma}\right)\right)$ with the dual of $L^2\left(\Omega;L^1\left([0,T];W^{m-1,2}_{\sigma}\right)\right)$. We know that $\left(u^n_{\cdot}\mathbbm{1}_{\cdot \wedge \tau^R}\right)$ converges to $u_{\cdot}\mathbbm{1}_{\cdot \wedge \tau^R}$ in $L^2\left(\Omega;L^\infty\left([0,T];W^{m-3,2}_{\sigma}\right)\right)$ by item \ref{second item} and the Dominated Convergence Theorem with the uniform bounds given by $\tau^R$, hence both convergences hold in the weak* topology of $L^2\left(\Omega;L^\infty\left([0,T];W^{m-3,2}_{\sigma}\right)\right)$ so by uniqueness of limits in this topology then $u_{\cdot}\mathbbm{1}_{\cdot \wedge \tau^R}$ is the weak* limit in $L^2\left(\Omega;L^\infty\left([0,T];W^{m-1,2}_{\sigma}\right)\right)$. Consequently $u_{\cdot}\mathbbm{1}_{\cdot \wedge \tau^R}$ belongs $\mathbbm{P}-a.s.$ to $L^\infty\left([0,T];W^{m-1,2}_{\sigma}\right)$. For the progressive measurability, we observe that the convergence holds in $L^2\left(\Omega \times [0,t];W^{m-1,2}_{\sigma}\right)$ for every $0 \leq t \leq T$. As the approximating sequence is progressively measurable we can equip $\Omega \times [0,t]$ with the $\mathcal{F}_t \times \mathcal{B}\left([0,t]\right)$ sigma-algebra, hence the limit is measurable with respect to this sigma-algebra which justifies the progressive measurability of the version obtained from the limit.\\

 With this regularity established it now only remains to show that the limiting pair satisfies the identity (\ref{local identity}). For this we refer to [\cite{goodair2025navier}] Proposition 3.5, where the inviscid limit of Navier-Stokes with transport-stretching noise is shown to satisfy the stochastic Euler equation weakly under a much weaker topology of convergence. The transport noise case follows in exactly the same way given the estimates of Proposition \ref{transport only 2}, so we omit further details and conclude the proof. 

\end{proof}

\subsection{Optimising Smoothness of the Initial Condition} \label{subs initial condition relax}

We improve the results of the previous subsection by showing that we can construct a solution with regularity matching the smoothness of the initial condition.

\begin{proposition} \label{prop relaxing smooth}
Fix $m' \geq 3$, let $u_0 \in L^\infty\left( \Omega ; W^{m',2}_{\sigma} \right)$ be $\mathcal{F}_0-$measurable and each $\xi_i \in L^2_{\sigma} \cap  W^{m'+6,\infty}$ such that $\sum_{i=1}^\infty \norm{\xi_i}_{W^{m'+5,\infty}}^2 < \infty$. Fix any $R' > \norm{u_0}_{L^\infty\left(\Omega;W^{m',2}\right)}^2$. There exists a pair $(u_{\cdot \wedge \tau^{R'}}, \tau^{R'})$ which is a local $W^{m',2}_{\sigma}-$strong solution of the equation (\ref{general itoo euler proper}), where
$$\tau^{R'} = T \wedge \inf\left\{s \geq 0: \sup_{r \in [0 ,s ]}\norm{u_r}_{W^{m',2}}^2  \geq R' \right\}.$$
%with the property that $u_{\cdot \wedge \tau^{R'}}$ is an adapted process in $W^{m',2}_{\sigma}$ such that for $\mathbbm{P}-a.e.$ $\omega$, $u_{\cdot \wedge \tau^{R'}}(\omega) \in C\left([0,T];W^{m',2}_{\sigma}\right)$.

%from Proposition \ref{cauchy prop} we observe the limit
 %  $$\lim_{l \rightarrow \infty}\sup_{n \geq l} \mathbbm{E} \left( \sup_{r \in [0,\tau^R]}\norm{u^n_r - u^l_r}_{W^{m-2,2}}^2 \right) = 0$$ 
 %  or equivalently, by the continuity of the processes,
 %  $$\lim_{l \rightarrow \infty}\sup_{n \geq l} \mathbbm{E} \left( \sup_{r \in [0,T]}\norm{u^n_{r \wedge \tau^R} - u^l_{r \wedge \tau^R}}_{W^{m-2,2}}^2 \right) = 0.$$ 
%Therefore the sequence $(u^n_{\cdot \wedge \tau^R})$ is Cauchy in $L^2\left(\Omega; C\left([0,T];W^{m-2,2}_{\sigma} \right) \right)$ so admits a limit in this space. Furthermore one can take a finer subsequence of the $(u^n)$ which converges $\mathbbm{P}-a.s.$ in $C\left([0,\tau^R];W^{m-2,2}_{\sigma}\right)$. The limiting process must agree with the $\mathbbm{P}-a.s.$ limit in $L^1\left([0,\tau^R];W^{1,\infty}\right)$ which exists and is given by $u$ in Lemma \ref{candidate solution}, item \ref{second item}. In particular $u_{\cdot \wedge \tau^R}$ belongs pathwise to $C\left([0,T];W^{m-2,2}_{\sigma}\right)$ and as the $\mathbbm{P}-a.s.$ limit in this space of adapted processes, retains adaptedness in $W^{m-2,2}_{\sigma}$ as required.
    
\end{proposition}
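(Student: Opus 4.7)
The plan is to reduce Proposition \ref{prop relaxing smooth} to the already-established Proposition \ref{smooth solution existence prop} by smoothing the initial datum, then pass to the limit with a Cauchy argument in a coarser norm while preserving the target regularity by weak compactness. The key observation is that Proposition \ref{smooth solution existence prop} requires initial data in $W^{m,2}_\sigma$ but yields pathwise continuity only in $W^{m-3,2}_\sigma$; so if I take $m = m' + 3 \geq 6$, that continuity already lands in $W^{m',2}_\sigma$ and what remains is to bridge the three-derivative gap in the initial condition by approximation. Concretely, I would set $u_0^n := \mathcal{P}_n u_0$, the projection onto the span of the first $n$ Stokes eigenfunctions. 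Each $u_0^n$ is $\mathcal{F}_0$-measurable, belongs to $L^\infty(\Omega; W^{k,2}_\sigma)$ for every $k$, satisfies $\norm{u_0^n}_{W^{m',2}} \leq \norm{u_0}_{W^{m',2}}$ $\mathbbm{P}$-a.s., and converges to $u_0$ in $W^{m',2}_\sigma$ $\mathbbm{P}$-a.s.\ (hence in $L^\infty(\Omega; W^{m',2}_\sigma)$ by dominated convergence). Fix $R > \norm{u_0}_{L^\infty(\Omega;W^{m',2})}^2$ so that $R$ dominates $\norm{u_0^n}_{W^{m',2}}^2$ uniformly; since the noise hypotheses $\xi_i \in W^{m'+6,\infty}$ with $\sum \norm{\xi_i}_{W^{m'+5,\infty}}^2 < \infty$ match the $W^{m+3,\infty}$ requirement, Proposition \ref{smooth solution existence prop} produces local strong solutions $(u^n_{\cdot \wedge \tau^{R,n}}, \tau^{R,n})$ of (\ref{general itoo euler proper}) with initial data $u_0^n$, where $\tau^{R,n} = T \wedge \inf\{s \geq 0 : \sup_{r \in [0,s]}\norm{u^n_r}_{W^{m',2}}^2 \geq R\}$, each $u^n$ lies in $C([0,T];W^{m',2}_\sigma)$ and $u^n \mathbbm{1}_{\cdot \leq \tau^{R,n}} \in L^\infty([0,T]; W^{m'+2,2}_\sigma)$ $\mathbbm{P}$-a.s.

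Next I would show $(u^n)$ is Cauchy in $L^2(\Omega; C([0, \tau^{R,n} \wedge \tau^{R,j}]; W^{m'-1,2}_\sigma))$ by applying the It\^o formula to $\norm{u^n - u^j}_{W^{m'-1,2}}^2$ in direct analogy with Proposition \ref{cauchy prop}. Two welcome simplifications arise: there is no viscous contribution (the target is the Euler equation) and no viscosity gap $\nu_n - \nu_j$; instead the driving source of the Cauchy estimate is the initial-data error $\norm{u_0^n - u_0^j}_{W^{m'-1,2}}^2$. The nonlinearity is controlled by (\ref{nonlinear term control}) together with the Sobolev embedding $W^{m',2} \hookrightarrow W^{1,\infty}$ valid in three dimensions for $m' \geq 3$, and the noise contributions by Proposition \ref{main prop}. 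Burkholder--Davis--Gundy and a standard Gronwall argument yield
\[
\E \sup_{r \leq \tau^{R,n}\wedge \tau^{R,j}}\norm{u^n_r - u^j_r}_{W^{m'-1,2}}^2 \;\leq\; C_R\, \E \norm{u_0^n - u_0^j}_{W^{m'-1,2}}^2 \;\longrightarrow\; 0.
\]
A weak equicontinuity statement analogous to Lemma \ref{weak equi lemma} follows from the same It\^o-formula bookkeeping, since $\tau^{R,n}$ furnishes a uniform pointwise bound on $\norm{u^n}_{W^{m',2}}$. Proposition \ref{amazing cauchy lemma} then produces a stopping time $\tau^R_\infty$ with $\tau^R_\infty \leq \tau^{R,n}$ along a subsequence, and a limit $u \in C([0,\tau^R_\infty]; W^{m'-1,2}_\sigma)$. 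Repeating the Banach--Alaoglu argument from the proof of Proposition \ref{smooth solution existence prop} on $L^2(\Omega; L^\infty([0,T]; W^{m',2}_\sigma))$ shows that $u\mathbbm{1}_{\cdot \leq \tau^R_\infty}$ is progressively measurable in $W^{m',2}_\sigma$ and $\mathbbm{P}$-a.s.\ in $L^\infty([0,T]; W^{m',2}_\sigma)$. Passage to the limit in (\ref{projected Ito general noise}) (with no viscous term this time) reproduces the identity (\ref{local identity}) for $(u_{\cdot \wedge \tau^R_\infty}, \tau^R_\infty)$ with initial datum $u_0$; the stopping time $\tau^{R'}$ from the statement sits below $\tau^R_\infty$ by choosing $R$ sufficiently large, exactly as in the final paragraph of Lemma \ref{candidate solution}.

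The principal obstacle is upgrading continuity of the limit from $W^{m'-1,2}_\sigma$ (where the Cauchy argument lives) to $W^{m',2}_\sigma$, as demanded by Definition \ref{v valued local def}. I would handle this by a weak-strong continuity argument in the Hilbert space $W^{m',2}_\sigma$. Weak continuity in $W^{m',2}_\sigma$ is inherited from the $L^\infty([0,T]; W^{m',2}_\sigma)$ bound combined with strong continuity in the coarser space $W^{m'-1,2}_\sigma$. For the norm, I would apply the It\^o formula to $\norm{u_t}_{W^{m',2}}^2$ along the solution: this is legitimate precisely because Proposition \ref{main prop} furnishes closed controls on the It\^o--Stratonovich drift and quadratic variation at the top level $W^{m',2}$, exhibiting $t \mapsto \norm{u_t}_{W^{m',2}}^2$ as a semimartingale with a $\mathbbm{P}$-a.s.\ continuous modification. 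Weak plus norm continuity then upgrades to strong $W^{m',2}_\sigma$-continuity of $u$ on $[0,\tau^R_\infty]$, completing the construction.
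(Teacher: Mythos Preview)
Your overall strategy---regularise the initial datum via $\mathcal{P}_n$, invoke Proposition~\ref{smooth solution existence prop} with $m=m'+3$, then run a Cauchy/equicontinuity argument through Proposition~\ref{amazing cauchy lemma}---is exactly the paper's approach. The difference is the space in which you run the Cauchy argument, and this is where a genuine gap appears.

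You establish Cauchy convergence only in $W^{m'-1,2}_\sigma$ and then propose to recover $C\bigl([0,T];W^{m',2}_\sigma\bigr)$ continuity of the limit $u$ by applying the It\^o formula to $\norm{u_t}_{W^{m',2}}^2$. That step fails: the abstract It\^o formula (Proposition~\ref{Ito formula}) with $\mathcal{H}_2=W^{m',2}_\sigma$ requires either that $u\in L^2\bigl([0,T];\mathcal{H}_1\bigr)$ for some denser $\mathcal{H}_1$, or that the drift $f$ lies in $W^{m',2}_\sigma$. For Euler there is no viscous gain, so $u$ is merely in $L^\infty\bigl([0,T];W^{m',2}_\sigma\bigr)$; meanwhile the drift contains $(\mathcal{P}\mathcal{G}_i)^2u$, which is only $W^{m'-2,2}_\sigma$. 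Neither hypothesis is met, so you cannot conclude that $t\mapsto\norm{u_t}_{W^{m',2}}^2$ is a continuous semimartingale, and the weak-plus-norm argument collapses.

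The paper sidesteps this entirely by running the Cauchy and equicontinuity estimates \emph{directly in $W^{m',2}_\sigma$}. This is legitimate precisely because of the ingredient you already recorded but did not exploit: the approximants $u^n$ lie in $L^\infty\bigl([0,T];W^{m'+2,2}_\sigma\bigr)$, so the identity \eqref{local identity} holds for them in $W^{m',2}_\sigma$ and the It\^o formula applies to $\norm{u^n-u^j}_{W^{m',2}}^2$ and $\norm{u^n}_{W^{m',2}}^2$. The Cauchy limit is then automatically in $C\bigl([0,T];W^{m',2}_\sigma\bigr)$, with the initial-data error $\norm{\mathcal{P}_nu_0-\mathcal{P}_ju_0}_{W^{m',2}}^2$ replacing the viscosity in Proposition~\ref{cauchy prop}. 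No upgrade step is needed.
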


\begin{proof}
To construct the desired local strong solution, we consider an approximating sequence of local strong solutions to (\ref{general itoo euler proper}) for an initial condition regularised by the projection operator $\mathcal{P}_n$ specified in Subsection \ref{subs functional anal}. For every $n\in \N$ we have that $\mathcal{P}_nu_0 \in L^\infty\left(\Omega;W^{m'+3,2}_{\sigma} \right)$. Therefore if we fix any $M' > 1$, for every $n' \in \N$, by Proposition \ref{smooth solution existence prop} there exists a local $W^{m',2}_{\sigma}-$strong solution $(u^{n'}_{\cdot \wedge \tau^{M'}_{n'}}, \tau^{M'}_{n'})$ of the equation (\ref{general itoo euler proper}) for the initial condition $u^{n'}_0 = \mathcal{P}_{n'}u_0$, whereby
$$\tau^{M'}_{n'} = T \wedge \inf\left\{s \geq 0: \sup_{r \in [0 ,s ]}\norm{u^{n'}_r}_{W^{m',2}}^2  \geq M' + \norm{u_0}_{W^{m',2}}^2 \right\}.$$
 Proposition \ref{smooth solution existence prop} is invoked by taking $m$ of Proposition \ref{navier stokes existence prop} to be $m'+3$, and choosing $R > M' + \norm{u_0}_{L^\infty\left(\Omega;W^{m',2}\right)}^2$. Furthermore Proposition \ref{smooth solution existence prop} implies that $u^{n'}_{\cdot}\mathbbm{1}_{\cdot \leq \tau^{M'}_{n'}}$ has a progressively measurable version in $W^{m'+2,2}_{\sigma}$ and belongs $\mathbbm{P}-a.s.$ to $L^\infty\left([0,T]; W^{m'+2,2}_{\sigma} \right)$. These solutions with regularised initial condition will now play the role of the approximating sequence of Navier-Stokes equations used in proving Proposition \ref{smooth solution existence prop}. Indeed we claim two properties:
\begin{enumerate}
    \item  We have that
    $$\lim_{j' \rightarrow \infty}\sup_{n' \geq j'} \mathbbm{E} \left( \sup_{r \in [0,\tau^{M'}_{n'} \wedge \tau^{M'}_{j'}]}\norm{u^{n'}_r - u^{j'}_r}_{W^{m',2}}^2 \right) = 0. $$
    
    \item   Let $\theta$ be a stopping time and $(\delta_l)$ a sequence of stopping times which converge to $0$ $\mathbbm{P}-a.s.$. Then
    $$\lim_{l \rightarrow \infty}\sup_{n'\in\N}\mathbbm{E}\left[\sup_{r \in [0 ,(\theta + \delta_l)\wedge \tau^{M'}_{n'}]}\norm{u^{n'}_r}_{W^{m',2}}^2 - \sup_{r \in [0 , \theta\wedge \tau^{M'}_{n'}]}\norm{u^{n'}_r}_{W^{m',2}}^2 \right] =0. $$
\end{enumerate}
A verification of these properties is near identical to their counterparts of Proposition \ref{cauchy prop} and Lemma \ref{weak equi lemma}. We must note that the additional $W^{m' + 2,2}_{\sigma}$ regularity ensures that the solutions satisfy the identity of (\ref{local identity}) in $W^{m',2}_{\sigma}$ so that we can apply the It\^{o} Formula in this space. For the Cauchy property, instead of the viscosity appearing on the right hand side of the estimates we have $\norm{\mathcal{P}_{n'}u_0 - \mathcal{P}_{j'}u_0}_{W^{m',2}}^2$ which similarly approaches zero. The weak equicontinuity proof is unchanged aside from the absence of the viscous term. From here, the proof follows exactly as in Subsection \ref{subs smooth}.

%the same except for the need for new uniform estimates, which follow as in (\ref{uniform in viscosity bound}) for the control \begin{equation} \nonumber
%\mathbbm{E}\left( \sup_{r \in [0,\tau^{M'}_{n'}]}\norm{u^{n'}_r}_{W^{m',2}}^2 \right) \leq C_{M'}. %\mathbbm{E}\left( \norm{u_0}_{A^{\frac{m-1}{2}}}^2\right)
%\end{equation}
%From here, the proof follows exactly as in Subsection \ref{subs smooth}. 

\end{proof}

\subsection{Uniqueness of Local Strong Solutions} \label{subs unique}

\begin{proposition} \label{Prop unique}
    Fix $3 \leq m \in \N$ and let $u_0: \Omega \rightarrow W^{m,2}_{\sigma}$ be $\mathcal{F}_0-$ measurable. Suppose that $(u,\tau)$, $(v, \theta)$ are two local $W^{m,2}_{\sigma}-$strong solutions of the equation (\ref{general itoo euler proper}). Then \begin{equation} \nonumber\mathbbm{P}\left(\left\{\omega \in \Omega: u_{t}(\omega) =  v_{t}(\omega)  \quad \forall t \in [0, T \wedge \tau(\omega) \wedge \theta(\omega)] \right\} \right) = 1. \end{equation}
\end{proposition}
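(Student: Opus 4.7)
The plan is to estimate the $L^2_\sigma$ norm of the difference $w_t := u_t - v_t$ on the common interval of existence. First I would subtract the two identities (\ref{local identity}) satisfied by $(u,\tau)$ and $(v,\theta)$ to see that $w$ solves
\begin{equation*}
w_t = - \int_0^{t \wedge \tau \wedge \theta}\mathcal{P}\left(\mathcal{L}_{u_s}u_s - \mathcal{L}_{v_s}v_s\right) ds + \frac{1}{2}\int_0^{t \wedge \tau \wedge \theta}\sum_{i=1}^\infty \left(\mathcal{P}\mathcal{G}_i\right)^2 w_s\, ds + \int_0^{t \wedge \tau \wedge \theta}\mathcal{P}\mathcal{G}w_s\, d\mathcal{W}_s
\end{equation*}
with $w_0 = 0$. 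Since $u_\cdot, v_\cdot \in C([0,T];W^{m,2}_\sigma)$ with $m \geq 3$, the bounds (\ref{nonlinear term control}), (\ref{L_ibound}), (\ref{boundsonB_i}) guarantee that the drift lies path-wise in $L^2_\sigma$, so the It\^{o} formula (Proposition \ref{Ito formula}) may be invoked at the level of $\norm{w_t}^2_{L^2}$.

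The second step is to exploit the structural cancellations. I would decompose
$$\mathcal{L}_{u_s}u_s - \mathcal{L}_{v_s}v_s = \mathcal{L}_{w_s}u_s + \mathcal{L}_{v_s}w_s,$$
pass the Leray projector onto $w_s \in L^2_\sigma$ using self-adjointness, eliminate $\inner{\mathcal{L}_{v_s}w_s}{w_s}$ by the cancellation (\ref{cancellationproperty'}), and bound the surviving term by $\norm{u_s}_{W^{1,\infty}}\norm{w_s}^2_{L^2}$ via H\"{o}lder. For the It\^{o}--Stratonovich corrector and the stochastic integral's quadratic variation I would invoke Proposition \ref{main prop} at $m = 0$, supplying
$$\sum_{i=1}^\infty\left(\inner{(\mathcal{P}\mathcal{G}_i)^2 w_s}{w_s} + \norm{\mathcal{P}\mathcal{G}_i w_s}^2_{L^2}\right) \leq c\norm{w_s}^2_{L^2},$$
with $c$ absorbing the summable $\sum_i\norm{\xi_i}^2_{W^{2,\infty}}$. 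Together these produce the pathwise inequality
$$\norm{w_{t \wedge \tau \wedge \theta}}^2_{L^2} \leq c\int_0^{t\wedge\tau\wedge\theta}\left(1 + \norm{u_s}_{W^{1,\infty}}\right)\norm{w_s}^2_{L^2}\, ds + M_t$$
for a local martingale $M$ with $M_0 = 0$.

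Finally, to close the argument rigorously I would localise by
$$\zeta_R := \tau \wedge \theta \wedge \inf\left\{s \geq 0: \norm{u_s}_{W^{m,2}} + \norm{v_s}_{W^{m,2}} \geq R\right\},$$
which is $\mathbbm{P}-a.s.$ monotone increasing to $\tau \wedge \theta$ by the path continuity in $W^{m,2}_\sigma$, and which makes $\norm{u_s}_{W^{1,\infty}}$ deterministically bounded on $[0,\zeta_R]$ through the embedding $W^{m,2} \hookrightarrow W^{1,\infty}$ (valid since $m \geq 3$). Taking expectations in the stopped inequality renders the stochastic integral a mean-zero martingale, and the standard Gr\"{o}nwall lemma yields $\mathbbm{E}\norm{w_{t \wedge \zeta_R}}^2_{L^2} = 0$ for every $t \in [0,T]$. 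Sending $R \to \infty$ and using the $L^2_\sigma$-continuity of $w$ extends the identity $u = v$ to the whole of $[0,T\wedge \tau \wedge \theta]$. The subtlety I anticipate is purely bookkeeping: one must confirm that the drift genuinely takes values in $L^2_\sigma$ to apply the It\^{o} formula there rather than in a larger space, but the regularity of the two solutions combined with Proposition \ref{main prop} at $m=0$ supplies exactly this input.
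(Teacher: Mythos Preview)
Your argument is correct and is precisely the standard $L^2$-energy estimate that the paper has in mind when it defers the details to [\cite{goodair2023zero}] Proposition 3.10. One minor bookkeeping point: Proposition \ref{main prop} is stated for $m \in \N$, so it does not literally cover the case $m=0$ that you invoke; however, as the introduction already observes, the $L^2$ control on (\ref{key noise to control}) is classical (and indeed follows directly from Lemma \ref{lemma for conservation B_i} with $\alpha=\beta=0$ for $B_i$, or from orthogonality of $\mathcal{P}$ and $I-\mathcal{P}$ for $\mathcal{L}_{\xi_i}$), so this is a referencing issue rather than a genuine gap.
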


\begin{proof}
   Uniqueness of weak solutions to the 2D stochastic Navier-Stokes equation, for a general noise including transport and transport-stretching, was shown in [\cite{goodair2023zero}] Proposition 3.10. Given the additional regularity on solutions here our task is simpler and contained in the proof from [\cite{goodair2023zero}], hence we omit the details. 
\end{proof}

\subsection{Maximality and Blow-Up} \label{subs maximality}

We now demonstrate that the unique $W^{m,2}_{\sigma}-$strong solution extends to a maximal $W^{m,2}_{\sigma}-$strong solution, and prove our blow-up criterion.

%We first prove Theorem \ref{main existence for euler} in the case of a bounded initial condition.

%\begin{definition} \label{v valued local def in m}
%Fix $3 \leq m \in \N$, let $u_0:\Omega \rightarrow W^{m,2}_{\sigma}$ be $\mathcal{F}_0-$ measurable. A pair $(u,\tau)$ where $\tau$ is a stopping time such that for $\mathbbm{P}-a.e.$ $\omega$, $\tau(\omega) \in (0,T]$, and $u$ is an adapted process in $W^{m,2}_{\sigma}$ such that for $\mathbbm{P}-a.e.$ $\omega$, $u_{\cdot}(\omega) \in C\left([0,T];W^{m,2}_{\sigma}\right)$, is said to be a local $W^{m,2}_{\sigma}-$strong solution of the equation (\ref{general itoo euler proper}) if the identity
%\begin{equation}
 %     u_t = u_0 - \int_0^{t \wedge \tau}\mathcal{P}\mathcal{L}_{u_s}u_s\ ds + \frac{1}{2}\int_0^{t \wedge \tau} \sum_{i=1}^\infty \left(\mathcal{P}\mathcal{G}_i\right)^2u_s \, ds + \int_0^{t \wedge \tau} \mathcal{P}\mathcal{G}u_s d\mathcal{W}_s. 
%\end{equation}
%holds $\mathbbm{P}-a.s.$ in $L^2_{\sigma}$ for all $t \in [0,T]$.
%\end{definition}

%The definition of a unique maximal $W^{m,2}_{\sigma}-$strong solution follows exactly as in Definitions \ref{V valued maximal definition}, \ref{v valued maximal unique}.

\begin{proposition} \label{penultimate prop}
  Fix $3 \leq m \in \N$, let $u_0 \in L^\infty\left( \Omega ; W^{m,2}_{\sigma} \right)$ be $\mathcal{F}_0-$measurable and each $\xi_i \in L^2_{\sigma} \cap  W^{m+6,\infty}$ such that $\sum_{i=1}^\infty \norm{\xi_i}_{W^{m+5,\infty}}^2 < \infty$.  There exists a unique maximal $W^{m,2}_{\sigma}-$strong solution $(u,\Theta)$ of the equation (\ref{general itoo euler proper}) with the properties that:
\begin{enumerate}
    \item \label{a first item} At $\mathbbm{P}-a.e.$ $\omega$ for which $\Theta(\omega)<T$, we have that \begin{equation} \nonumber \sup_{r \in [0 ,\Theta(\omega))}\norm{u_r(\omega)}_{W^{m,2}}^2  = \infty.\end{equation}
    \item \label{a second item} If $\tau$ is a stopping time such that for $\mathbbm{P}-a.e.$ $\omega$, $\tau(\omega) \in (0,T]$ and $$\sup_{r \in [0 ,\tau(\omega))}\norm{u_r(\omega)}_{W^{m,2}}^2 < \infty,$$
     then $(u_{\cdot \wedge \tau}, \tau)$ is a local $W^{m,2}_{\sigma}-$strong solution of the equation (\ref{general itoo euler proper}). %we have that $u_{\cdot \wedge \tau}$ is an adapted process in $W^{m,2}_{\sigma}$ such that for $\mathbbm{P}-a.e.$ $\omega$, $u_{\cdot \wedge \tau}(\omega) \in C\left([0,T];W^{m,2}_{\sigma}\right)$.
\end{enumerate}
\end{proposition}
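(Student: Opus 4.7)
The plan is to construct the maximal solution by patching together a countable family of local strong solutions provided by Proposition \ref{prop relaxing smooth}, using the uniqueness statement of Proposition \ref{Prop unique} to glue them consistently. Concretely, for each integer $k > \norm{u_0}_{L^\infty(\Omega;W^{m,2})}^2$, Proposition \ref{prop relaxing smooth} yields a local $W^{m,2}_\sigma$-strong solution $(u^k_{\cdot \wedge \tau^k},\tau^k)$ with first-hitting stopping time
\[
\tau^k = T \wedge \inf\big\{s \geq 0: \sup_{r \in [0,s]}\norm{u^k_r}_{W^{m,2}}^2 \geq k\big\}.
\]
By Proposition \ref{Prop unique} together with a short first-hitting argument (if $\tau^{k'} < \tau^k$ for $k \leq k'$, then on $[0,\tau^{k'}]$ the two processes agree and the sup hits $k' > k$, contradicting the definition of $\tau^k$), the sequence $(\tau^k)$ is monotone non-decreasing and the solutions coincide on overlapping intervals. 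Define $\Theta := \lim_{k \to \infty}\tau^k$ and $u_t := u^k_t$ for $t \le \tau^k$; the pair $(u,\Theta)$ with approximating sequence $(\tau^k)$ then satisfies Definition \ref{V valued maximal definition}.

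Next I would establish the blow-up criterion, property \ref{a first item}. By continuity of $u^k$ in $W^{m,2}_\sigma$ combined with left-continuity of $s \mapsto \sup_{r \in [0,s]}\norm{u^k_r}_{W^{m,2}}^2$, the event $\{\tau^k < T\}$ forces $\sup_{r \in [0,\tau^k]}\norm{u^k_r}_{W^{m,2}}^2 = k$. Assume for contradiction that on a set of positive probability, $\Theta < T$ and $\sup_{r<\Theta}\norm{u_r}_{W^{m,2}}^2 \le M^* < \infty$. For $k > M^*$, either $\tau^k < \Theta$, in which case $u = u^k$ on $[0,\tau^k]$ forces $\sup_{[0,\tau^k]}\norm{u_r}_{W^{m,2}}^2 = k > M^*$, a contradiction; or $\tau^k = \Theta < T$ for all $k > M^*$, in which case $\sup_{r < \Theta}\norm{u_r}_{W^{m,2}}^2 = k$ would need to hold simultaneously for infinitely many $k$, again absurd. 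Hence the blow-up occurs on $\{\Theta < T\}$ $\mathbb{P}$-a.s.

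For property \ref{a second item}, given a stopping time $\tau$ with $\sup_{r < \tau}\norm{u_r}_{W^{m,2}}^2 < \infty$ almost surely, I would decompose $\Omega$ along the increasing events $A_k := \{\sup_{r<\tau}\norm{u_r}_{W^{m,2}}^2 < k\}$, which have union of full measure. On $A_k$, a first-hitting argument analogous to the one above forces $\tau \le \tau^k$ (if $\tau > \tau^k$ and $\tau^k < T$ then the sup of $\norm{u}_{W^{m,2}}^2$ on $[0,\tau^k]$ equals $k$, contradicting $A_k$), so $(u_{\cdot \wedge \tau},\tau)$ coincides on $A_k$ with the restriction of $(u^k_{\cdot \wedge \tau^k},\tau^k)$ stopped further at $\tau$. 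Since each such restriction is itself a local $W^{m,2}_\sigma$-strong solution, the identity (\ref{local identity}) and the required regularity hold $\mathbb{P}$-almost surely on the union $\bigcup_k A_k$.

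Finally, uniqueness of the maximal solution in the sense of Definition \ref{v valued maximal unique} follows by applying property \ref{a second item} to any competing maximal pair $(v,\Gamma)$ with approximating sequence $(\gamma_j)$: continuity of $v$ in $W^{m,2}_\sigma$ makes $\sup_{r \le \gamma_j}\norm{v_r}_{W^{m,2}}^2$ finite, so by Proposition \ref{Prop unique} applied to the local solutions $(v_{\cdot \wedge \gamma_j},\gamma_j)$ and $(u_{\cdot \wedge \tau^k}, \tau^k)$ we obtain $\gamma_j \le \Theta$ and pathwise agreement; letting $j \to \infty$ yields $\Gamma \le \Theta$ and $v = u$ on $[0,\Gamma)$. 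The reverse inequality $\Theta \le \Gamma$ is obtained symmetrically, or by combining property \ref{a first item} for $u$ with the continuity of $v$: on $\{\Gamma < \Theta\}$, $v$ extends past $\Gamma$ via $u$ without blowing up, contradicting maximality of $(v,\Gamma)$. The main obstacle throughout is the careful handling of the first-hitting time comparisons, especially the boundary case $\tau^k = \Theta < T$, which requires both left-continuity of the sup-norm process and the monotone-in-$k$ structure to close the argument.
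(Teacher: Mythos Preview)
Your approach is essentially the same as the paper's: construct the maximal solution by patching the local solutions $(u_{\cdot\wedge\tau^k},\tau^k)$ from Proposition \ref{prop relaxing smooth} via uniqueness, prove item \ref{a first item} by a contradiction on the first-hitting thresholds, and prove item \ref{a second item} by a level-set decomposition $A_k$. The paper outsources the patching step to an abstract result ([\cite{goodair2024stochastic}] Theorem 3.3, Corollary 3.1) and then argues items \ref{a first item}, \ref{a second item} exactly as you do; your explicit construction is a valid substitute. One small point: in your uniqueness paragraph, once you have shown $\Gamma\leq\Theta$ for \emph{any} competing pair, you do not need the ``reverse inequality obtained symmetrically'' --- simply invoke the assumed maximality of $(v,\Gamma)$ from Definition \ref{V valued maximal definition} (namely $\Gamma\leq\Theta$ implies $\Gamma=\Theta$) to conclude.
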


\begin{proof}
    The existence of a unique maximal $W^{m,2}_{\sigma}-$strong solution $(u,\Theta)$ follows directly from the existence (Proposition \ref{prop relaxing smooth}) and uniqueness (Proposition \ref{Prop unique}) of local $W^{m,2}_{\sigma}-$strong solutions, for which we refer to [\cite{goodair2024stochastic}] Theorem 3.3. We note that adaptedness here is truly of the solution and not a version of it, so the progressive measurability stressed by the `regular' solution of [\cite{goodair2024stochastic}] Theorem 3.3 is satisfied. Towards the blow-up criterion, we recall [\cite{goodair2024stochastic}] Corollary 3.1 that if $\gamma$ were any stopping time such that $(u_{\cdot \wedge \gamma}, \gamma)$ is a local $W^{m,2}_{\sigma}-$strong solution then $\gamma \leq \Theta$ $\mathbbm{P}-a.s.$. By uniqueness then for any $R' > \norm{u_0}_{L^\infty\left(\Omega;W^{m,2}\right)}^2$, the local $W^{m,2}_{\sigma}-$strong solution $(u_{\cdot \wedge \tau^{R'}}, \tau^{R'})$ constructed in Proposition \ref{prop relaxing smooth} must be the stopped process and first hitting time of the unique maximal solution. Therefore, for every $R>\norm{u_0}_{L^\infty\left(\Omega;W^{m,2}\right)}^2$ the stopping time $\tau^R$ given by 
    \begin{equation} \label{given taur} \tau^{R} = T \wedge \inf\left\{s \geq 0: \sup_{r \in [0,s]}\norm{u_r}_{W^{m,2}}^2  \geq R \right\}
    \end{equation}
    satisfies that $\tau^R \leq \Theta$ $\mathbbm{P}-a.s.$. Let us define the set
    $$ \mathscr{A} = \left\{\omega \in \Omega: \Theta(\omega) < T, \sup_{r \in [0,\Theta(\omega))}\norm{u_r(\omega)}_{W^{m,2}}^2  < \infty \right\}.$$
    To prove item \ref{a first item} it is sufficient to show that $\mathbbm{P}(\mathscr{A}) = 0$. Observing that
    $$\mathscr{A} = \bigcup_k \mathscr{A}_k, \qquad \mathscr{A}_k = \left\{\omega \in \Omega: \Theta(\omega) < T, \sup_{r \in [0,\Theta(\omega))}\norm{u_r(\omega)}_{W^{m,2}}^2  \leq k \right\} $$
    then it is instead sufficient to show that $\mathbbm{P}(\mathscr{A}_k) = 0$ for every $k$.  $\mathbbm{P}-a.s.$ in $\mathscr{A}_k$, as $\tau^{k+1} \leq \Theta < T$ certainly $\tau^{k+1} < T$ so $\tau^{k+1} = \inf\left\{s \geq 0: \sup_{r \in [0,s]}\norm{u_r}_{W^{m,2}}^2  \geq k+1 \right\}$. As $\tau^{k+1} \leq \Theta$ then $\sup_{r \in [0 ,\Theta)}\norm{u_s}_{W^{m,2}}^2  \geq k+1$ which would contradict the definition of $\mathscr{A}_k$, hence $\mathbbm{P}(\mathscr{A}_k) = 0$ as required. In the direction of item \ref{a second item} let us take any stopping time $\tau \in (0,T]$ such that 
    $$\sup_{r \in [0 ,\tau)}\norm{u_r}_{W^{m,2}}^2 < \infty$$
    $\mathbbm{P}-a.s.$. Firstly we claim that this implies
$$\sup_{r \in [0 ,\tau]}\norm{u_r}_{W^{m,2}}^2 < \infty$$
$\mathbbm{P}-a.s.$ Indeed let us take any $\omega$ from the full probability set for which the supremum is finite and $u_{\cdot \wedge \tau^R}$ is continuous for all $R$. Now $$\sup_{r \in [0 ,\tau(\omega))}\norm{u_r(\omega)}_{W^{m,2}}^2 \leq M$$ for some $M$ (dependent on $\omega$). However as $u_{\cdot \wedge \tau^{M+1}(\omega)}(\omega)$ is continuous then $u_{\cdot}(\omega)$ cannot explode at $\tau(\omega)$, justifying the claim. For any $R > \norm{u_0}_{L^\infty\left(\Omega;W^{m,2}\right)}^2$ we have that $(u_{\cdot \wedge \tau \wedge \tau^R}, \tau \wedge \tau^R)$ is a local $W^{m,2}_{\sigma}-$strong solution. In addition for $\mathbbm{P}-a.e.$ $\omega$ there exists an $R$, dependent on $\omega$, such that $\tau^R(\omega) \geq \tau(\omega)$. Therefore, $u_{\cdot \wedge \tau}$ is given as the limit of $(u_{\cdot \wedge \tau \wedge \tau^R})$ $\mathbbm{P}-a.s.$ in $C\left([0,T];W^{m,2}_{\sigma} \right)$, hence retains the adaptedness and continuity of each $u_{\cdot \wedge \tau \wedge \tau^R}$, whilst also solving the identity (\ref{local identity}) as required.

    %We briefly acknowledge the measurability of $\mathscr{A}$, given that $\Theta$ is constructed as the $\mathbbm{P}-a.s.$ limit of stopping times and each $u_{\cdot \wedge \tau^k}$ is continuous and adapted in $W^{m,2}_{\sigma}$ which embeds into $W^{1,\infty}$.
\end{proof}

\begin{lemma} \label{key blowup}
    Let $(u,\Theta)$ be the unique maximal $W^{m,2}_{\sigma}-$strong solution specified in Proposition \ref{penultimate prop}. At $\mathbbm{P}-a.e.$ $\omega$ for which $\Theta(\omega)<T$, we have that \begin{equation} \nonumber \int_{0}^{\Theta(\omega)}\norm{u_s(\omega)}_{W^{1,\infty}}ds  = \infty.\end{equation}
\end{lemma}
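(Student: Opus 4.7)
The plan is to argue by contradiction. Decompose the exceptional event
\begin{equation*}
A = \left\{\omega : \Theta(\omega) < T,\ \int_0^{\Theta(\omega)} \norm{u_s}_{W^{1,\infty}} ds < \infty \right\} = \bigcup_{K \in \N} A_K,
\end{equation*}
with $A_K = \{\Theta < T,\ \int_0^{\Theta} \norm{u_s}_{W^{1,\infty}} ds < K \}$, so it suffices to show $\mathbbm{P}(A_K) = 0$ for every $K$. The idea is to derive a uniform-in-$R$ a priori estimate on $\norm{u_{\cdot \wedge \tau^R}}_{W^{m,2}}^2$ which depends on the $W^{1,\infty}$-integral only through $K$, and then to combine it with the $W^{m,2}$-blow-up of Proposition \ref{penultimate prop} item \ref{a first item} to force the contradiction on $A_K$.

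Fix $R > \norm{u_0}_{L^{\infty}(\Omega; W^{m,2})}^{2}$ and introduce
\begin{equation*}
\gamma_K^R := \tau^R \wedge \inf\left\{ t \geq 0 : \int_0^t \norm{u_s}_{W^{1,\infty}} ds \geq K \right\},
\end{equation*}
with $\tau^R$ as in (\ref{given taur}). Applying the It\^{o} formula to $v_t := \norm{u_t}_{W^{m,2}}^2$ on $[0, \gamma_K^R]$, and combining the nonlinear bound (\ref{nonlinear term control}) with the noise estimates of Proposition \ref{main prop}, I expect to reach
\begin{equation*}
dv_t \leq c(1 + \norm{u_t}_{W^{1,\infty}}) v_t\, dt + dM_t, \qquad d\langle M\rangle_t \leq c\, v_t^2\, dt,
\end{equation*}
where the constant $c$ absorbs the summable norms of $(\xi_i)$.

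To handle the unbounded $\norm{u}_{W^{1,\infty}}$ factor in the drift, I would introduce the integrating factor $E_t := \exp\bigl(-c\int_0^t (1 + \norm{u_s}_{W^{1,\infty}}) ds\bigr)$. A direct It\^{o} computation renders the drift of $E_t v_t$ nonpositive, giving $E_t v_t \leq v_0 + \tilde{N}_t$ for a local martingale $\tilde{N}$ with $d\langle \tilde{N}\rangle_t \leq c v_t^2\, dt$. On $[0, \gamma_K^R]$ one has $E_t \geq e^{-c(T+K)}$, yielding the pathwise bound $v_t \leq e^{c(T+K)}(v_0 + \tilde{N}_t)$. Taking suprema and expectations, applying Burkholder-Davis-Gundy to $\tilde{N}$, absorbing the resulting $(\mathbbm{E}\sup v)^{1/2}$ term via Young's inequality exactly as in (\ref{BDG type control}), and invoking the classical Gr\"{o}nwall lemma produces
\begin{equation*}
\mathbbm{E} \sup_{r \in [0, \gamma_K^R]} v_r \leq C_K \norm{u_0}_{L^{\infty}(\Omega; W^{m,2})}^{2},
\end{equation*}
with $C_K$ independent of $R$.

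To close the contradiction, observe that on $A_K$ the $W^{1,\infty}$-integral stays below $K$ up to $\Theta$ and $\tau^R \leq \Theta$ by maximality, so $\gamma_K^R = \tau^R$ on $A_K$. Proposition \ref{penultimate prop} item \ref{a first item} combined with continuity then forces $\sup_{r \leq \tau^R} v_r = R$ on $A_K$ (for any $R > \norm{u_0}_{L^{\infty}}^{2}$), and Markov's inequality yields $R\, \mathbbm{P}(A_K) \leq C_K \norm{u_0}_{L^{\infty}(\Omega; W^{m,2})}^{2}$; letting $R \to \infty$ forces $\mathbbm{P}(A_K) = 0$, completing the argument. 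The principal obstacle is the stochastic Gr\"{o}nwall step itself: the drift in the energy balance carries the unbounded coefficient $\norm{u}_{W^{1,\infty}}$, and the martingale part must be tamed uniformly in $R$. The integrating factor $E_t$, paired with the quartic quadratic-variation bound $\sum_i \inner{\mathcal{P}\mathcal{G}_i u}{u}_{W^{m,2}}^{2} \leq c \norm{u}_{W^{m,2}}^{4}$ from Proposition \ref{main prop}, is precisely what allows the BDG absorption and the subsequent Gr\"{o}nwall closure to go through.
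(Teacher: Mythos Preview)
Your overall strategy---derive a uniform-in-$R$ bound on $\mathbbm{E}\sup_{[0,\gamma_K^R]}\norm{u}_{W^{m,2}}^2$ and combine it with the $W^{m,2}$-blow-up of Proposition \ref{penultimate prop}---is correct and is precisely the architecture of the paper's proof. The final contradiction via Markov's inequality is a clean alternative to the paper's argument (which instead shows $\gamma^K<\Theta$ on $\{\Theta<T\}$ and invokes item \ref{a second item}), and your integrating-factor trick is a legitimate substitute for the Stochastic Gr\"{o}nwall Lemma \ref{gronny} that the paper uses.

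However, there is a genuine gap at the step you gloss over: you apply the It\^{o} formula to $\norm{u_t}_{W^{m,2}}^2$ on $[0,\gamma_K^R]$. This is \emph{not} justified. The maximal solution $u$ is only $C([0,T];W^{m,2}_\sigma)$, and the identity (\ref{local identity}) holds merely in $L^2_\sigma$; the drift contains $\mathcal{P}\mathcal{L}_u u\in W^{m-1,2}$ and $(\mathcal{P}\mathcal{G}_i)^2 u\in W^{m-2,2}$, so in the framework of Proposition \ref{Ito formula} you would need $u\in L^2([0,T];W^{m+2,2}_\sigma)$ to pair these against $u$ in a way that reproduces the $W^{m,2}$ inner product. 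The paper flags this explicitly: ``we cannot take energy estimates of $u$ directly in $W^{m,2}_\sigma$ as $u$ does not satisfy the identity in this space.''

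The paper's remedy is substantial and not a formality. One returns to the approximating solutions $(u^{n})$ with regularised initial data $\mathcal{P}_{n}u_0$ constructed in Proposition \ref{prop relaxing smooth}, which enjoy the extra $W^{m+2,2}_\sigma$ regularity needed for the It\^{o} formula in $W^{m,2}$. One must then introduce approximate stopping times $\gamma^{K+1}_n$ defined in terms of $\int_0^\cdot\norm{u^n_r}_{W^{1,\infty}}dr$ (not $u$), argue via the $C([0,T];W^{m,2})$ convergence that $\gamma^K\wedge\tau^R\leq\gamma^{K+1}_n$ for $n$ large enough $\mathbbm{P}$-a.s., prove a uniform bound $\mathbbm{E}\sup_{[0,\gamma^{K+1}_n\wedge\gamma^K\wedge\tau^R]}\norm{u^n}_{W^{m,2}}^2\leq C_K$ by applying the Stochastic Gr\"{o}nwall Lemma with $\eta_s=\norm{u^n_s}_{W^{1,\infty}}$, and finally pass to the limit. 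You correctly identified the random coefficient $\norm{u}_{W^{1,\infty}}$ as an obstacle, but the real one sits earlier: obtaining the energy inequality in $W^{m,2}$ at all.
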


\begin{proof}
    We define, for any given $M > 0$, the stopping time
    $$\gamma^M := \Theta \wedge \inf\left\{s \geq 0: \int_{0}^{s}\norm{u_r}_{W^{1,\infty}}ds  \geq M \right\}.$$
    The proof relies on showing that for any $R>\norm{u_0}_{L^\infty\left(\Omega;W^{m,2}\right)}^2$ and the stopping time $\tau^R$ defined in (\ref{given taur}), we have that 
 \begin{equation}  \label{the implier}
   \mathbbm{E}\left(\sup_{r \in [0 , \gamma^M \wedge \tau^R]}\norm{u_r}_{W^{m,2}}^2\right) \leq C_M 
 \end{equation}
where $C_M$ is independent of $R$. Let us first consider how (\ref{the implier}) will imply the result, and the prove (\ref{the implier}) later. We observe that $(\tau^{R})$ are $\mathbbm{P}-a.s.$ monotone increasing to $\Theta$; indeed $(\tau^R) \leq \Theta$ and are clearly $\mathbbm{P}-a.s.$ monotone increasing so admit a limit $\beta$. Thus $\beta \leq \Theta$, but if $\beta < \Theta$ on a set of positive probability then on that set, by the definition of the maximal time, there must be a stopping time $\kappa$ such that $\beta < \kappa \leq \Theta$ and $(u_{\cdot \wedge \kappa},\kappa)$ is a local $W^{m,2}_{\sigma}-$strong solution. However as $\beta < \kappa$ then $u_{\cdot \wedge \kappa}$ cannot be continuous in $W^{m,2}_{\sigma}$ which provides a contradiction hence the observation. Therefore we may apply the Monotone Convergence Theorem to (\ref{the implier}) to deduce that 
 \begin{equation}  \label{the implier 2}
   \mathbbm{E}\left(\sup_{r \in [0 , \gamma^M \wedge \Theta)}\norm{u_r}_{W^{m,2}}^2\right) \leq C_M. 
 \end{equation}
Of course $\gamma^M \wedge \Theta = \gamma^M$, so from (\ref{the implier 2}) we deduce that \begin{equation} \label{to contradict} \sup_{r \in [0 , \gamma^M)}\norm{u_r}_{W^{m,2}}^2 < \infty\end{equation}
$\mathbbm{P}-a.s.$. We now claim that \begin{equation} \label{the claim}\gamma^M  = T \wedge \inf\left\{s \geq 0: \int_{0}^{s}\norm{u_r}_{W^{1,\infty}}ds  \geq M \right\}.\end{equation}
On $\Omega$ for which $\Theta = T$ then this is trivial, and for $\Theta < T$ then by Proposition \ref{penultimate prop} item \ref{a first item} we must have that
$$ \sup_{r \in [0 , \Theta)}\norm{u_r}_{W^{m,2}}^2 = \infty.$$
If $\gamma^M = \Theta$ then this would contradict (\ref{to contradict}), so on $\Omega$ for which $\Theta < T$ we must have that
$$\gamma^M = \inf\left\{s \geq 0: \int_{0}^{s}\norm{u_r}_{W^{1,\infty}}ds  \geq M \right\} = T \wedge \inf\left\{s \geq 0: \int_{0}^{s}\norm{u_r}_{W^{1,\infty}}ds  \geq M \right\} $$
as $\gamma^M < \Theta < T$. Therefore, the claim (\ref{the claim}) is justified. Now (\ref{to contradict}) together with Proposition \ref{penultimate prop} item \ref{a second item} implies that $(u_{\cdot \wedge \gamma^M}, \gamma^M)$ is a local $W^{m,2}_{\sigma}-$strong solution. However due to the representation (\ref{the claim}), then we can prove the desired blow-up criterion exactly as we proved Proposition \ref{penultimate prop} item \ref{a first item}, with $\gamma^M$ replacing $\tau^R$ in (\ref{given taur}).\\

Therefore we can prove the Lemma if we verify (\ref{the implier}). Note that we cannot take energy estimates of $u$ directly in $W^{m,2}_{\sigma}$ as $u$ does not satisfy the identity in this space. Thus we must turn to the approximating sequence used to construct the local $W^{m,2}_{\sigma}-$strong solution up until $\tau^R$, which was the content of Proposition \ref{prop relaxing smooth}. For $(u^{n'}_{\cdot \wedge \tau^{M'}_{n'}},\tau^{M'}_{n'})$ the local $W^{m,2}_{\sigma}-$strong solutions of the equation (\ref{general itoo euler proper}) with initial condition $u^{n'}_0 = \mathcal{P}_{n'}u_0$, recalling also the details from Subsection \ref{subs smooth}, then $M'$ can be chosen sufficiently large such that for a subsequence relabelled from $n'$ to $n$, $\tau^R \leq \tau^{M'}_{n}$ and $$\lim_{n \rightarrow \infty}\sup_{r \in [0,\tau^R]}\norm{u_r - u^{n}_r}_{W^{m,2}}^2 = 0 $$
holds $\mathbbm{P}-a.s.$. Of course the convergence holds up until $\gamma^M \wedge \tau^R$, and we rewrite this as
\begin{equation}\label{abrewrite} \lim_{n \rightarrow \infty}\sup_{r \in [0,T]}\norm{u_{r \wedge \tau^R \wedge \gamma^M} - u^{n}_{r \wedge \tau^R \wedge \gamma^M}}_{W^{m,2}}^2 = 0\end{equation}
by the continuity of the processes. Unfortunately we will not be able to deduce a uniform control on the approximating sequence working only up until $\gamma^M$, so instead need to introduce the sequence of stopping times $(\gamma^{M+1}_n)$ defined by 
$$\gamma^{M+1}_n  = \tau^R \wedge \inf\left\{s \geq 0: \int_{0}^{s}\norm{u^n_r}_{W^{1,\infty}}ds  \geq M + 1 \right\}.$$
Note that use of $\tau^R$ in this definition is only to ensure that the stopping time is well-defined, as $u^n$ certainly exists up until $\tau^R$. In order to justify using these times, we first need to argue that 
\begin{equation} \label{an argument}\lim_{n \rightarrow \infty}\sup_{r \in [0,T]}\norm{u_{r \wedge \tau^R \wedge \gamma^M} - u^{n}_{r \wedge \tau^R \wedge \gamma^M \wedge \gamma^{M+1}_n}}_{W^{m,2}}^2 = 0\end{equation}
$\mathbbm{P}-a.s.$. Observe that
\begin{align*}
    &\norm{u_{r \wedge \tau^R \wedge \gamma^M} - u^{n}_{r \wedge \tau^R \wedge \gamma^M \wedge \gamma^{M+1}_n}}_{W^{m,2}}^2\\
    & \qquad \leq 2\norm{u_{r \wedge \tau^R \wedge \gamma^M} - u_{r \wedge \tau^R \wedge \gamma^M \wedge \gamma^{M+1}_n}}_{W^{m,2}}^2 + 2\norm{u_{r \wedge \tau^R \wedge \gamma^M \wedge \gamma^{M+1}_n} - u^{n}_{r \wedge \tau^R \wedge \gamma^M \wedge \gamma^{M+1}_n}}_{W^{m,2}}^2
\end{align*}
and we deal with the two terms individually. The second is straightforward as
\begin{align*}
   \lim_{n \rightarrow \infty}\sup_{r \in [0,T]} \norm{u_{r \wedge \tau^R \wedge \gamma^M \wedge \gamma^{M+1}_n} - u^{n}_{r \wedge \tau^R \wedge \gamma^M \wedge \gamma^{M+1}_n}}_{W^{m,2}}^2 \leq \lim_{n \rightarrow \infty}\sup_{r \in [0,T]}\norm{u_{r \wedge \tau^R \wedge \gamma^M} - u^{n}_{r \wedge \tau^R \wedge \gamma^M}}_{W^{m,2}}^2
\end{align*}
which is zero due to (\ref{abrewrite}). To deal with the first term let us consider $\omega$ in the full probability set such that the convergence of (\ref{abrewrite}) holds and the solutions $(u^n_{\cdot \wedge \tau^R})$, $u_{\cdot \wedge \tau^R}$ are continuous. By the embedding of $W^{m,2}_{\sigma} \xhookrightarrow{} W^{1,\infty}$ then the convergence also holds in $L^1\left([0,T];W^{1,\infty} \right)$ and we can choose an $N$ (dependent on $\omega$) such that for all $n \geq N$,
$$ \int_0^{T}\norm{u_{s \wedge \tau^R \wedge \gamma^M} - u^{n}_{s \wedge \tau^R \wedge \gamma^M}}_{W^{1,\infty}}ds < \frac{1}{2}$$
which implies that
\begin{equation} \label{la particular}\int_0^{\tau^R \wedge \gamma^M}\norm{u_{s} - u^{n}_{s}}_{W^{1,\infty}}ds < \frac{1}{2}.
\end{equation}
We now claim that for any $n \geq N$, $u_{r \wedge \tau^R \wedge \gamma^M} = u_{r \wedge \tau^R \wedge \gamma^M \wedge \gamma^{M+1}_n}$ which would imply the desired convergence. The only way in which this could not be true is if $\tau^R \wedge \gamma^M \wedge \gamma^{M+1}_n < \tau^R \wedge \gamma^M$, which can only occur if the hitting threshold in $\gamma^{M+1}_n$ is reached before $\tau^R$ and $\gamma^M$. In particular we would have to have that
$$\int_0^{\tau^R \wedge \gamma^M}\norm{u^{n}_{s}}_{W^{1,\infty}}ds \geq  M + 1$$ but given that $$\int_0^{\tau^R \wedge \gamma^M}\norm{u_{s}}_{W^{1,\infty}}ds \leq M$$
then this would violate (\ref{la particular}) which yields the claim and hence the convergence (\ref{an argument}). Moreover by the uniform control afforded by $\tau^R$, then we may apply the Dominated Convergence Theorem to deduce that 
\begin{equation} \label{an argument result}\lim_{n \rightarrow \infty}\mathbbm{E}\left(\sup_{r \in [0,T]}\norm{u_{r \wedge \tau^R \wedge \gamma^M} - u^{n}_{r \wedge \tau^R \wedge \gamma^M \wedge \gamma^{M+1}_n}}_{W^{m,2}}^2\right) = 0\end{equation}
Therefore to prove (\ref{the implier}) it is sufficient to show that
\begin{equation}  \label{the implier 3}
  \mathbbm{E}\left(\sup_{r \in [0,T]}\norm{u^{n}_{r \wedge \tau^R \wedge \gamma^M \wedge \gamma^{M+1}_n}}_{W^{m,2}}^2\right) =  \mathbbm{E}\left(\sup_{r \in [0 , \gamma^{M+1}_n \wedge \gamma^M \wedge \tau^R]}\norm{u^{n}_r}_{W^{m,2}}^2\right) \leq C_M. 
 \end{equation}
 The proof of this fact is very close to the proof for (\ref{uniform in viscosity bound}), dropping the viscous term and using that $\norm{u^n_0}_{W^{m,2}_{\sigma}}^2 \lesssim \norm{u_0}_{W^{m,2}_{\sigma}}^2.$ The key difference is the role of $\gamma^{M+1}_n$ compared to $\tau^M_n$. When we reach (\ref{granted}) we cannot simply apply a bound by $C_M$ to $\norm{\check{u}^n_s}_{W^{1,\infty}}$; instead we apply the Stochastic Gr\"{o}nwall Lemma \ref{gronny}, with $\eta_s = \norm{\check{u}^n_s}_{W^{1,\infty}}$. Note that we would have to arrive at (\ref{granted}) for given stopping times $\theta_j < \theta_k$ replacing $0,t$ respectively but this is inconsequential to the arguments, see the original paper [\cite{glatt2009strong}] for example. We content ourselves with this as a proof of (\ref{the implier 3}), and ultimately of the lemma.
\end{proof}

\subsection{Final Steps of Theorem \ref{main existence for euler}} \label{subsection final steps}

We now present the final details in the proof of Theorem \ref{main existence for euler}, for which the hard work is now done and we rely on established machinery. Firstly we must extend the existence and uniqueness of a maximal $W^{m,2}_{\sigma}-$strong solution from Proposition \ref{penultimate prop}, and the blow-up criterion of Lemma \ref{key blowup}, to the case of an unbounded initial condition. A complete argument is given in [\cite{goodair2022existence1}] Subsection 3.7 for an abstract SPDE, where one splices the initial condition into bounded parts as seen in [\cite{glatt2009strong}, \cite{glatt2012local}], and the blow-up characterisation of the maximal time is preserved. With this we justify the core statement of Theorem \ref{main existence for euler} and item \ref{main one}. For item \ref{main three} we use the infinite dimensional It\^{o}-Stratonovich conversion proved in [\cite{goodair2024stochastic}] Theorem 3.4 (Corollary 3.2), taking the spaces
$$V:= W^{3,2}_{\sigma}, \quad H:= W^{2,2}_{\sigma}, \quad U:= W^{1,2}_{\sigma}, \quad X:=L^2_{\sigma}.$$
We note that the required solution regularity of $C\left([0,T];W^{2,2}_{\sigma}\right) \cap L^2\left([0,T];W^{3,2}_{\sigma}\right)$ is implied by our regularity of $C\left([0,T];W^{3,2}_{\sigma}\right)$. With this we conclude the proof of Theorem \ref{main existence for euler}.

\appendix
\section{Appendix} \label{appendix}

We collect useful results from the literature that have been used throughout the paper.

\begin{proposition} \label{Ito formula}
Let $\mathcal{H}_1 \subset \mathcal{H}_2 \subset \mathcal{H}_3$ be a triplet of embedded Hilbert Spaces where $\mathcal{H}_1$ is dense in $\mathcal{H}_2$, with the property that there exists a continuous nondegenerate bilinear form $\inner{\cdot}{\cdot}_{\mathcal{H}_3 \times \mathcal{H}_1}: \mathcal{H}_3 \times \mathcal{H}_1 \rightarrow \R$ such that for $\phi \in \mathcal{H}_2$ and $\psi \in \mathcal{H}_1$, $$\inner{\phi}{\psi}_{\mathcal{H}_3 \times \mathcal{H}_1} = \inner{\phi}{\psi}_{\mathcal{H}_2}.$$ Suppose that for some $T > 0$ and stopping time $\tau$,
\begin{enumerate}
        \item $\sy_0:\Omega \rightarrow \mathcal{H}_2$ is $\mathcal{F}_0-$measurable;
        \item $f:\Omega \times [0,T] \rightarrow \mathcal{H}_3$ is such that for $\mathbbm{P}-a.e.$ $\omega$, $f(\omega) \in L^2([0,T];\mathcal{H}_3)$;
        \item $B:\Omega \times [0,T] \rightarrow \mathscr{L}^2(\mathfrak{U};\mathcal{H}_2)$ is progressively measurable and such that for $\mathbbm{P}-a.e.$ $\omega$, $B(\omega) \in L^2\left([0,T];\mathscr{L}^2(\mathfrak{U};\mathcal{H}_2)\right)$;
        \item  \label{4*} $\sy:\Omega \times [0,T] \rightarrow \mathcal{H}_1$ is such that for $\mathbbm{P}-a.e.$ $\omega$, $\sy_{\cdot}(\omega)\mathbbm{1}_{\cdot \leq \tau(\omega)} \in L^2([0,T];\mathcal{H}_1)$ and $\sy_{\cdot}\mathbbm{1}_{\cdot \leq \tau}$ is progressively measurable in $\mathcal{H}_1$;
        \item \label{item 5 again*} The identity
        \begin{equation} \label{newest identity*}
            \sy_t = \sy_0 + \int_0^{t \wedge \tau}f_sds + \int_0^{t \wedge \tau}B_s d\mathcal{W}_s
        \end{equation}
        holds $\mathbbm{P}-a.s.$ in $\mathcal{H}_3$ for all $t \in [0,T]$.
    \end{enumerate}
The the equality 
  \begin{align} \label{ito big dog*}\norm{\sy_t}^2_{\mathcal{H}_2} = \norm{\sy_0}^2_{\mathcal{H}_2} + \int_0^{t\wedge \tau} \bigg( 2\inner{f_s}{\sy_s}_{\mathcal{H}_3 \times \mathcal{H}_1} + \norm{B_s}^2_{\mathscr{L}^2(\mathfrak{U};\mathcal{H}_2)}\bigg)ds + 2\int_0^{t \wedge \tau}\inner{B_s}{\sy_s}_{\mathcal{H}_2}d\mathcal{W}_s\end{align}
  holds for any $t \in [0,T]$, $\mathbbm{P}-a.s.$ in $\R$. Moreover for $\mathbbm{P}-a.e.$ $\omega$, $\sy_{\cdot}(\omega) \in C([0,T];\mathcal{H}_2)$. 
\end{proposition}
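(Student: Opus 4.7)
The result is the classical infinite-dimensional It\^{o} formula in the style of Krylov-Rozovskii/Pardoux, interpreted here in a Gelfand-type triple where $\mathcal{H}_2$ plays the role of the pivot space and the bilinear form $\inner{\cdot}{\cdot}_{\mathcal{H}_3 \times \mathcal{H}_1}$ realises $\mathcal{H}_3$ as a duality with $\mathcal{H}_1$ through $\mathcal{H}_2$. The overall plan is to first absorb the stopping time, then reduce to a finite-dimensional identity via Galerkin approximation where the classical finite-dimensional It\^{o} formula applies, and finally pass to the limit.

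First I would remove the stopping time: replacing $f_s$ with $f_s\mathbbm{1}_{s \leq \tau}$ and $B_s$ with $B_s\mathbbm{1}_{s \leq \tau}$, the stopped process $\sy_{\cdot \wedge \tau}$ satisfies an analogous identity without any stopping time, and recovering (\ref{ito big dog*}) at a general $t$ is equivalent to establishing the unstopped formula. Next I would pick a nested sequence of finite rank orthogonal projections $(\proj_n)$ onto finite-dimensional subspaces of $\mathcal{H}_1$, chosen so that $\proj_n \to I$ strongly on each of $\mathcal{H}_1, \mathcal{H}_2, \mathcal{H}_3$ with uniformly bounded operator norms (in the applications of this paper such a basis is provided by eigenfunctions of the Stokes operator, as recalled in Subsection \ref{subs functional anal}). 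Applying $\proj_n$ to the identity (\ref{newest identity*}) yields a finite-dimensional It\^{o} process for $\proj_n \sy_t$, and the classical It\^{o} formula in $\proj_n \mathcal{H}_1$ produces
\begin{align*}
    \norm{\proj_n \sy_t}^2_{\mathcal{H}_2} = \norm{\proj_n \sy_0}^2_{\mathcal{H}_2} &+ \int_0^t \left(2\inner{\proj_n f_s}{\proj_n \sy_s}_{\mathcal{H}_2} + \norm{\proj_n B_s}^2_{\mathscr{L}^2(\mathfrak{U};\mathcal{H}_2)}\right)ds\\
    &+ 2\int_0^t \inner{\proj_n B_s}{\proj_n \sy_s}_{\mathcal{H}_2} d\mathcal{W}_s.
\end{align*}

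Passing to the limit $n \to \infty$ is the main obstacle, and the decisive difficulty is the drift duality term. The terms $\norm{\proj_n \sy_t}^2_{\mathcal{H}_2}$, $\norm{\proj_n \sy_0}^2_{\mathcal{H}_2}$ and $\norm{\proj_n B_s}^2_{\mathscr{L}^2(\mathfrak{U};\mathcal{H}_2)}$ converge pointwise by strong convergence of $\proj_n$ on $\mathcal{H}_2$, and the time integrals converge by dominated convergence using the uniform operator bounds together with the $L^2$ integrability assumptions on $B$. The stochastic integral converges in probability by a BDG/It\^{o}-isometry estimate applied to the differences. For the deterministic duality term, I would exploit that whenever both arguments sit in $\mathcal{H}_2$ one has $\inner{\proj_n f_s}{\proj_n \sy_s}_{\mathcal{H}_2} = \inner{\proj_n f_s}{\proj_n \sy_s}_{\mathcal{H}_3 \times \mathcal{H}_1}$; continuity of the bilinear form together with $\proj_n f_s \to f_s$ in $\mathcal{H}_3$ and $\proj_n \sy_s \to \sy_s$ in $\mathcal{H}_1$ then yields pointwise convergence to $\inner{f_s}{\sy_s}_{\mathcal{H}_3 \times \mathcal{H}_1}$, while the bound $\abs{\inner{\proj_n f_s}{\proj_n \sy_s}_{\mathcal{H}_3 \times \mathcal{H}_1}} \lesssim \norm{f_s}_{\mathcal{H}_3}\norm{\sy_s}_{\mathcal{H}_1}$ furnished by uniform operator bounds provides a dominating function in $L^1([0,T])$ $\mathbbm{P}$-a.s.\ via Cauchy-Schwarz and assumption (\ref{4*}).

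The continuity assertion $\sy_{\cdot}(\omega) \in C([0,T];\mathcal{H}_2)$ then follows from the established formula (\ref{ito big dog*}): its right-hand side is $\mathbbm{P}$-a.s.\ continuous in $t$, so $t \mapsto \norm{\sy_t}^2_{\mathcal{H}_2}$ is continuous, and combined with the weak continuity in $\mathcal{H}_2$ inherited from the identity in $\mathcal{H}_3$ (and boundedness of $\sy$ in $\mathcal{H}_1$ up to $\tau$), strong continuity in $\mathcal{H}_2$ is obtained via the standard Hilbert space fact that weak convergence with norm convergence implies strong convergence.
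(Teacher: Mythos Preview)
The paper does not prove this proposition at all: it is stated in the appendix as a known result and the proof is a one-line citation to Proposition 4.3 of \cite{goodair2024stochastic} and Lemma 4.2.5 of \cite{prevot2007concise}. So there is no paper proof to compare against in substance.

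Your Galerkin sketch is a reasonable route and is essentially how one would argue in the concrete setting of this paper, where the Stokes eigenfunctions furnish projections $\mathcal{P}_n$ that are simultaneously orthogonal on every $W^{k,2}_\sigma$. However, as a proof of the \emph{abstract} statement it has a genuine gap: the proposition assumes nothing beyond the Gelfand-type triple structure, and in that generality there is no reason a single sequence of finite-rank projections should be uniformly bounded on all of $\mathcal{H}_1,\mathcal{H}_2,\mathcal{H}_3$ and converge strongly on each, nor that $\mathcal{P}_n$ defined on $\mathcal{H}_1$ extends sensibly to act on $f_s\in\mathcal{H}_3$. You acknowledge this by pointing to the Stokes basis, but that reduces your argument to the special case rather than the stated result. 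The standard proof in Pr\'ev\^ot--R\"ockner that the paper cites avoids this issue entirely: it proceeds by time discretisation and a telescoping identity for $\norm{\sy_{t_{k+1}}}_{\mathcal{H}_2}^2-\norm{\sy_{t_k}}_{\mathcal{H}_2}^2$, using density of $\mathcal{H}_1$ in $\mathcal{H}_2$ and lower semicontinuity of the norm rather than spectral projections, and so works in the full abstract generality. If you want your sketch to stand on its own you should either add the existence of such projections as a hypothesis, or switch to the time-discretisation argument.
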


\begin{proof}
    See [\cite{goodair2024stochastic}] Proposition 4.3, a slight extension of [\cite{prevot2007concise}] Lemma 4.2.5.
\end{proof}

\begin{proposition} \label{amazing cauchy lemma}
    Fix $T>0$. For $t\in[0,T]$ let $X_t$ denote a Banach Space with norm $\norm{\cdot}_{X,t}$ such that for all $s > t$, $X_s \xhookrightarrow{}X_t$ and $\norm{\cdot}_{X,t} \leq \norm{\cdot}_{X,s}$. Suppose that $(\sy^n)$ is a sequence of processes $\sy^n:\Omega \mapsto X_T$, $\norm{\sy^n}_{X,\cdot}$ is adapted and $\mathbbm{P}-a.s.$ continuous, $\sy^n \in L^2\left(\Omega;X_T\right)$, and such that $\sup_{n}\norm{\sy^n}_{X,0} \in L^\infty\left(\Omega;\R\right)$. For any given $M >1$ define the stopping times
    \begin{equation} \label{another taumt}
        \tau^{M,T}_n := T \wedge \inf\left\{s \geq 0: \norm{\sy^n}_{X,s}^2 \geq M + \norm{\sy^n}_{X,0}^2 \right\}.
    \end{equation}
Furthermore suppose \begin{equation} \label{supposition 1}
    \lim_{m \rightarrow \infty}\sup_{n \geq m}\mathbbm{E}\left[\norm{\sy^n-\sy^m}^2_{X,\tau
_{m}^{M,t}\wedge \tau_{n}^{M,t}} \right] =0
\end{equation}
and that for any stopping time $\gamma$ and sequence of stopping times $(\delta_j)$ which converge to $0$ $\mathbbm{P}-a.s.$, \begin{equation} \label{supposition 2} \lim_{j \rightarrow \infty}\sup_{n\in\N}\mathbbm{E}\left(\norm{\sy^n}_{X,(\gamma + \delta_j) \wedge \tau^{M,T}_n}^2 - \norm{\sy^n}_{X,\gamma \wedge \tau^{M,T}_n}^2 \right) =0.
\end{equation}
Then there exists a stopping time $\tau^{M,T}_{\infty}$, a process $\sy:\Omega \mapsto X_{\tau^{M,T}_{\infty}}$ whereby $\norm{\sy}_{X,\cdot \wedge \tau^{M,T}_{\infty}}$ is adapted and $\mathbbm{P}-a.s.$ continuous, and a subsequence indexed by $(m_j)$ such that 
\begin{itemize}
    \item $\tau^{M,T}_{\infty} \leq \tau^{M,T}_{m_j}$ $\mathbbm{P}-a.s.$,
    \item $\lim_{j \rightarrow \infty}\norm{\sy - \sy^{m_j}}_{X,\tau^{M,T}_{\infty}} = 0$ $\mathbbm{P}-a.s.$.
\end{itemize}
Moreover for any $R>0$ we can choose $M$ to be such that the stopping time \begin{equation} \nonumber
        \tau^{R,T} := T \wedge \inf\left\{s \geq 0: \norm{\sy}_{X,s\wedge\tau^{M,T}_{\infty}}^2 \geq R \right\}
    \end{equation}
satisfies $\tau^{R,T} \leq \tau^{M,T}_{\infty}$ $\mathbbm{P}-a.s.$. Thus $\tau^{R,T}$ is simply $T \wedge \inf\left\{s \geq 0: \norm{\sy}_{X,s}^2 \geq R \right\}$.

\end{proposition}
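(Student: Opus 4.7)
The plan is a pathwise Cauchy argument combined with a careful choice of limiting stopping time. From (\ref{supposition 1}), I would extract a subsequence, relabelled $(m_j)$, along which
$$
\mathbbm{E}\bigl[\norm{\sy^{m_{j+1}}-\sy^{m_j}}_{X,\tau^{M,T}_{m_j}\wedge\tau^{M,T}_{m_{j+1}}}^{2}\bigr] \leq 2^{-3j}.
$$
Markov's inequality gives $\mathbbm{P}(\norm{\sy^{m_{j+1}}-\sy^{m_j}}_{X,\tau^{M,T}_{m_j}\wedge\tau^{M,T}_{m_{j+1}}}>2^{-j}) \leq 2^{-j}$, and Borel--Cantelli yields a full-probability set $\Omega_0$ on which the series $\sum_j \norm{\sy^{m_{j+1}}-\sy^{m_j}}_{X,\tau^{M,T}_{m_j}\wedge\tau^{M,T}_{m_{j+1}}}$ converges. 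The monotonicity of $\norm{\cdot}_{X,t}$ in $t$ transfers this Cauchy property to any fixed time at most $\liminf_j \tau^{M,T}_{m_j}$, defining the candidate limit $\sy$ pathwise on $\Omega_0$ (extended by zero off $\Omega_0$).

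Next, to produce a stopping time $\tau^{M,T}_\infty$ satisfying both $\tau^{M,T}_\infty \leq \tau^{M,T}_{m_j}$ $\mathbbm{P}-a.s.$ and the regularity required of $\sy$, I would set
$$
\tau^{M,T}_\infty := T \wedge \inf\bigl\{s \geq 0 : \norm{\sy}_{X,s}^{2} \geq (M/2) + \sup_n \norm{\sy^n}_{X,0}^{2} \bigr\},
$$
a first-hitting time at a strictly sub-critical level. Since $\norm{\sy^{m_j}}_{X,\tau^{M,T}_{m_j}}^{2} \leq M + \norm{\sy^{m_j}}_{X,0}^{2}$ by the very definition of $\tau^{M,T}_{m_j}$, pathwise convergence on the common interval gives $\norm{\sy}_{X,\tau^{M,T}_\infty\wedge\tau^{M,T}_{m_j}}^{2} \leq M + \sup_n\norm{\sy^n}_{X,0}^{2}$; together with the strict gap between the two thresholds this forces $\tau^{M,T}_\infty \leq \tau^{M,T}_{m_j}$ $\mathbbm{P}-a.s.$ Adaptedness and $\mathbbm{P}-a.s.$ continuity of $\norm{\sy}_{X,\cdot\wedge\tau^{M,T}_\infty}$ are then inherited from those of the approximants by uniform convergence on $[0,\tau^{M,T}_\infty]$.

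Assumption (\ref{supposition 2}) enters to prevent the stopping times from degenerating and to legitimately pass continuity of norms to the limit: applied at $\theta = 0$ with $\delta_l\downarrow 0$ it yields $\sup_n \mathbbm{E}(\norm{\sy^n}_{X,\delta_l\wedge\tau^{M,T}_n}^{2} - \norm{\sy^n}_{X,0}^{2})\to 0$, which combined with $M>1$ shows that $\tau^{M,T}_{m_j}$ cannot collapse to $0$ uniformly, hence $\mathbbm{P}(\tau^{M,T}_\infty > 0) = 1$; applied at general stopping times it provides the equicontinuity needed to verify continuity of the limiting norm. Finally, for any $R>0$ I fix $M$ with $M/2 > R$. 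Then on $\{\tau^{R,T} > \tau^{M,T}_\infty\}$ the norm $\norm{\sy}_{X,\cdot}$ would have reached $M/2 + \sup_n \norm{\sy^n}_{X,0}^{2} > R$ before reaching $R$, a contradiction; hence $\tau^{R,T} \leq \tau^{M,T}_\infty$ $\mathbbm{P}-a.s.$, and the alternative description of $\tau^{R,T}$ follows.

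The main obstacle is the interlocking in the second step: the limit $\sy$ is a priori defined only on $[0, \liminf_j \tau^{M,T}_{m_j}]$, while $\tau^{M,T}_\infty$ is defined by a hitting condition on $\sy$ itself, so one must verify in a single pass that this definition is internally consistent and that the resulting $\tau^{M,T}_\infty$ is a genuine stopping time dominated by each $\tau^{M,T}_{m_j}$. The strict gap between the thresholds $M/2 + \sup_n\norm{\sy^n}_{X,0}^{2}$ and $M + \norm{\sy^{m_j}}_{X,0}^{2}$ is precisely what closes the loop, giving enough room for the limit to be controlled below the limiting threshold before the larger threshold for the approximants can be reached.
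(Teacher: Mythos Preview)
The paper does not prove this proposition at all: its entire proof is a citation to Proposition~6.1 of \cite{goodair2024weak}. So there is no in-paper argument to compare to, and the question is simply whether your sketch is internally sound.

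Your overall architecture (fast Borel--Cantelli subsequence, pathwise limit, hitting time at a strictly sub-critical level) is the right picture, but the step you yourself flag as the ``main obstacle'' is not actually closed. You define $\tau^{M,T}_\infty$ as a hitting time of $\norm{\sy}_{X,\cdot}$, yet $\sy$ is only constructed on $[0,\liminf_j \tau^{M,T}_{m_j}]$, and $\liminf_j \tau^{M,T}_{m_j}$ is neither a stopping time nor guaranteed to dominate every $\tau^{M,T}_{m_j}$. Your argument that the ``strict gap between the two thresholds forces $\tau^{M,T}_\infty \leq \tau^{M,T}_{m_j}$'' tacitly compares $\sy$ and $\sy^{m_j}$ at the time $\tau^{M,T}_{m_j}$, but pathwise convergence is only available at times below $\liminf_k \tau^{M,T}_{m_k}$, and for a fixed $j$ there is no reason $\tau^{M,T}_{m_j}$ lies below that $\liminf$. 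So the loop does not close with the gap argument alone.

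What is missing is the substantive use of assumption~(\ref{supposition 2}). In the argument of the cited reference (and in the Glatt-Holtz--Ziane framework it descends from), the uniform equicontinuity (\ref{supposition 2}) is combined with the Cauchy condition (\ref{supposition 1}) to prove that the stopping times $(\tau^{M,T}_n)$ are themselves Cauchy in probability. One then extracts a subsequence along which $\tau^{M,T}_{m_j}$ converges $\mathbbm{P}$--a.s., and only \emph{after} this does one define the limiting stopping time (at a strictly lower threshold) and obtain the inequality $\tau^{M,T}_\infty \leq \tau^{M,T}_{m_j}$. You invoke (\ref{supposition 2}) only for non-degeneracy at $\gamma=0$ and a vague ``equicontinuity needed to verify continuity of the limiting norm''; that is not where the real work lies. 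The convergence-in-probability of the stopping times is the technical heart of the result and cannot be bypassed by the sublevel trick alone.
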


\begin{proof}
    See [\cite{goodair2024weak}] Proposition 6.1.
\end{proof}

\begin{remark}
    A consequence of the properties that $\sup_{n}\norm{\sy^n}_{X,0} \in L^\infty\left(\Omega;\R\right)$ and\\ $\lim_{j \rightarrow \infty}\norm{\sy - \sy^{m_j}}_{X,\tau^{M,T}_{\infty}} = 0$ $\mathbbm{P}-a.s.$ is that $\norm{\sy}_{X,0} \in  L^\infty\left(\Omega;\R\right)$. Therefore for\\ $R > \norm{\norm{\sy}_{X,0}}_{L^\infty\left(\Omega;\R\right)}^2$ we have that $\tau^{R,T}$ is $\mathbbm{P}-a.s.$ positive, hence so too is $\tau^{M,T}_{\infty}$ for appropriately chosen $M$.
\end{remark}

\begin{lemma} \label{gronny}
Fix $t>0$ and suppose that $\boldsymbol{\phi},\boldsymbol{\psi}, \boldsymbol{\eta}$ are real-valued, non-negative stochastic processes. Assume, moreover, that there exists constants $c',\hat{c}$ (allowed to depend on $t$) such that for $\mathbbm{P}-a.e.$ $\omega$, \begin{equation} \label{boundingronny} \int_0^t\boldsymbol{\eta}_s(\omega) ds \leq c'\end{equation} and for all stopping times $0 \leq \theta_j < \theta_k \leq t$,
$$\mathbbm{E}\left(\sup_{r \in [\theta_j,\theta_k]}\boldsymbol{\phi}_r\right) + \mathbbm{E}\left(\int_{\theta_j}^{\theta_k}\boldsymbol{\psi}_sds \right)\leq \hat{c}\mathbbm{E}\left(\left(\boldsymbol{\phi}_{\theta_j} + 1 \right) + \int_{\theta_j}^{\theta_k} \boldsymbol{\eta}_s\boldsymbol{\phi}_sds\right) < \infty. $$Then there exists a constant $C$ dependent only on $c',\hat{c},t$ such that $$\mathbbm{E}\left(\sup_{r \in [0,t]}\boldsymbol{\phi}_r\right) + \mathbbm{E}\left(\int_{0}^{t}\boldsymbol{\psi}_sds\right) \leq C\left[\mathbbm{E}(\boldsymbol{\phi}_{0}) + 1\right].$$
\end{lemma}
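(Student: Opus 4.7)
The plan is to prove a stochastic Grönwall via a partition of $[0,t]$ into stopping-time intervals on each of which the integral of $\boldsymbol{\eta}$ is small, so that the troublesome term $\int \boldsymbol{\eta}_s\boldsymbol{\phi}_s\, ds$ can be absorbed into the supremum on the left-hand side. The $\mathbbm{P}$-almost-sure bound $\int_0^t\boldsymbol{\eta}_s\, ds\leq c'$ makes this partition deterministic in length, which is the crucial feature.

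Fix $\varepsilon>0$ to be chosen with $\hat c\varepsilon\leq 1/2$ (for instance $\varepsilon=1/(2\hat c)$). Define recursively $\sigma_0=0$ and
\begin{equation*}
\sigma_{k+1}=t\wedge\inf\left\{s\geq\sigma_k:\int_{\sigma_k}^{s}\boldsymbol{\eta}_r\,dr\geq\varepsilon\right\}.
\end{equation*}
Each $\sigma_k$ is a stopping time, and because $\int_0^t\boldsymbol{\eta}_s\, ds\leq c'$ $\mathbbm{P}$-a.s., at most $N:=\lceil c'/\varepsilon\rceil+1$ of the intervals $[\sigma_k,\sigma_{k+1}]$ can be nondegenerate; in particular $\sigma_N=t$ $\mathbbm{P}$-a.s.\ for this deterministic $N$. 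By construction $\int_{\sigma_k}^{\sigma_{k+1}}\boldsymbol{\eta}_s\, ds\leq\varepsilon$ pathwise.

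Apply the hypothesis with $\theta_j=\sigma_k$, $\theta_k=\sigma_{k+1}$. Using the trivial bound
\begin{equation*}
\int_{\sigma_k}^{\sigma_{k+1}}\boldsymbol{\eta}_s\boldsymbol{\phi}_s\, ds\leq\left(\sup_{r\in[\sigma_k,\sigma_{k+1}]}\boldsymbol{\phi}_r\right)\int_{\sigma_k}^{\sigma_{k+1}}\boldsymbol{\eta}_s\, ds\leq\varepsilon\sup_{r\in[\sigma_k,\sigma_{k+1}]}\boldsymbol{\phi}_r
\end{equation*}
and subtracting $\hat c\varepsilon\,\mathbbm{E}(\sup_{[\sigma_k,\sigma_{k+1}]}\boldsymbol{\phi}_r)\leq\tfrac12\mathbbm{E}(\sup_{[\sigma_k,\sigma_{k+1}]}\boldsymbol{\phi}_r)$ from both sides (finiteness of the expectation is guaranteed by the hypothesis), we obtain the one-step estimate
\begin{equation*}
\mathbbm{E}\left(\sup_{r\in[\sigma_k,\sigma_{k+1}]}\boldsymbol{\phi}_r\right)+\mathbbm{E}\int_{\sigma_k}^{\sigma_{k+1}}\boldsymbol{\psi}_s\, ds\leq 2\hat c\,\big(\mathbbm{E}\boldsymbol{\phi}_{\sigma_k}+1\big).
\end{equation*}
Since $\boldsymbol{\phi}_{\sigma_{k+1}}\leq\sup_{r\in[\sigma_k,\sigma_{k+1}]}\boldsymbol{\phi}_r$, in particular $\mathbbm{E}\boldsymbol{\phi}_{\sigma_{k+1}}\leq 2\hat c\,(\mathbbm{E}\boldsymbol{\phi}_{\sigma_k}+1)$, and iterating this recursion $N$ times yields $\mathbbm{E}\boldsymbol{\phi}_{\sigma_k}\leq C_1(\mathbbm{E}\boldsymbol{\phi}_0+1)$ for each $k\leq N$, with $C_1$ depending only on $N$ and $\hat c$.

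Finally, summing the one-step estimate over $k=0,\ldots,N-1$ and using $\sigma_N=t$ along with $\sup_{[0,t]}\boldsymbol{\phi}_r\leq\sum_{k=0}^{N-1}\sup_{[\sigma_k,\sigma_{k+1}]}\boldsymbol{\phi}_r$ produces the desired bound with a constant $C$ depending only on $c',\hat c,t$. The only real subtlety is ensuring that the expectations are finite before absorbing (which is given in the hypothesis as the strict inequality $<\infty$), and that the deterministic upper bound on $N$ coming from $\int_0^t\boldsymbol{\eta}_s\, ds\leq c'$ prevents any blow-up in the iterated constant — this is the main reason the pathwise boundedness of $\int\boldsymbol{\eta}$ is the indispensable hypothesis of the lemma.
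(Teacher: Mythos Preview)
Your proof is correct and follows precisely the standard stopping-time partition argument used in the cited reference [\cite{glatt2009strong}] Lemma 5.3, which is all the paper invokes here; no alternative proof is given in the paper itself.
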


\begin{proof}
    See [\cite{glatt2009strong}] Lemma 5.3.
\end{proof}

\textbf{Acknowledgements}\\

The author is indebted to Szymon Sobczak for his significant contributions to the paper, particularly in the proof of Proposition 3.2.

%\addcontentsline{toc}{section}{References}
\bibliographystyle{newthing}
\bibliography{myBibby}

\begin{thebibliography}{10}
\providecommand{\url}[1]{{#1}}
\providecommand{\urlprefix}{URL }
\expandafter\ifx\csname urlstyle\endcsname\relax
  \providecommand{\doi}[1]{DOI~\discretionary{}{}{}#1}\else
  \providecommand{\doi}{DOI~\discretionary{}{}{}\begingroup \urlstyle{rm}\Url}\fi

\bibitem{agresti2024delayed}
Agresti, A.: Delayed blow-up and enhanced diffusion by transport noise for systems of reaction--diffusion equations.
\newblock Stochastics and Partial Differential Equations: Analysis and Computations \textbf{12}(3), 1907--1981 (2024)

\bibitem{agresti2024global}
Agresti, A.: Global smooth solutions by transport noise of 3D Navier-Stokes equations with small hyperviscosity.
\newblock arXiv preprint arXiv:2406.09267  (2024)

\bibitem{agresti2024critical}
Agresti, A., Veraar, M.: The critical variational setting for stochastic evolution equations.
\newblock Probability Theory and Related Fields \textbf{188}(3), 957--1015 (2024)

\bibitem{agresti2024stochastic}
Agresti, A., Veraar, M.: Stochastic Navier--Stokes equations for turbulent flows in critical spaces.
\newblock Communications in Mathematical Physics \textbf{405}(2), 43 (2024)

\bibitem{alonso2021local}
Alonso-Or{\'a}n, D., Rohde, C., Tang, H.: A local-in-time theory for singular SDEs with applications to fluid models with transport noise.
\newblock Journal of Nonlinear Science \textbf{31}(6), 98 (2021)

\bibitem{beale1984remarks}
Beale, J.T., Kato, T., Majda, A.: Remarks on the breakdown of smooth solutions for the {$3$}-{D} {E}uler equations.
\newblock Comm. Math. Phys. \textbf{94}(1), 61--66 (1984).
\newblock \urlprefix\url{http://projecteuclid.org.univaq.clas.cineca.it/euclid.cmp/1103941230}

\bibitem{brzezniak1992stochastic}
Brze{\'z}niak, Z., Capi{\'n}ski, M., Flandoli, F.: Stochastic Navier-Stokes equations with multiplicative noise.
\newblock Stochastic Analysis and Applications \textbf{10}(5), 523--532 (1992)

\bibitem{brzezniak2021well}
Brze{\'z}niak, Z., Slavik, J.: Well-posedness of the 3D stochastic primitive equations with multiplicative and transport noise.
\newblock Journal of Differential Equations \textbf{296}, 617--676 (2021)

\bibitem{chapron2024stochastic}
Chapron, B., Crisan, D., Holm, D., M{\'e}min, E., Radomska, A.: Stochastic Transport in Upper Ocean Dynamics II: STUOD 2022 Workshop, London, UK, September 26--29.
\newblock Springer Nature (2024)

\bibitem{chapron2018large}
Chapron, B., D{\'e}rian, P., M{\'e}min, E., Resseguier, V.: Large-scale flows under location uncertainty: a consistent stochastic framework.
\newblock Quarterly Journal of the Royal Meteorological Society \textbf{144}(710), 251--260 (2018)

\bibitem{coghi2023existence}
Coghi, M., Maurelli, M.: Existence and uniqueness by Kraichnan noise for 2D Euler equations with unbounded vorticity.
\newblock arXiv preprint arXiv:2308.03216  (2023)

\bibitem{cotter2020data}
Cotter, C., Crisan, D., Holm, D., Pan, W., Shevchenko, I.: Data assimilation for a quasi-geostrophic model with circulation-preserving stochastic transport noise.
\newblock Journal of Statistical Physics \textbf{179}(5), 1186--1221 (2020)

\bibitem{cotter2019numerically}
Cotter, C., Crisan, D., Holm, D.D., Pan, W., Shevchenko, I.: Numerically modeling stochastic Lie transport in fluid dynamics.
\newblock Multiscale Modeling \& Simulation \textbf{17}(1), 192--232 (2019)

\bibitem{crisan2019solution}
Crisan, D., Flandoli, F., Holm, D.D.: Solution properties of a 3D stochastic Euler fluid equation.
\newblock Journal of Nonlinear Science \textbf{29}(3), 813--870 (2019)

\bibitem{crisan2023implementation}
Crisan, D., Holm, D., Korn, P.: An implementation of Hasselmann’s paradigm for stochastic climate modelling based on stochastic Lie transport.
\newblock Nonlinearity \textbf{36}(9), 4862 (2023)

\bibitem{crisan2023theoretical}
Crisan, D., Holm, D.D., Lang, O., Mensah, P.R., Pan, W.: Theoretical analysis and numerical approximation for the stochastic thermal quasi-geostrophic model.
\newblock Stochastics and Dynamics \textbf{23}(05), 2350,039 (2023)

\bibitem{crisan2022solution}
Crisan, D., Holm, D.D., Leahy, J.M., Nilssen, T.: Solution properties of the incompressible Euler system with rough path advection.
\newblock Journal of Functional Analysis \textbf{283}(9), 109,632 (2022)

\bibitem{crisan2022variational}
Crisan, D., Holm, D.D., Leahy, J.M., Nilssen, T.: Variational principles for fluid dynamics on rough paths.
\newblock Advances in Mathematics \textbf{404}, 108,409 (2022)

\bibitem{crisan2022spatial}
Crisan, D., Mensah, P.R.: Spatial analyticity and exponential decay of Fourier modes for the stochastic Navier-Stokes equation.
\newblock arXiv preprint arXiv:2209.14862v1  (2022)

\bibitem{da2014stochastic}
Da~Prato, G., Zabczyk, J.: Stochastic equations in infinite dimensions, vol. 152.
\newblock Cambridge university press (2014)

\bibitem{debussche2024variational}
Debussche, A., M{\'e}min, E.: Variational principles for fully coupled stochastic fluid dynamics across scales.
\newblock arXiv preprint arXiv:2409.12654  (2024)

\bibitem{debussche2024second}
Debussche, A., Pappalettera, U.: Second order perturbation theory of two-scale systems in fluid dynamics.
\newblock Journal of the European Mathematical Society  (2024)

\bibitem{dufee2022stochastic}
Duf{\'e}e, B., M{\'e}min, E., Crisan, D.: Stochastic parametrization: an alternative to inflation in Ensemble Kalman filters.
\newblock Quarterly Journal of the Royal Meteorological Society \textbf{148}(744), 1075--1091 (2022)

\bibitem{ephrati2023data}
Ephrati, S.R., Cifani, P., Luesink, E., Geurts, B.J.: Data-Driven Stochastic Lie Transport Modeling of the 2D Euler Equations.
\newblock Journal of Advances in Modeling Earth Systems \textbf{15}(1), e2022MS003,268 (2023)

\bibitem{flandoli2021delayed}
Flandoli, F., Galeati, L., Luo, D.: Delayed blow-up by transport noise.
\newblock Communications in Partial Differential Equations \textbf{46}(9), 1757--1788 (2021)

\bibitem{flandoli2010well}
Flandoli, F., Gubinelli, M., Priola, E.: Well-posedness of the transport equation by stochastic perturbation.
\newblock Inventiones mathematicae \textbf{180}(1), 1--53 (2010)

\bibitem{flandoli2021high}
Flandoli, F., Luo, D.: High mode transport noise improves vorticity blow-up control in 3D Navier--Stokes equations.
\newblock Probability Theory and Related Fields \textbf{180}(1), 309--363 (2021)

\bibitem{flandoli2023stochastic}
Flandoli, F., Luongo, E., et~al.: Stochastic partial differential equations in fluid mechanics, vol. 2330.
\newblock Springer (2023)

\bibitem{flandoli20212d}
Flandoli, F., Pappalettera, U.: 2D Euler equations with Stratonovich transport noise as a large-scale stochastic model reduction.
\newblock Journal of Nonlinear Science \textbf{31}(1), 1--38 (2021)

\bibitem{flandoli2022additive}
Flandoli, F., Pappalettera, U.: From additive to transport noise in 2D fluid dynamics.
\newblock Stochastics and Partial Differential Equations: Analysis and Computations pp. 1--41 (2022)

\bibitem{galeati2023weak}
Galeati, L., Luo, D.: Weak well-posedness by transport noise for a class of 2D fluid dynamics equations.
\newblock arXiv preprint arXiv:2305.08761  (2023)

\bibitem{glatt2011cauchy}
Glatt-Holtz, N., Temam, R.: Cauchy convergence schemes for some nonlinear partial differential equations.
\newblock Applicable Analysis \textbf{90}(1), 85--102 (2011)

\bibitem{glatt2009strong}
Glatt-Holtz, N., Ziane, M., et~al.: Strong pathwise solutions of the stochastic Navier-Stokes system.
\newblock Advances in Differential Equations \textbf{14}(5/6), 567--600 (2009)

\bibitem{glatt2012local}
Glatt-Holtz, N.E., Vicol, V.C.: Local and global existence of smooth solutions for the stochastic {E}uler equations with multiplicative noise.
\newblock Ann. Probab. \textbf{42}(1), 80--145 (2014).
\newblock \doi{10.1214/12-AOP773}.
\newblock \urlprefix\url{https://doi-org.univaq.clas.cineca.it/10.1214/12-AOP773}

\bibitem{goodair2024thesis}
Goodair, D.: Navier-Stokes equations with stochastic Lie transport: well-posedness and inviscid limit.
\newblock \doi{https://doi.org/10.25560/115126}.
\newblock \urlprefix\url{http://hdl.handle.net/10044/1/115126}

\bibitem{goodair2024high}
Goodair, D.: High Order Smoothness for Stochastic Navier-Stokes Equations with Transport and Stretching Noise on Bounded Domains.
\newblock arXiv preprint arXiv:2408.13791  (2024)

\bibitem{goodair2024improved}
Goodair, D.: Improved Blow-Up Criterion in a Variational Framework for Nonlinear SPDEs.
\newblock arXiv preprint arXiv:2408.11678  (2024)

\bibitem{goodair2024weak}
Goodair, D.: Weak and strong solutions to nonlinear SPDEs with unbounded noise.
\newblock Nonlinear Differential Equations and Applications NoDEA \textbf{31}(6), 106 (2024)

\bibitem{goodair2025navier}
Goodair, D.: Navier-Stokes equations with Navier boundary conditions and stochastic lie transport: well-posedness and inviscid limit.
\newblock Journal of Differential Equations \textbf{429}, 1--49 (2025)

\bibitem{goodair20233d}
Goodair, D., Crisan, D.: On the 3d navier-stokes equations with stochastic lie transport.
\newblock in: Stochastic Transport in Upper Ocean Dynamics Annual Workshop, pp. 53--110. Springer Nature Switzerland Cham (2022)

\bibitem{goodair2023zero}
Goodair, D., Crisan, D.: The Zero Viscosity Limit of Stochastic Navier-Stokes Flows.
\newblock arXiv preprint arXiv:2305.18836  (2023)

\bibitem{goodair2024stochastic}
Goodair, D., Crisan, D.: Stochastic Calculus in Infinite Dimensions and SPDEs.
\newblock Springer Nature (2024)

\bibitem{goodair2022existence1}
Goodair, D., Crisan, D., Lang, O.: Existence and uniqueness of maximal solutions to SPDEs with applications to viscous fluid equations.
\newblock Stochastics and Partial Differential Equations: Analysis and Computations pp. 1--64 (2023)

\bibitem{gyongy1989approximation}
Gy{\"o}ngy, I.: On the approximation of stochastic partial differential equations ii.
\newblock Stochastics: An International Journal of Probability and Stochastic Processes \textbf{26}(3), 129--164 (1989)

\bibitem{gyongy2003splitting}
Gy{\"o}ngy, I., Krylov, N.: On the splitting-up method and stochastic partial differential equations.
\newblock The Annals of Probability \textbf{31}(2), 564--591 (2003)

\bibitem{gyongy1992stochastic}
Gy{\"o}ngy, I., Krylov, N.v.: Stochastic partial differential equations with unbounded coefficients and applications. III.
\newblock Stochastics: An International Journal of Probability and Stochastic Processes \textbf{40}(1-2), 77--115 (1992)

\bibitem{hofmanova2024global}
Hofmanov{\'a}, M., Lange, T., Pappalettera, U.: Global existence and non-uniqueness of 3D Euler equations perturbed by transport noise.
\newblock Probability Theory and Related Fields \textbf{188}(3), 1183--1255 (2024)

\bibitem{holm2015variational}
Holm, D.D.: Variational principles for stochastic fluid dynamics.
\newblock Proceedings of the Royal Society A: Mathematical, Physical and Engineering Sciences \textbf{471}(2176), 20140,963 (2015)

\bibitem{holm2021stochastic}
Holm, D.D., Hu, R.: Stochastic effects of waves on currents in the ocean mixed layer.
\newblock Journal of Mathematical Physics \textbf{62}(7), 073,102 (2021)

\bibitem{hu2025local}
Hu, R., Lin, Q., Liu, R.: On the local well-posedness of fractionally dissipated primitive equations with transport noise.
\newblock arXiv preprint arXiv:2501.09956  (2025)

\bibitem{kraichnan1968small}
Kraichnan, R.H.: Small-scale structure of a scalar field convected by turbulence.
\newblock The Physics of Fluids \textbf{11}(5), 945--953 (1968)

\bibitem{lang2023well}
Lang, O., Crisan, D.: Well-posedness for a stochastic 2D Euler equation with transport noise.
\newblock Stochastics and Partial Differential Equations: Analysis and Computations \textbf{11}(2), 433--480 (2023)

\bibitem{lototsky2017stochastic}
Lototsky, S.V., Rozovsky, B.L., et~al.: Stochastic partial differential equations.
\newblock Springer (2017)

\bibitem{majda2001mathematical}
Majda, A.J., Timofeyev, I., Vanden~Eijnden, E.: A mathematical framework for stochastic climate models.
\newblock Communications on Pure and Applied Mathematics: A Journal Issued by the Courant Institute of Mathematical Sciences \textbf{54}(8), 891--974 (2001)

\bibitem{memin2014fluid}
M{\'e}min, E.: Fluid flow dynamics under location uncertainty.
\newblock Geophysical \& Astrophysical Fluid Dynamics \textbf{108}(2), 119--146 (2014)

\bibitem{mikulevicius2001equations}
Mikulevicius, R., Rozovskii, B.: On equations of stochastic fluid mechanics.
\newblock in: Stochastics in Finite and Infinite Dimensions: In Honor of Gopinath Kallianpur, pp. 285--302. Springer (2001)

\bibitem{mikulevicius2004stochastic}
Mikulevicius, R., Rozovskii, B.L.: Stochastic Navier--Stokes equations for turbulent flows.
\newblock SIAM Journal on Mathematical Analysis \textbf{35}(5), 1250--1310 (2004)

\bibitem{prevot2007concise}
Pr{\'e}v{\^o}t, C., R{\"o}ckner, M.: A concise course on stochastic partial differential equations, \emph{Lecture Notes in Mathematics}, vol. 1905.
\newblock Springer, Berlin (2007)

\bibitem{resseguier2017geophysical}
Resseguier, V., M{\'e}min, E., Chapron, B.: Geophysical flows under location uncertainty, Part I Random transport and general models.
\newblock Geophysical \& Astrophysical Fluid Dynamics \textbf{111}(3), 149--176 (2017)

\bibitem{robinson2016three}
Robinson, J.C., Rodrigo, J.L., Sadowski, W.: The three-dimensional Navier--Stokes equations: Classical theory, vol. 157.
\newblock Cambridge university press (2016)

\bibitem{street2021semi}
Street, O.D., Crisan, D.: Semi-martingale driven variational principles.
\newblock Proceedings of the Royal Society A \textbf{477}(2247), 20200,957 (2021)

\bibitem{tang2023stochastic}
Tang, H.: On stochastic Euler-Poincar{\'e} equations driven by pseudo-differential/multiplicative noise.
\newblock Journal of Functional Analysis p. 110075 (2023)

\bibitem{tang2022general}
Tang, H., Wang, F.Y.: A general framework for solving singular SPDEs with applications to fluid models driven by pseudo-differential noise.
\newblock arXiv preprint arXiv:2208.08312  (2022)

\end{thebibliography}

\end{document}